\long\def\comment#1{}
\newtheorem{theorem}{Theorem}[section]
\newtheorem{prop}[theorem]{Proposition}
\newtheorem{cor}[theorem]{Corollary}
\newtheorem{lemma}[theorem]{Lemma}
\theoremstyle{definition}
\newtheorem{example}[theorem]{Example}
\numberwithin{equation}{section}
\def\Abs#1{\left|#1\right|}
\def\AND{,\,}
\def\Bb#1{\mathbb{#1}}
\def\Cbr#1{\left\{#1\right\}}
\def\cf#1{\mathbf{1}\{#1\}}
\def\chf{\varphi}
\def\Coms{\mathbb{C}}
\def\conv#1{\stackrel{#1}{\rightarrow}}
\def\cspace{\mathrm{csp}}
\def\DA{\mathcal{D}}
\def\dd{\mathrm{d}}
\def\diag{\mathrm{diag}}
\def\dpois{\mathrm{Poisson}}
\def\eno#1#2{#1_1, \ldots, #1_{#2}}
\def\exf#1{e^{#1}}
\def\exfi#1{\exf{\iunit{#1}}}
\def\Flr#1{\left\lfloor#1\right\rfloor}
\def\ft#1{\widehat{#1}}
\def\Grp#1{\left(#1\right)}
\def\gv{\,|\,}
\def\Id{\mathrm{Id}}
\def\intii{\int_{-\infty}^\infty}
\def\Ints{\mathbb{Z}}
\def\inum#1{#1_1, #1_2, \ldots}                 
\def\ip#1#2{\langle{#1},{#2}\rangle}
\def\iunit{\mathrm{i}}
\def\levy{L\'evy\xspace}
\def\lspan{\mathrm{span}}
\def\mean{\mathbb{E}}
\def\Nats{\mathbb{N}}
\def\nth#1{\frac{1}{#1}}
\def\pr#1{\mathbb{P}\{#1\}}
\def\Pr#1{\mathbb{P}\Cbr{#1}}
\def\Pseq#1#2{F^{*#1}(#2)}
\def\Rats{\mathbb{Q}}
\def\Reals{\mathbb{R}}
\def\RV{\mathcal{R}}
\def\rx{\epsilon}
\def\Sbr#1{\left[#1\right]}
\def\sign{\text{sgn}}
\def\Norm#1{\|{#1}\|}
\def\Sp#1{\sp{(#1)}}
\def\ssp{\Reals^d}
\def\sumoi#1{\sum_{#1=1}^\infty}
\def\th{\text{-th}}
\def\toi{\to\infty}
\def\Lsup{\varlimsup}
\def\Iff{\Longleftrightarrow}   
\def\Bing{bingham:89:cup}
\def\Doney{doney:97:ptrf}
\def\Gar{garsia:63:cmh}
\def\Petr{petrov:95:oxford}
\def\Rva{rvaceva:62:stmsp}
\def\Sato{sato:99:cup}
\def\Spit{spitzer:76:sv-nh}
\def\Stone{stone:67:bsmsp}
\def\Will{williamson:68:pjm}
\def\iid{i.i.d\@ifnextchar.{}{.\ }}
\setlist{label=({\arabic*}), parsep=.25ex, topsep=1ex, itemsep=0.15ex,
  leftmargin=0ex, itemindent=1.7\parindent}
\begin{document}
\begin{center}
  \textbf{\MakeUppercase{
      On Multivariate Strong Renewal Theorem
    }
  } \\[1ex]
  \normalsize
  Zhiyi Chi\\
  Department of Statistics,
  University of Connecticut \\
  Storrs, CT 06269, USA.
  E-mail: zhiyi.chi@uconn.edu\\[.8ex]
  \today
\end{center}
  
\begin{abstract}
  This paper takes the so-called probabilistic approach to the Strong
  Renewal Theorem (SRT) for multivariate distributions in the domain of
  attraction of a stable law.  A version of the SRT is obtained that
  allows any kind of lattice-nonlattice composition of a distribution.  
  A general bound is derived to control the so-called ``small-$n$
  contribution'', which arises from random walk paths that have a
  relatively small number of steps but make large cumulative moves.
  The asymptotic negligibility of the small-$n$ contribution is 
  essential to the SRT.  Applications of the SRT are given, including
  some that provide a unified treatment to known results but with
  substantially weaker assumptions.

  \medbreak\noindent
  \emph{Keywords and phrases.} Renewal, regular variation,
  infinitely divisible, large deviations.
  
  \medbreak\noindent
  \emph{\it 2010 Mathematics Subject Classification.} 60K05,
  60F10.
\end{abstract}

\section{Introduction} \label{s:intro}
For a probability distribution $F$ on $\ssp$, the Strong Renewal
Theorem (SRT) is said to hold if
\begin{align} \label{e:SRT}
  \frac{|x|^d}{A(|x|)} U(x+E)
  \to
  g(x/|x|) u(E)
\end{align}
uniformly in a certain sense as $|x|\toi$, where $U = \sumoi n F^{*n}$
is the renewal measure with $F^{*n}$ the $n$-fold convolution of $F$,
$A(\cdot)>0$ is a function on $(0,\infty)$, $E$ is some ``nice'' set,
$x + E$ denotes $\{x + y: y\in E\}$, $g(\cdot)\not\equiv 0$ is a
function on $\{\omega\in\ssp: |\omega| =1\}$, and finally, $u$ is a
nonzero $\sigma$-finite measure on $\ssp$; see Theorem \ref{t:SRT} for
precise explanation.  The definition extends the one in \cite {\Will}
that only considers $F$ on $\Ints^d$.  In \cite{\Will}, $x$ stays in
$\Ints^d$ and $E = \{0\}$.  However, in general, $x$ can take any
value in $\Reals^d$ and $E$ has to depend on the lattice-nonlattice
composition of $F$.

There are two main approaches to the SRT.  One is based on Fourier
analysis of the renewal measure \cite {erickson:70:tams,
  erickson:71:bams, \Gar, doney:66:plms, uchiyama:98:plms, \Will}.
The other is the so-called probabilistic approach \cite{\Spit, \Doney,
  vatutin:13:tpa, chi:15:aap, chi:14:tr, \Will, doney:15:arxiv,
  caravenna:15:arxiv}.  It is based on the realization that the two
partial sums that comprise the renewal measure,
\begin{align} \label{e:big-small-n}
  \sum_{n\ge A(\delta |x|)} F^{*n}(x+E)
  \qquad\text{and}\quad
  \sum_{n<A(\delta |x|)} F^{*n}(x+E)
\end{align}
with $\delta>0$ an arbitrary fixed number, are essentially different
and hence should be tackled in different ways.  The partial sums in
\eqref{e:big-small-n} will be referred to as the ``big-$n$'' and
``small-$n$'' contributions, respectively.  In general, the big-$n$
contribution can be dealt with using Local Limit Theorems (LLTs),
essentially yielding the limit \eqref{e:SRT} provided it exists \cite
{\Spit, \Gar, erickson:70:tams, chi:15:aap,  \Will}.  In contrast,
without additional conditions, the small-$n$ contribution often fails
to converge, hence ruling out the existence of the limit \cite {\Gar,
  \Will, vatutin:13:tpa}.  Recently, to control the small-$n$
contribution when $d=1$, integral criteria were proposed \cite
{chi:15:aap, chi:14:tr}.  This paper extends the idea in \cite
{chi:15:aap, chi:14:tr} to the multivariate case.  As in previous
works, it investigates the SRT for $F$ in the domain of attraction
{\em without centering\/} of a {\em  nondegenerate\/} stable law.  By
definition, there are $a_n\in\Reals$ such that $F^{*n}(a_n \dd x)$
weakly converges to an $\alpha$-stable law not concentrated in any
linear manifold of dimension $d-1$.  Denote this by $F\in
\DA_0(\alpha)$.  To establish the SRT for $F$, the small-$n$
contribution is approached by analyzing various subsets of random walk 
paths, in particular components of the paths at different scales.  In
addition to being quite easily applicable, the resulting SRT gives a
unified  treatment to many known results, sometimes with substantially
weaker assumptions.

It should be remarked that for $d>1$, a more general type of stability
can be defined, namely operator-stability (cf.~\cite
{sharpe:69:tams}).  Characterizations of domain of attraction for 
operator-stability as well as the corresponding LLTs are known (cf.\
\cite{resnick:79:jma, hudson:83:ap, hahn:85:zw, griffin:86:ap,
  doney:91:jma}).  Since operator-stability has found applications,
e.g., in the study on the ladder height and ladder epoch of random
walks in $\Reals$ \cite{doney:93:ptrf, greenwood:82b:zw}, it is of
interest to consider the related SRT.  This topic is beyond the scope
of the paper.

During the revision of the paper, sufficient and necessary conditions
for the SRT in the univariate case were announced \cite
{caravenna:16:arxiv}.  The key to the new result is a new local large
deviation bound for $F^{*n}$.  An extension to the multivariate case
will be interesting in future work.

Section \ref{s:main} presents the main result of the paper, which is a
multivariate SRT in Theorem \ref{t:SRT}.  The SRT is preceded by a
result on the lattice-nonlattice composition of a distribution, which
is an issue unique to the multivariate case and has to addressed in
order to formulate the SRT properly.  Applications of the SRT are also
presented in the section.  The proof of Theorem \ref{t:SRT} is
outlined in Section \ref{s:outline}.  It is shown in this section that
the theorem is a consequence of Theorems \ref {t:big-n} and \ref
{t:small-n} that deal with the big-$n$ and small-$n$ contributions,
respectively.  As a preparation for their proofs, Section \ref
{s:bounds} derives bounds for the \levy concentration and local large
deviation of $F^{*n}$.  Then Theorems \ref{t:big-n} and
\ref{t:small-n} are proved in Sections \ref {s:big-n}--\ref
{s:small-n}, respectively. Section \ref{s:proof-props} collects proofs
of minor results on the SRT.  The lattice-nonlattice composition is
proved in Appendix \ref{s:lattice}.

The rest of this section fixes notation.  For $a$, $b\in\Reals$, denote
$a\vee b = \max(a,b)$, $a\wedge b = \min(a, b)$, and $a_+ = a\vee 0$.
For $x\in\ssp$, denote by $|x|$ its Euclidean norm and $\Norm x =
\max_i |t_i|$ its sup-norm, where $t_i$ are the coordinates of $x$.
Denote $B_d = \{x\in\ssp: |x| \le 1\}$, $S^{d-1} = \{x\in\ssp:
|x|=1\}$, and $I_d = [0,1)^d$.  For $\Lambda\subset \Reals$ and $D$,
$E\subset \ssp$, denote $a D = \{a y: y\in D\}$, $\Lambda x =
\{\lambda x: \lambda \in \Lambda\}$, $x+D = \{x+y: y\in D\}$, and
$D+E=\{y+z: y\in D, z\in E\}$.  Denote $M\in \Lambda^{m\times d}$ if
$M$ is an $m\times d$ matrix of elements in $\Lambda$, and $M D = \{M
y: y\in D\}$.  Denote by $\diag(\eno a n)$ the diagonal matrix with
the $i\th$ diagonal element being $a_i$, and $\Id_n$ the $n\times n$
identity matrix.  For a linear subspace $V$ of $\ssp$, denote by
$\pi_V$ the projection onto $V$.    If $f\in L^1(\ssp)$, denote
$\ft f(t) = \int\exfi{\ip t x} f(x)\, \dd x$.

For functions $f$ and $g$, $f(x)=O(g(x))$, $f(x)\ll g(x)$, and
$g(x)\gg f(x)$ all mean $|f(x)| \le C |g(x)|$ for some constant $C>0$,
and $f(x)\asymp g(x)$ means $g(x)\ll f(x)\ll g(x)$.  If $C$ depends on
parameters $\eno a k$, when it is necessary to emphasize the
dependence, denote $f(x)=O_{\eno a k}(g(x))$, $f(x)
\ll_{\eno a n} g(x)$, or $g(x)\gg_{\eno a n} f(x)$.  By $f(x) =
o_{\eno a k}(g(x))$ as $x\toi$ it means there is a function $M(\rx) =
M(\rx; \eno a k)$, such that $|f(x)|\le \rx |g(x)|$ for all $x\ge
M(\rx)$.

\section{Main results}  \label{s:main}
\subsection{Lattice-nonlattice composition of distribution}
It is well known that the SRT, in particular, the big-$n$ contribution
involved, has to be handled differently for lattice distributions and
nonlattice ones \cite {gnedenko:54:wzh, \Rva, stone:65:ams, \Stone,
  \Bing}.  Recall that if a distribution is concentrated on $a+\Gamma$
for some $a\in \ssp$ and lattice $\Gamma\subset\ssp$, then the
distribution as well as any random variable following it is said to be
lattice.  By definition, $\Gamma$ is an additive subgroup of $\ssp$
with no cluster points.  For $d>1$, a complication is that a
distribution may be jointly lattice and nonlattice, so it is necessary
to first know its lattice-nonlattice composition in order to establish
the SRT.  The lattice-nonlattice composition of a nondegenerate
distribution is characterized by the next result that will be proved
in Appendix \ref{s:lattice}.  Recall that two integers are coprime if
their greatest common divisor is 1, and $\eno a n\in\Reals$ are
rationally independent if for $\eno m n\in\Ints$, $\sum m_i a_i
\in\Ints\Iff$ all $m_i=0$ (\cite {petersen:83:cup}, p.~51).

\begin{prop} \label{p:lattice-decomp}
  Let $X$ be a nondegenerate random variable in $\ssp$.  Denote by 
  $\chf_X(u) = \mean[\exfi{\ip u X}]$ its characteristic function.
  Then there exist a linear subspace $V\subset\ssp$, a nonsingular
  matrix $T\in\Reals^{d \times d}$, and integers $0\le \nu\le r\le
  d$ and $q\ge 1$ with the following properties.
  \begin{enumerate}
  \item $\pi_V(X)$ is lattice, and $|\chf_X(2\pi v)|<1$ for $v\in\ssp
    \setminus V$.
  \item $|\chf_{TX}(2\pi u)|=1 \Iff u\in \Ints^r \times \{0\}$, where
    $0$ is the zero vector in $\Ints^{d-r}$.
  \item Let $TX = (Y, Z)$ with $Y\in\Reals^r$ and $Z\in\Reals^{d-r}$.
    Then $\pr{Y\in\beta + \Ints^r} = 1$ for $\beta = (0, \ldots, 0,
    \beta_\nu, \ldots, \beta_r)$,  where $\beta_\nu = p/q\in\Rats$
    with $0\le p<q$ being coprime and $\beta_{\nu+1}$, \ldots,
    $\beta_r\in (0,1)\setminus \Rats$ are rationally independent.
  \end{enumerate}
  Furthermore, $V$,   $r$, $\nu$, and $q$ with above properties are
  unique, and $r=\dim(V)$.
\end{prop}

\begin{proof}[Remark]\
  \begin{enumerate}
  \item In the decomposition, $\beta_\nu=0\Iff p=0$ and $q=1$.
    
  \item It was claimed in \cite{\Stone} that according to p.~64--75 of
    \cite {\Spit}, $X$ can always be linearly transformed into a
    nondegenerate $\xi\in\ssp$, such that for some $\beta\in\Reals^r$,
    $\chf_\xi(2\pi u) = \exp[2\pi\iunit \ip \beta v]$ if $u = (v, 0)
    \in \Ints^r\times \{0\}$ and $|\chf_\xi(2\pi u)|<1$ otherwise.
    However, $X$ is assumed to be $\Ints^d$-valued in \cite{\Spit}, so
    it is unclear how the claim was obtained.  Moreover, the SRT
    requires detailed information about $\beta$, so the claimed
    transformation is insufficient.  \qedhere
  \end{enumerate}
\end{proof}

Denote 
\begin{align} \label{e:lattice-l-w}
  l_0 = (0, \ldots, 0, \beta_\nu), \quad
  w_0 = (\beta_{\nu+1}, \ldots, \beta_r).
\end{align}
By Proposition \ref{p:lattice-decomp}, if $Y$ is partitioned as $(L,
W) \in \Reals^\nu \times \Reals^{r-\nu}$ so that
\begin{align} \label{e:normalization}
  TX = (Y, Z) = (L,W,Z),
\end{align}
then $(L, W)$ and $Z$ are the lattice and nonlattice components of
$X$, respectively.  Meanwhile, $L$ and $(W, Z)$ are the arithmetic and
nonarithmetic components, respectively.  The dimensions of the
components are unique, and the number $q\in\Nats$ such that  $D L$ is
$\Ints^\nu$-valued is unique, where
\begin{align} \label{e:cell0}
  D = \diag(1,\ldots, 1, q)\in \Ints^{\nu\times\nu}.
\end{align}
However, Proposition \ref{p:lattice-decomp} does not say $\beta_\nu$,
\ldots, $\beta_r$ are unique.

The SRT for an aperiodic random walk is studied in \cite{\Will}.  A
random variable $\xi\in \Ints^\nu$ is said to be aperiodic if for any
nonrandom $t\in \Reals^\nu$, $\pr{\ip t\xi\in \Ints}=1 \Iff t\in
\Ints^\nu$, and is said to be strongly aperiodic if for any nonrandom 
$t\in \Reals^\nu$ and $c\in\Reals$, $\pr{\ip t\xi \in c + \Ints}=1\Iff
t\in\Ints^\nu$ and $c\in\Ints$ (cf.\ \cite{\Spit}, T7.1, P7.8).  The
following result will be proved in Appendix \ref {s:lattice}.  Recall
that a matrix $K\in \Ints^{\nu  \times \nu}$ has an inverse in
$\Ints^{\nu\times \nu}\Iff |\det K|=1$, in which case $K^{-1}$ is
$\det K$ times the adjugate of $K$.
\begin{prop} \label{p:aperiod}
  \
  \begin{enumerate}
  \item Let $\xi\in\Ints^\nu$ be nondegenerate.  Then $\xi$ is
    aperiodic $\Iff$ there are $K\in\Ints^{\nu\times\nu}$ with $|\det
    K|=1$ and coprime integers $0\le p<q$, such that $\xi
    =K^{-1}(D\zeta + p e_\nu)$, where $\zeta$ is strongly aperiodic,
    $D$ is defined in \eqref{e:cell0}, and $e_\nu = (0,\ldots,0,1)$ is
    the $\nu\th$ standard base vector of $\Ints^\nu$.

  \item For the $L$ and $D$ in \eqref{e:normalization}--\eqref
    {e:cell0}, $L - \beta_\nu e_\nu$ is strongly aperiodic and $D L$
    is aperiodic.
  \end{enumerate}
\end{prop}

\subsection{A sufficient condition for the SRT}
The lattice-nonlattice composition in Proposition \ref
{p:lattice-decomp} allows the SRT to be formulated properly.  The 
next SRT is the main result.  It also implies certain property of
the limiting law involved.  Recall that a nondegenerate stable law has
an infinitely differentiable density with all derivatives vanishing at
$\infty$ (\cite{\Sato}, Example 28.2).

\begin{theorem}[SRT] \label{t:SRT}
  Let $F\in \DA_0(\alpha)$ be a distribution on $\ssp$ with
  $0<\alpha\vee 1<d$.  Let $\psi$ be the density of the limiting
  stable law of $F^{*n}(a_n\dd x)$, where $a_n>0$ is a sequence of
  norming constants.  Let $A(s)$ be any function regularly varying at
  $\infty$ such that $A(a_n)/n\to 1$ as $n\toi$.  Define
  \begin{align} \label{e:partial-radial}
    \varrho_s(\omega)
    =
    \alpha q^{-1} \int_0^{1/s} \psi(u\omega) u^{d-\alpha-1}\,\dd u,
    \quad \omega\in S^{d-1},\ s>0.
  \end{align}
  Let $T\in \Reals^{d\times d}$ be nonsingular such that $T X = (L, W,
  Z)$ as in \eqref{e:normalization}.  Fixing $\Upsilon\in \Ints^{\nu
    \times \nu}$ with $|\det\Upsilon| = 1$, define
  \begin{align} \label{e:cell}
    \Delta_h = (D^{-1} \Upsilon I_\nu)\times (h I_{d-\nu}), \quad h>0,
  \end{align}
  where $D$ is given in \eqref{e:cell0}.   Define
  \begin{align} \label{e:integral-K}
    K(t,a,\eta,h) = 
    \int_{|z|<\eta |t|}
    F(t - z + h I_d) \exf{-|z|/a}\, \dd z
  \end{align}
  for $t\in\ssp$, $a>0$, $r>0$, and $h>0$.  Define
  \begin{align} \label{e:kappa}
    \kappa = \Flr{d/\alpha}.
  \end{align}
  Then, if there are $\theta\in (0, 1/\kappa)$ and $\eta>0$ such that
  \begin{align} \label{e:SRT-tail}
    \lim_{\delta\to 0} \Lsup_{s\toi} 
    \frac{s^d}{A(s)}\sum_{n\le A(\delta s)} 
    n a^{-d}_n\sup_{|t|>\theta s} K(t, a_n, \eta, h)
    = 0,
  \end{align}
  then the following convergence holds
  \begin{gather} \label{e:uniform-SRT}
    \lim_{s\toi} \sup_{\omega\in S^{d-1}}
    \Abs{
      \frac{s^d}{A(s)} U(s\omega + T^{-1} \Delta_h)
      -
      \frac{h^{d-\nu}\varrho_0(\omega)}{|\det T|}
    }
    = 0
  \end{gather}
  and moreover,
  \begin{gather} \label{e:uniform-SRT2}
    \lim_{\delta\to 0}\sup_{\omega\in S^{d-1}}
    |\varrho_0(\omega) -\varrho_\delta(\omega)| = 0.
  \end{gather}
\end{theorem}

\begin{proof}[Remark]
  Eq.~\eqref{e:uniform-SRT} makes clear what the uniform convergence
  in \eqref{e:SRT} means and will be referred to as the SRT.  For
  $\alpha=2$, since $\psi$ is a normal density, the uniform
  convergence in \eqref {e:uniform-SRT2} holds without assuming
  \eqref{e:SRT-tail}.  However, for $\alpha\in (0,2)$ and $d>1$, it
  may fail to hold.  Indeed, following upon Example 5-B of
  \cite{\Will}, given $1\le k<d$, if $\psi$ is the density of $(\eno
  \xi d)$, where $\xi_i\in \Reals$ are \iid symmetric $\alpha$-stable
  with $\alpha \le (d-k)/(k+1)$ and density $g$, then for any
  $\omega\in S^{d-1}$ with at most $k$ nonzero coordinates,
  $\psi(s\omega) s^{d-\alpha -1}  = g(\omega_1 s) \cdots g(\omega_d s)
  s^{d-\alpha-1}\gg s^{d-(k+1)(1+\alpha)} \gg s^{-1}$ as $s\toi$,
  giving $\rho_0(\omega)=\infty$.
\end{proof}

The condition \eqref{e:SRT-tail} is often easy to check.  From Theorem
\ref{t:SRT}, the following SRT follows.  When $F$ is concentrated on
$\Ints^d$, the same result was established in \cite{\Will} but with a
very different argument.  Unlike \cite{\Will}, the proof given in
Section \ref{s:proof-props} applies to $F$ with any lattice-nonlattice
composition.
\begin{theorem} \label{t:w-SRT}
  If $1\le d/2<\alpha\in (0,2)$ (so $d=2$ or 3), then the SRT \eqref
  {e:uniform-SRT} holds for any $F\in \DA_0(\alpha)$ and \eqref
  {e:uniform-SRT2} holds for any nondegenerate $\alpha$-stable law on
  $\ssp$.
\end{theorem}

The above result does not cover $\alpha=2$.  For this case, the next
result provides weaker conditions than \cite{\Spit}, P26.1, and \cite
{uchiyama:98:plms}.
\begin{prop} \label{p:normal}
  Let $F\in \DA_0(2)$ and $X\sim F$.  Denote $q_X(s) = \pr{|X|>s}$.
  Then the SRT \eqref{e:uniform-SRT} holds for $F$ in each of the
  following cases.
  \begin{enumerate}
  \item $d=3$.
  \item $d=4$ and
    \begin{align*}
      q_X(s) \int_1^s u^{-5} A(u)^2 \,\dd u = o(A(s)/s^4), \quad
      s\toi.
    \end{align*}
  \item $d\ge 5$ and $q_X(s) = o(s^{2-d})$.
  \end{enumerate}
\end{prop}
For $d=3$, the SRT for $X\sim F\in\DA_0(2)$ is established in \cite
{\Spit}, P26.1, under the condition $\sigma^2 = \mean |X|^2 < \infty$.
For $d=4$, the SRT is established in \cite{uchiyama:98:plms} under the
condition $\mean |X|^2(\ln |X|)_+<\infty$, which implies $\sigma^2 <
\infty$.  However, if $\sigma^2<\infty$, then by the Central
Limit Theorem, $A(s)/s^2\asymp 1$ and the display in (2) reads
$q_X(s) = o(1/(s^2 \ln s))$, which is a weaker condition.  In Example
\ref {ex:normal} below, it is shown that even $\sigma^2<\infty$ is not
necessary.  Finally, for $d\ge 5$, the SRT is established in \cite
{uchiyama:98:plms} under the condition $\mean|X|^{d-2} < \infty$.
Clearly, the condition in (3) is weaker.  In Example \ref{ex:normal2},
it will be seen that the condition and even $\sigma^2<\infty$ is not
necessary.

\begin{example} \label{ex:normal}
  Let $X\in\Reals^4$ be spherically symmetric with $q_X(s) \asymp
  1/(s^2 \ln s)$.  Put $V_X(s) = \mean[|X|^2 \cf{|X|\le s}]$.  By
  $V_X(s) \asymp \int_1^s u q_X(u) \,\dd u \asymp \int_1^s (u\ln
  u)^{-1} \,\dd u \asymp \ln \ln s$, $\mean |X|^2 = \infty$.  On the
  other hand, since $A(s) \sim s^2/V_X(s) \asymp s^2/\ln \ln s$, then
  $q_X(s) = o(1/A(s))$ and so $X\in \DA_0(2)$ (cf.\ \eqref
  {e:prelim-A} and \cite{\Rva}, Th.~4.1).  Meanwhile, $\int_1^s u^{-5}
  A(u)^2\,\dd u \asymp \ln s/(\ln \ln s)^2$.   As a result, the
  condition in Proposition \ref{p:normal}(2) is satisfied and the SRT
  holds.  \qed
\end{example}

Next consider a multivariate version of a result in \cite{chi:15:aap,
  chi:14:tr}.  Define
\begin{align*}
  \phi(x) = |x|^d F(x+ h I_d) A(|x|).
\end{align*}
The classical condition $\sup\phi<\infty$ for $d=1$ played a
critical role in several works \cite{\Will, \Doney, vatutin:13:tpa}.
\begin{theorem} \label{t:bounded-ratio}
  Let $\alpha\in (0,2]\cap (0, d/2)$.  Suppose there are $T\ge 0$
  and $\eta>0$ such that
  \begin{align}  \label{e:bounded-ratio}
    \sup_\omega \int_{|z|<\eta s} [\phi(s\omega-z)-T]_+\,\dd z =
    o(A(s)^2), \quad s\toi,
  \end{align}
  then for any $\theta>0$ and $\delta>0$,
  \begin{align}\label{e:bounded-ratio-K}
    \sum_{n\le A(\delta s)} n a^{-d}_n
    \sup_{|t|\ge \theta s} K(t, a_n, \eta, h)
    =
    [o(1) + \delta^{2\alpha}]\frac{A(s)}{s^d}, \quad s\toi
  \end{align}
  and consequently the SRT \eqref{e:uniform-SRT} holds for $F$ and
  \eqref{e:uniform-SRT2} for the limiting stable law of $F^{*n}(a_n
  \dd x)$.
\end{theorem}

\begin{example} \label{ex:normal2}
  As an application of Theorem \ref{t:bounded-ratio}, it can be
  shown that for $d\ge 5$, the condition in Proposition
  \ref{p:normal}(3), i.e., $q_X(s) = o(s^{2-d})$, is not necessary for
  the SRT.  Indeed even $\mean |X|^2 < \infty$ is not necessary.  Let
  $X$ have density $f(x)= c(1+|x|)^{-d-2}$, where $c>0$ is a constant.
  Put $V_X(s) = \mean[|X|^2 \cf{|X|\le s}]$.  Then $V_X(s) =
  \int_{|x|\le s} |x|^2 f(x)\,\dd x \asymp \ln s$ as $s\toi$, so
  $\mean |X|^2=\infty$.  However, by $s^2 q_X(s) = s^2 \int_{|x|\ge s}
  f(x)\,\dd x = O(1)$, the law of $X$ is in $\DA_0(2)$ (\cite{\Rva},
  Th.~4.1).  Moreover, since $A(s) \sim s^2/V_X(s)$ (cf.\
  \eqref{e:prelim-A}), $\phi(x) \asymp_h |x|^d f(x) A(|x|) $ is
  bounded.  Then by Theorem \ref {t:bounded-ratio}, the SRT holds.
  \qed
\end{example}

\begin{example}  \label{ex:w-5B}
  Let $\xi\in\Reals$ be symmetric $\alpha$-stable with $\alpha\in
  (0,2)$ and $X=(\eno X d)$ with $X_i$ \iid $\sim \xi$.  From the
  remark for Theorem \ref{t:SRT}, if $\alpha \le (d-1)/2$, then
  $\varrho_0(e_i)=\infty$.  Therefore, both \eqref{e:uniform-SRT} and 
  \eqref{e:uniform-SRT2} fail to hold.  The goal here is to show that
  if $\alpha> (d - 1)/2$ (so $d\le 4$), then both hold.  Fix
  $\theta>0$ and $0< \eta < 1/(10 d)$.  Given $t = (\eno t d)$, put
  $x = t_i$, where $|t_i| = \max |t_j|$.  Then $|x|\ge |t|/d$ and so
  \begin{align}
    K(t,a, \eta, h)
    &=
    \int_{|z|<\eta|t|}
    \prod_{j=1}^d \pr{X_i\in t_j -z_j + h I_1}
    \exf{-|z|/a}\,\dd z
    \nonumber\\
    &\le
    \int_{|z_i|<\eta d |x|}
    \prod_{j=1}^d \pr{X_i\in t_i -z_i + h I_1}
    \exf{-|z_i|/a}\,\dd z
    \nonumber\\
    &=
    h^{d-1}
    \int_{|u|<\eta d |x|}
    \pr{\xi\in x -u + h I_1} \exf{-|u|/a}\,\dd u
    \label{e:K-ind-comp}
  \end{align}
  For $x\in\Reals$, $\pr{\xi \in x + h I_1} \ll_h |x|^{-\alpha-1}$.
  On the other hand, for $|t|\gg 1$, if $|x|\ge |t|/d$ and $|u| \le
  \eta d|x| < |x|/10$, then $|x - u| \asymp |t|$.  Then
  \begin{align*}
    K(t, a, \eta, h) \ll_h |t|^{-\alpha-1} \intii \exf{-|u|/a} \dd
    u \ll a |t|^{-\alpha-1}.
  \end{align*}
  Since $a_n \sim n^{1/\alpha}$ and $A(t) \sim t^\alpha$,
  \begin{align*}
    \sum_{n\le A(\delta s)} n a^{-d}_n
    \sup_{|t|>\theta s} K(t,a_n, \eta, h)
    \ll_h s^{-\alpha-1} \sum_{n\le A(\delta s)} n^{1-(d-1)/\alpha}
    \ll \delta^{2\alpha-d+1} A(s)/s^d.
  \end{align*}
  Then by Theorem \ref{t:SRT}, the SRT holds for $X$.  \qed
\end{example}

As seen earlier, for a strictly $\alpha$-stable distribution $G$ on
$\ssp$ with $\alpha\in (0,2)$, the uniform convergence of
$\varrho_\delta$ to $\varrho_0$ may fail to hold if $d>1$, where
$\varrho_\delta$ is defined in \eqref {e:partial-radial}.  As an
application of Theorem \ref{t:SRT}, a sufficient condition for the
uniform convergence will be provided next.  Let $\psi$ be the density
of $G$.  By Theorem 14.10 in \cite{\Sato},
\begin{align*}
  \ft\psi(t) =
  \exp[-C\mean f_\alpha(\ip\xi t) + \iunit \ip \tau t \cf{\alpha=1}],
  \quad t\in \ssp,
\end{align*}
where $C>0$ and $\tau\in\ssp$ are constants, $\xi\in S^{d-1}$ with
$\mean\xi=0$ if $\alpha=1$, and for $\theta\in \Reals$,
$f_\alpha(\theta) =|\theta|^\alpha [1-\iunit 
\tan(\pi\alpha/2)\sign(\theta) \cf{\alpha\ne  1} +
\iunit(2/\pi)\sign(\theta) \ln|\theta| \cf{\alpha=1}]$.  Conversely,
for any $C>0$, $\tau\in\ssp$, and $\xi$ on $S^{d-1}$, provided
$\mean\xi=0$ if $\alpha=1$, the RHS is the characteristic function of
a nonconstant strictly stable distribution.

\begin{prop} \label{p:radial}
  If $\xi$ has a bounded density with respect to the spherical measure
  on $S^{d-1}$, then \eqref{e:uniform-SRT2} holds for $G$, i.e., $\sup
  |\varrho_\delta - \varrho_0|\to 0$ as $\delta \to 0$.
\end{prop}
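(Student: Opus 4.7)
The plan is to reduce the uniform convergence to a uniform tail estimate on $\psi$. Directly from \eqref{e:partial-radial},
\begin{align*}
  \varrho_0(\omega) - \varrho_\delta(\omega) = \alpha q^{-1}\int_{1/\delta}^\infty \psi(u\omega) u^{d-\alpha-1}\,\dd u,
\end{align*}
so the claim is equivalent to showing that this nonnegative tail integral tends to $0$ uniformly in $\omega\in S^{d-1}$ as $\delta\to 0$. I would aim to prove the uniform polynomial tail bound $\sup_{\omega\in S^{d-1}}\psi(u\omega)\ll u^{-d-\alpha}$ for $u\ge 1$, after which the tail integral is majorized by $\int_{1/\delta}^\infty u^{-2\alpha-1}\,\dd u\asymp \delta^{2\alpha}$ uniformly in $\omega$, settling the proposition.

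The assumption on $\xi$ enters precisely at the tail bound step: a bounded density of $\xi$ with respect to the spherical measure on $S^{d-1}$ is equivalent to the L\'evy measure of $G$ having, in polar coordinates, the form $c_\alpha r^{-1-\alpha}\,\dd r\cdot f_\xi(\omega)\,\dd\sigma(\omega)$ with $f_\xi\in L^\infty(S^{d-1})$. Under this hypothesis, I would derive the uniform tail bound either by invoking the multivariate analogue of the big-jump asymptotic---namely $r^{d+\alpha}\psi(r\omega)\to c_{\alpha,d}\, f_\xi(\omega)$ as $r\to\infty$, uniformly in $\omega$ when $f_\xi$ is bounded---or by a direct Fourier argument. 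For the latter, the exponent $g(t):=C\mean f_\alpha(\ip\xi t)$ (plus the linear term $\iunit\ip\tau t$ when $\alpha=1$) is positively $\alpha$-homogeneous, and averaging against the bounded density $f_\xi$ over $S^{d-1}$ regularizes it off the origin with derivatives of each order controlled by $\|f_\xi\|_\infty$. Substituting $t=s/r$ in the Fourier inversion formula and integrating by parts in $s$ enough times then peels off the factor $r^{-d-\alpha}$ with a remainder bounded uniformly in $\omega$; boundedness of $\psi$ on compact sets (\cite{sato:99}, Example 28.2) extends the bound down to $u=1$.

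The main obstacle is precisely the uniformity in $\omega$ of this tail bound. Pointwise decay $\psi(r\omega)=O(r^{-d-\alpha})$ is immediate from smoothness of $\ft\psi$ together with Fourier inversion, but for each direction the implicit constant a priori depends on how singular $g$ is near the hyperplane $\{\ip\xi t=0\}$. The bounded density of $\xi$ smooths out this singularity uniformly across directions, but making this quantitative---so that the resulting constant depends only on $\|f_\xi\|_\infty$, $\alpha$, and $d$---is the technical heart of the argument. Once the tail bound is secured, the remainder is the one-line estimate displayed above.
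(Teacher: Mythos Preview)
Your reduction to the uniform tail bound $\sup_{\omega}\psi(u\omega)\ll u^{-d-\alpha}$ is sound, and once that bound is available the conclusion is indeed one line. However, your route to the bound is sketched rather than proved, and one remark is off: pointwise decay of $\psi$ is \emph{not} ``immediate from smoothness of $\ft\psi$''. Smoothness and rapid decay of $\ft\psi$ give smoothness of $\psi$; the polynomial tail of $\psi$ is governed by the \emph{singularity} of $\ft\psi$ (equivalently of $g$) at the origin. Your integration-by-parts scheme must therefore confront exactly that singularity, and after the substitution $t=s/r$ the expansion $e^{-r^{-\alpha}g(s)}=1-r^{-\alpha}g(s)+\cdots$ produces a formally divergent integral $\int e^{-i\ip s\omega}g(s)\,\dd s$ that has to be interpreted distributionally. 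This can be carried out, but it is the technical heart of the stable-density asymptotics literature, not a routine step; and the number of classical derivatives of $g$ supplied by a merely bounded $\lambda$ is limited (roughly $k<\alpha+d-1$), so naive integration by parts will not reach order $d+\alpha$.

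The paper takes a completely different and, within its own framework, more economical route: it never estimates $\psi$ directly. Instead it builds a concrete $X=Y\xi$ with $Y$ one-dimensional strictly $\alpha$-stable independent of $\xi$, checks that $X\in\Cal D'_0(\alpha)$ with limiting law $G$ (up to a shift when $\alpha=1$), and computes the density of $X$ in polar coordinates as $h(r\omega)\asymp r^{1-d}[g(r)\lambda(\omega)+g(-r)\lambda(-\omega)]$. Since the one-dimensional stable density satisfies $g(r)\ll r^{-1-\alpha}$ and $\lambda$ is bounded, the quantity $\omega(x)=|x|^d\,h(x)\,A(|x|)$ is bounded, so Proposition~\ref{p:bounded-ratio} applies and yields \eqref{e:uniform-SRT2} as a by-product. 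Thus the paper leverages its own SRT machinery (Theorem~\ref{t:small-n}, Corollary~\ref{c:big-n}, Proposition~\ref{p:bounded-ratio}) rather than stable-density asymptotics. Your approach, if completed, would be more direct and independent of that machinery, but would essentially reprove a nontrivial piece of multivariate stable theory; the paper's approach is self-contained given what it has already built.
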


Finally, it is of interest to infer properties of an ID distribution
from its \levy measure (cf.\ \cite{\Bing}, 8.2.7;
\cite{feller:71:jws}, XVII.4; \cite {embrechts:81:ap}).  This is the
motivation of the following result.
\begin{prop} \label{p:bounded-ratio-ind}
  Let $\alpha\in (0,2)$ and $F\in\DA_0(\alpha)$ be ID with \levy
  measure $\nu$.  Define $A_\nu(s) = 1/\nu(\ssp\setminus s B_d)$ and
  $\phi_\nu(x) = |x|^d \nu(x+ h I_d) A_\nu(|x|)$.  If condition \eqref
  {e:bounded-ratio} is satisfied with $\phi$ and $A$ replaced with
  $\phi_\nu$ and $A_\nu$, respectively, then the SRT holds for $F$.
\end{prop}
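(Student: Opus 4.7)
The plan is to deduce the hypothesis of Proposition \ref{p:bounded-ratio} for $F$ from the bounded-ratio condition assumed on $\omega_\nu$, and then invoke that proposition. Two ingredients are required: the norming functions satisfy $A_\nu\asymp A$, and the local tail $F(x+h I_d)$ is dominated by $\nu$ on a slightly enlarged box, up to an exponentially small remainder.

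For the first ingredient, since $F\in\Cal D'_0(\alpha)$ with $\alpha\in(0,2)$ is ID with \levy measure $\nu$ having regularly varying tail of index $-\alpha$, the classical Embrechts--Goldie-type tail equivalence for ID distributions (cf.\ \cite{embrechts:81:ap}) gives $q_X(s)\asymp\nu(\ssp\setminus s B_d) = 1/A_\nu(s)$ as $s\toi$, hence $A\asymp A_\nu$.

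For the second ingredient, I would decompose $X=Y+W$ by truncating $\nu$ at some fixed $R>0$: $Y$ carries the Gaussian part together with the \levy part $\nu|_{B_R}$ (all jumps of size $\le R$), so $\pr{|Y|>r}\ll \exf{-cr}$ for some $c>0$; and $W$ is compound Poisson driven by $\nu_R:=\nu|_{\ssp\setminus B_R}$, a finite measure of mass $\lambda_R$. Conditioning on the number of large jumps,
\begin{equation*}
  F(x+h I_d)=\exf{-\lambda_R}\sum_{n=0}^\infty
    \frac{1}{n!}\int F_Y(x-w+h I_d)\,\nu_R^{*n}(\dd w).
\end{equation*}
The $n=0$ term is $\ll\exf{-c|x|}$. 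For $n\ge 1$, the concentration of $F_Y$ yields $\int F_Y(x-w+h I_d)\,\nu_R^{*n}(\dd w)\ll\nu_R^{*n}(x+h' I_d)+\lambda_R^n\exf{-c|x|/2}$ with $h'=h+O(1)$, and the ``single large jump'' estimate for regularly varying measures gives $\nu_R^{*n}(x+h' I_d)\ll n\lambda_R^{n-1}\nu(x+h'' I_d)$ uniformly in $n$ for $|x|\gg 1$, with some $h''\ge h'$. Summing the series (using $\sum_{n\ge 1}n\lambda_R^{n-1}/n!=\exf{\lambda_R}$) then gives $F(x+h I_d)\ll \nu(x+h'' I_d)+\exf{-c|x|/2}$.

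Covering $h'' I_d$ by finitely many translates of $h I_d$ and using regular variation of $A_\nu$, the hypothesis on $\omega_\nu$ implies the same integral condition with $\nu(x+h'' I_d)$ in place of $\nu(x+h I_d)$, at the cost of enlarging $T$ by a factor equal to the number of translates. Combined with $A\asymp A_\nu$ and the pointwise tail bound above, this yields the condition \eqref{e:bounded-ratio} for $\omega$, and Proposition \ref{p:bounded-ratio} completes the proof. The main technical obstacle is the uniform-in-$n$ multivariate ``single large jump'' estimate for $\nu_R^{*n}$ in local form; one approach is to discretize $\nu_R$ on a lattice of width $\asymp h$ and apply known local large-deviation bounds for sums of iid random vectors with regularly varying tails, of the sort developed in Section \ref{s:local-LDP}.
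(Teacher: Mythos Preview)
Your approach differs from the paper's and has a genuine gap at the ``single large jump'' step. The local Kesten-type bound $\nu_R^{*n}(x+h'I_d)\ll n\lambda_R^{n-1}\nu(x+h''I_d)$ uniformly in $n$, and consequently the pointwise comparison $F(x+hI_d)\ll \nu(x+h''I_d)+\exf{-c|x|/2}$, can fail under the stated hypotheses. Take $d=3$, $\alpha=1$, and let $\nu$ have density $|x|^{-4}$ on $\{|x|>1\}\cap(C_+\cup(-C_+))$ with $C_+=\{x:x_1>|x|/2\}$, and zero elsewhere (no Gaussian part). Then $\omega_\nu$ is bounded, so \eqref{e:bounded-ratio} holds trivially, and the symmetric ID law $F$ lies in $\Cal D'_0(1)$. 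For $x\perp e_1$ with $|x|$ large, $\nu(x+h''I_d)=0$ for any fixed $h''$, yet $\nu^{*2}(x+h'I_d)\gtrsim |x|^{-5}$ (via $x=y+(-z)$ with $y,z\in C_+$, $|y|,|z|\asymp|x|$), so already at $n=2$ your estimate fails, and $F(x+hI_d)\gtrsim|x|^{-5}$ is not dominated by $\nu(x+h''I_d)+\exf{-c|x|}$. The tools of Section~\ref{s:local-LDP} bound the ``all jumps small'' event, not the convolution structure you need here.

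The paper sidesteps this by never comparing $F$ to $\nu$ pointwise. Instead it decomposes $S_n(X)\sim S_{S_n(N)}(Y)+S_n(V)$, where $N_i\sim\dpois(\mu)$ counts large jumps, $Y$ has law $G$ equal to $\nu_1/\mu$ shifted by a constant, and $V$ has all exponential moments. Conditioning on the light-tailed displacement and on $S_n(N)$, the small-$n$ sum $\sum_{n\le A(\delta s)}\Pseq n{s\omega+hI_d}$ is bounded by $\sum_{k\le A(M\delta s)}\sup_{|y|>(1-\rx)s}G^{*k}(y+hI_d)$ plus errors controlled by Poisson and Cram\'er large deviations. Since $\omega_G\asymp\omega_\nu$ (up to a fixed shift) and $A\sim A_\nu$, Proposition~\ref{p:bounded-ratio} applies \emph{to $G$} directly, yielding the required $o(A(s)/s^d)$ bound. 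The reduction lives at the level of the renewal sums, not of the one-step law $F$, which is why the angular support of $\nu$ never causes trouble.
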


\section{Outline of proof} \label{s:outline}
\subsection{Preliminaries}
Several facts about distributions in the domain of attraction will be
needed.  For random variable $X$ in $\ssp$, for $s>0$ and $u\in\ssp$,
denote
\begin{align*}
  \begin{split}
    q_X(s)=\pr{|X|>s},
    \quad&
    c_X(s)=\mean[X\cf{|X|\le s}],
    \\
    m_X(s,u)=\mean[\ip u X^2\cf{|X|\le s}],
    \quad&
    V_X(s)=\mean[|X|^2\cf{|X|\le s}].
  \end{split}
\end{align*}
For $\inum x\in\ssp$, denote $S_0(x)=0$ and $S_n(x) = S_{n-1}(x) +
x_n$, $n\ge 1$.  For $0<\alpha\le 2$, denote $F\in\DA(\alpha)$ if
there are $a_n\in\Reals$ and $b_n\in\ssp$, such that for $\inum X$
\iid $\sim F$, $S_n(X)/a_n - b_n$ weakly converges to an
$\alpha$-stable law that is nondegenerate (\cite {\Sato}, Def.~24.16).
See \cite{\Rva}, Th.~4.1--4.2, for necessary and sufficient conditions
for $F\in \DA(\alpha)$.  In particular, if $X\sim F\in \DA(\alpha)$,
then
\begin{align} \label{e:prelim-V} 
  V_X\in \RV_{2-\alpha},
\end{align}
where $\RV_\theta$ denotes the class of functions that are regularly
varying at $\infty$ with exponent $\theta$, and
\begin{align} \label{e:prelim-q-V}
  q_X(s) = [2/\alpha-1+o(1)] V_X(s) /s^2, \quad \text{as}\ s\toi.
\end{align}
Let $A$ be any function such that 
\begin{align} \label{e:prelim-A}
  A(s) \sim s^2 / V_X(s) \quad\text{ as } s\toi.
\end{align}
Then for any sequence $a_n$ such that $A(a_n)/n\to 1$ as $n\toi$,
\begin{align}  \label{e:prelim-WC}
  S_n(X)/a_n - b_n\conv D \mu \quad\text{for suitable $b_n$},
\end{align}
where $\mu$ is the aforementioned stable law.   Define $a_0=1$.  By
definition, $F\in 
\DA_0(\alpha)$ if \eqref{e:prelim-WC} holds with $b_n=0$, in which
case $\mu$ is strictly stable (\cite{breiman:92:siam}, \S9.8).  For
$F\in\DA(\alpha)$,
\begin{align} \label{e:centering}
  F\in\DA_0(\alpha)  \Iff
  \text{$(n/a_n) c_X(a_n)$ converges as $n\toi$}.
\end{align}

Proofs of the above facts are readily available for the univariate
case (\cite{\Bing}) but not so for the multivariate case.  For
convenience, their proofs are given in the Appendix \ref
{s:mv-stable}.

\subsection{Components of proof}
As noted in Section \ref{s:intro}, the probabilistic approach to the
SRT deals with the big-$n$ and small-$n$ contributions \eqref
{e:big-small-n} in different ways.  Theorem \ref{t:SRT} is a
consequence of the following results.   
\begin{theorem}[Big-$n$ contribution] \label{t:big-n}
  Let $F\in \DA_0(\alpha)$, $A$, $\psi$, and $\varrho_s$ be as in
  Theorem \ref{t:SRT}.  Let $X\sim F$ and $T\in \Reals^{d\times d}$ be
  nonsingular such that $T X = (L, W, Z)$ as in \eqref
  {e:normalization}.  Give $\delta>0$ and $h>0$, define $\Delta_h$ as
  in \eqref{e:cell} and
  \begin{align*}
    r_{\delta,h}(s\omega)
    = 
    \frac{s^d}{A(s)} \sum_{n\ge A(\delta s)}
    \Pseq n{s\omega + T^{-1} \Delta_h}.
  \end{align*}
  Then as $s\toi$,
  \begin{align} \label{e:big-n}
    \sup_{\omega\in S^{d-1}} \Abs{
      r_{\delta,h}(s\omega)
      - \frac{h^{d-\nu} \varrho_\delta(\omega)}{|\det T|}
    } = o_{\delta,h}(1).
  \end{align}
\end{theorem}

\begin{theorem}[Small-$n$ contribution] \label{t:small-n}
  Let $F\in \DA_0(\alpha)$ and $A$ be as in Theorem \ref{t:SRT}.
  Define $\kappa$ as in \eqref{e:kappa}. Then given $0 < \theta <
  1/\kappa$, $\eta>0$, and $\rx>0$, for $0<\delta \ll_{\theta, \eta,
    \rx} 1$, $s\gg_{\theta, \eta, \rx, \delta,h} 1$, and $n\le
  A(\delta s)$,
  \begin{align} \label{e:small-n-local}
    \sup_{\omega\in S^{d-1}}
    \Pseq n{s\omega + h I_d}
    \ll_h
    n a^{-d}_n\sup_{|t|>\theta s} K(t, a_n, \eta, h)
    + \rx_n(s)
  \end{align}
  with $\rx_n(s)>0$ satisfying
  \begin{align} \label{e:small-n-r}
    \sum_{n\le A(\delta s)} \sup_{r\ge s} \rx_n(r)\ll \rx A(s)/s^d.
  \end{align}
  In particular,
  \begin{align*}
    \sum_{n\le A(\delta s)}
    \sup_{|x|\ge s}
    \Pseq n{x + h I_d}
    \ll_h
    \sum_{n\le A(\delta s)}
    n a^{-d}_n\sup_{|t|>\theta s} K(t, a_n, \eta, h)
    + \rx A(s)/s^d.
  \end{align*}
\end{theorem}

It is clear that the big-$n$ contribution $r_{\delta,h}(s\omega)$
depends on the the lattice-nonlattice composition of $X$.  In
contrast, in dealing with the small-$n$ contribution, the
lattice-nonlattice composition is unimportant.  Once the two theorems
are proved, Theorem \ref{t:SRT} immediately follows from the next
result, which itself has some application; see Example \ref{ex:w-5A}.

\begin{prop} \label{p:big-n}
  Let $F\in \DA_0(\alpha)$, $A$, $\psi$, and $\varrho_s$ be as in
  Theorem \ref{t:SRT}.  Then, if the small-$n$ contribution is
  asymptotically negligible, i.e,
  \begin{gather} \label{e:small-n}
    \lim_{\delta\to 0} 
    \Lsup_{s\toi} \sup_{\omega\in S^{d-1}} \frac{s^d}{A(s)}
    \sum_{n\le A(\delta s)} 
    \Pseq n{s\omega + h I_d} = 0,
  \end{gather}
  then \eqref{e:uniform-SRT} and \eqref{e:uniform-SRT2} hold.
  Conversely, if \eqref{e:uniform-SRT} and \eqref{e:uniform-SRT2}
  hold, then \eqref{e:small-n} holds.
\end{prop}

\begin{proof}[Remark]\
  \begin{enumerate}
  \item  If $X\in\Ints^d$ is aperiodic, then Theorem \ref {t:big-n} is
    implied by \cite{\Will}, Eq.~(3.6).  When $X$ is not strongly
    aperiodic, the proof in \cite{\Will} relies on approximating $X$
    by a strongly aperiodic one; also see \cite{\Spit}, P26.1.
    However, by Proposition \ref {p:aperiod}, for some $L$ and $D$ as
    in \eqref {e:normalization}--\eqref {e:cell0} and some $\Upsilon
    \in \Ints^{d \times d}$ with $\det\Upsilon = \pm 1$, $T X = L$
    with $T= D^{-1} \Upsilon$.  Letting $\Delta_d = D^{-1} \Upsilon
    I_d$, $U(x+I_d) = U(x+T^{-1} \Delta_d)$.  Then, without
    approximation, Theorem \ref {t:big-n} leads to Eq.~(3.6) in
    \cite{\Will}.
    
  \item In \cite{\Will}, for aperiodic $X$, it is shown that
    \eqref{e:small-n} combined with \eqref{e:uniform-SRT2} implies
    \eqref {e:uniform-SRT}.  However, by Proposition \ref{p:big-n},
    \eqref{e:small-n} implies both \eqref {e:uniform-SRT} and \eqref
    {e:uniform-SRT2}.
    
  \item Proposition \ref{p:big-n} is weaker than Proposition A of
    \cite {chi:15:aap} which essentially states that for $d=1$, $|x|
    U(x+\Delta_h)/A(|x|)$ converges as $x\to \pm\infty \Iff$ \eqref
    {e:small-n} holds, and if either happens the limit must be $h
    \varrho_0$.  However, the argument for that result does not apply
    to $d>1$.  \qedhere
\end{enumerate}
\end{proof}

\begin{proof}[Proof of Proposition \ref{p:big-n}]
  Since $\psi$ is bounded and $d>\alpha$, $\sup_\omega
  \varrho_s(\omega) < \infty$ for each $s>0$ and $\varrho_s(\omega)
  \uparrow \varrho_0(\omega)$ as $s\downarrow 0$.  Clearly,
  $r_{\delta,h}(s\omega)\ge 0$ is decreasing in $\delta$.  By \eqref
  {e:big-n}, for $s\gg 1$, $\sup_\omega r_{1,h}(s\omega) < \infty$.
  Then from $r_{0,h}(s\omega) - r_{1,h}(s\omega)  =
  (s^d/A(s))\sum_{n<A(s)} \Pseq n{s\omega + \Delta_h}\le s^d$, 
  it follows that $\sup_\omega r_{0,h}(s\omega) < \infty$.  Therefore,
  the differences in \eqref{e:uniform-SRT} and \eqref{e:uniform-SRT2}
  are well-defined.

  Suppose \eqref{e:small-n} holds.  It is easy to see that \eqref
  {e:small-n} still holds if $h I_d$ is replaced with any bounded set,
  in particular $\Delta_h$.  Then given $\rx>0$, there is $\eta>0$,
  such that for any $0<\delta\le \eta$ and $s\gg_\eta 1$, $0\le
  \sup_\omega |r_{0,h}(s\omega) - r_{\delta,h}(s\omega)| \le
  \sup_\omega |r_{0,h}(s\omega) - r_{\eta,h}(s\omega)| \le h^{d-\nu}
  \rx$.  By Theorem \ref{t:big-n}, for $s\gg_\delta 1$, $\sup_\omega
  |r_{\delta,h}(s\omega)- h^{d-\nu} \varrho_\delta(\omega)| \le
  h^{d-\nu}\rx$.  Combining the inequalities, $\sup_\omega|r_{0,h}(s
  \omega) - h^{d-\nu} \varrho_\delta(\omega)| \le 2h^{d-\nu}\rx$.  As
  the inequality holds for $\eta$ and $\delta$, $\sup_\omega
  |\varrho_\delta(\omega) - \varrho_\eta(\omega)|\le 4\rx$.  Letting
  $\delta\to 0$ yields \eqref {e:uniform-SRT2} and then \eqref
  {e:uniform-SRT} follows easily.  Conversely, if \eqref
  {e:uniform-SRT} and \eqref {e:uniform-SRT2} hold, then by Theorem
  \ref{t:big-n}, given $\rx>0$, there is $\delta>0$ such that
  $\Lsup_{s\toi} \sup_\omega |r_{0,h}(s\omega) - r_{\delta,h}(s
  \omega)| < \rx$, so \eqref{e:small-n} holds if $h I_d$  therein is
  replaced with $\Delta_h$.  Since $h I_d$ can be covered by a finite
  number of $z + \Delta_h$, then \eqref{e:small-n} follows.
\end{proof}

As an application of Proposition \ref{p:big-n}, consider a classical
example on multivariate SRT given in \cite{\Will}.  The following
formulas will be used in Example \ref{ex:w-5A} and in the proofs in
Section \ref {s:proof-props}.  First, it can and will always be
assumed without loss of generality that $A$ is strictly increasing and
\begin{align} \label{e:prelim-A1}
  A(0)=0, \quad
  A'(s)\asymp A(s)/s\ \text{ for } s>0.
\end{align}
Then given $\beta$, for $s\gg 1$, by change of variable and $A'(s)
\asymp A(s)/s$,
\begin{align}  \label{e:L}
  \tilde A_\beta(s):=\sum_{n\le A(s)} n a^{-\beta}_n
  \ll \int_1^{A(s)} \frac{u\,\dd u}{(A^{-1}(u))^\beta}
  \ll \int_{a_1}^s \frac{A(u)^2\,\dd u}{u^{\beta+1}}
  \ll
  \begin{cases}
    A(s)^2 s^{-\beta} & \text{if } \alpha>\beta/2 \\
    O(1) & \text{if } \alpha<\beta/2.
  \end{cases}
\end{align}

\begin{example} \label{ex:w-5A}
  Consider the following modified version of Example 5-A in \cite
  {\Will}.  Let $d>1$.  Let $\xi\in\Ints\setminus\{0\}$, such that for
  $k\in\Nats$,
  \begin{align*}
    \pr{\xi=k} = \pr{\xi=-k} =
    \begin{cases}
      c k^{-1-d/2}\ln k & \text{if } k\not\in\{2^n: n\ge 1\}
      \\
      c k^{-d/2}/b_k & \text{otherwise}
    \end{cases}
  \end{align*}
  where $c>0$ is the normalizing constant and $b_k\gg 1$.  Let $X =
  (\eno X d)$, with $X_i$ \iid $\sim\xi$.  Then $X\in \DA_0(\alpha)$
  with $\alpha = d/2$.  The limiting stable density is $\psi(u)
  = g(u_1)\cdots g(u_d)$, with $g$ the univariate symmetric
  $\alpha$-stable density.  Then for any $\omega\in S^{d-1}$,
  since it has at least one coordinate with absolute value $\ge 1/d$,
  $\psi(s\omega) \ll \sup_{1/d\le a\le 1} g(a s) \ll s^{-\alpha-1}$,
  giving $\psi(s\omega) s^{d-\alpha-1} \ll s^{d-2\alpha - 2} = s^{-2}$
  for $s\gg 1$.  As a result, \eqref {e:uniform-SRT2} holds. 

  On the other hand, by \cite{\Will}, if $d=2,3$ and $b_k \ll 1+\ln
  k$, then the SRT fails to hold for $X$.  It will be shown next that
  if $d=2,3$, then the SRT \eqref {e:uniform-SRT} holds $\Iff \ln k =
  o(b_k)$ as $k\toi$, and if $d=4$, then the SRT  \eqref
  {e:uniform-SRT} holds $\Iff (\ln k)^2 = o(b_k)$.

  Without loss of generality, let $h=1$, so $h I_1 = [0,1)$.  Let
  $b(z)$ be a function such that $b(z)\equiv b_k$ in each $[k,k+1)$.  
  First, let $d=2$ or $3$.  It suffices to show that if $\ln k =
  o(b_k)$, then the SRT \eqref{e:uniform-SRT} holds.  By \cite{\Will},
  $X\sim -X\in\DA_0(\alpha)$ with $\alpha = d/2\in (0,2)$.  Since
  $\alpha \in (0,2)$, from
  \begin{align*}
    \pr{\xi>s} \le q_X(s) = \pr{|X|>s} \le d \pr{|\xi|>s/\sqrt d}
    \asymp \pr{\xi>s} \asymp s^{-d/2}\ln s, \quad s\gg 1,
  \end{align*}
  it follows that $A(s) \sim 1/q_X(s) \sim C s^{d/2}/\ln s$ for some
  constant $C>0$.

  Fix $\theta>0$ and $0< \eta < 1/(10d)$.  Then the bound \eqref
  {e:K-ind-comp} still holds, i.e., for $t=(\eno t d)$, letting $x =
  t_i$ with $|t_i| = \max |t_j|$,
  \begin{align*}
    K(t,a, \eta, h)
    \le 
    \int_{|u|<\eta d |x|}
    \pr{\xi\in x -u + I_1} \exf{-|u|/a}\,\dd u.
  \end{align*}
  For each $u$ with $|u|<\eta d|x|$, $x - u + I_1$ has exactly one
  $k\in\Ints$.   For $|t|\gg 1$, $|k| \asymp |x - u| \asymp |x| \asymp
  |t|$.  If $k\not\in \{\pm 2^n: n\ge 1\}$, then $\pr{\xi\in x - u +
    I_1} = c |k|^{-1-d/2}\ln |k| \asymp |t|^{-1-d/2}\ln |t| =
  q_X(|t|)/|t|$, and likewise, if $k\in \{\pm 2^n: n\ge 1\}$, then
  $\pr{\xi\in x - u + I_1} \asymp |t|^{-d/2}/b(t) \ll q_X(|t|)/(b(|t|)
  \ln |t|)$.  It can be seen that the set of $u\in (-\eta d |x|, \eta
  d |x|)$ with $x - u + I_1$ containing one $k\in\{\pm 2^n: n\ge 1\}$
  is a single interval of length at most 1.  Then
  \begin{align*}
    K(t,a, \eta, h)
    &\ll 
    \frac{q_X(|t|)}{|t|}\int \exf{-|u|/a}\,\dd u 
    + \frac{q_X(|t|)}{b(|t|)\ln |t|}
    \ll
    \frac{a q_X(|t|)}{|t|}
    + \frac{q_X(|t|)}{b(|t|)\ln |t|}.
  \end{align*}
  Since $q_X(s)/(\ln s)^2 \asymp A(s)/s^d$, if $\ln s = o(b(s))$ as
  $s\toi$, then for $s\gg 1$,
  \begin{align} \label{e:w-K}
    \sum_{n\le A(\delta s)} n a^{-d}_n
    \sup_{|t|>\theta s} K(t,a_n, \eta, h)
    \ll
    \frac{\tilde A_{d-1}(\delta s)}{s A(s)}
    +
    o(A(s)/s^d) \tilde A_d(\delta s).
  \end{align}
  Given $\delta>0$, for $s\gg_\delta 1$, by \eqref{e:L}
  $\tilde A_{d-1}(\delta s) \ll A(\delta s)^2/(\delta s)^{d-1}\ll
  \delta s/(\ln s)^2$, while
  \begin{align*}
    \tilde A_d(\delta s) 
    \ll
    \int_{a_1}^{\delta s} \frac{A(u)^2\,\dd u}{u^{d+1}}
    \ll
    \int_{a_1}^2 \frac{A(u)^2\,\dd u}{u^{d+1}} +
    \int_2^\infty \frac{\dd u}{u(\ln u)^2} < \infty.
  \end{align*}
  Then the LHS in \eqref{e:w-K} is $O(\delta(\ln s)^{-2}/A(s)) +
  o(A(s)/s^d) = O(\delta A(s)/s^d)$.  Since $\delta$ is arbitrary,
  then by Theorem \ref{t:SRT}, the SRT holds.

  Next let $d=4$.  Then $\mean|X|^2=\infty$.  However, as $s\toi$,
  $s^2 q_X(s)\asymp \ln s$,
  \begin{align*}
    V_X(s) = \int_0^s u^2 \pr{|X|\in \dd u}
    = 2 \int_0^s u [q_X(u) - q_X(s)]\,\dd u
    \asymp (\ln s)^2,
  \end{align*}
  and for $t=(\eno t 4)$,
  \begin{align*}
    m_X(s,t) = \sum_{i=1}^4 t_i^2 \mean[X_i^2 \cf{|X|\le s}] +
    \sum_{i\ne j} t_i t_j \mean[X_i X_j \cf{|X|\le s}
    = |t|^2 V_X(s)/d.
  \end{align*}
  Then by Theorem 4.1 of \cite{\Rva}, $X\sim -X\in \DA_0(2)$ and
  $A(s) \sim C s^2/(\ln s)^2$ as $s\toi$ for some constant $C>0$.  The
  bound on $K$ just above \eqref{e:w-K} still holds.  Then
  \begin{align} \label{e:w-K-2}
    \sum_{n\le A(\delta s)} n a^{-4}_n
    \sup_{|t|>\theta s} K(t,a_n, \eta, h)
    &\ll
    \frac{q_X(s)\tilde A_3(\delta s)}{s}
    +
    \frac{q_X(s) \tilde A_4(\delta s)}{b(s) \ln s}.
  \end{align}
  Given $\delta>0$, for $s\gg_\delta 1$, as $A(s) = s^2/(\ln s)^2$, by
  similar calculation as in the case $d=2$ or $3$, $\tilde A_3(\delta
  s)\ll \delta s/(\ln s)^4$ and $\tilde A_4(\delta s) = O(1)$.  Then
  the LHS of \eqref{e:w-K-2} is $s^{-4} A(s) O(\delta/\ln s+ (\ln
  s)^2/b(s))$.  Thus, if $(\ln s)^2 = o(b(s))$, then \eqref
  {e:uniform-SRT} holds.  Conversely, if $b(s)/(\ln s)^2\not\toi$,
  then by similar argument as in \cite{\Will}, \eqref{e:small-n}
  cannot hold.  Since \eqref{e:uniform-SRT2} holds, by Proposition
  \ref{p:big-n}, the SRT \eqref{e:uniform-SRT} cannot hold.  \qed
\end{example}

\section{Basic bounds} \label{s:bounds}
\subsection{\levy concentration function} 
For a random variable $X\in\ssp$, define
\begin{align*}
  Q_X(h) = \sup_{x\in\ssp} \pr{X\in x + h I_d}, \quad h\ge 0.
\end{align*}
The function is a special case of \levy concentration function of
multivariate random variables, which has been studied before (\cite
{zigel:81:tpa, miroshnikov:89:tpa}).  The purpose here is to show the
following result.
\begin{lemma} \label{l:lc}
  There is an absolute constant $c_d>0$, such that 
  \begin{align*}
    Q_X(h)
    \le
    c_d[(1/a)\vee h]^d \int_{\Norm t \le a}
    |\chf_X(t)|\,\dd t, \quad a>0.
  \end{align*}
\end{lemma}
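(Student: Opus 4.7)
The plan is to apply the standard smoothing / Fourier inversion argument used for multivariate L\'evy concentration functions, but calibrated to the resolution scale $b := (1/a)\vee h$ rather than $h$ itself. Since $b\ge h$, the cube $x+h I_d$ is contained in $x+b I_d$, so it is enough to bound $\sup_x \Pr{X\in x+b I_d}$; and since $1/b=a\wedge(1/h)\le a$, a smoother at scale $b$ will automatically have Fourier support inside $\Cbr{\snorm t\le a}$.

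First I will fix once and for all a nonnegative Schwartz function $K_0:\Reals\to[0,\infty)$ satisfying $K_0\ge 1$ on $[0,1]$ and with $\ft{K_0}$ supported in $[-1,1]$. Such a $K_0$ exists: pick any Schwartz $\psi$ with $\ft\psi$ supported in $[-1/2,1/2]$ and $\psi(0)\ne 0$, replace $\psi$ by $|\psi|^2$ (so that the Fourier transform, now $\ft\psi * \ft{\bar\psi}$, is supported in $[-1,1]$), and multiply by a large enough positive constant. Then $K(x):=\prod_{j=1}^d K_0(x_j)$ satisfies $K\ge 1$ on $I_d$ and has $\ft K$ supported in $\Cbr{\snorm t\le 1}$, with $\snorm{\ft K}_\infty\le \snorm{\ft{K_0}}_\infty^d$. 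Consequently, for each $x,y\in\ssp$,
\begin{align*}
\cf{y\in x+b I_d}\le K((y-x)/b),
\end{align*}
so $\Pr{X\in x+b I_d}\le \mean[K((X-x)/b)]$.

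Next I will apply Fourier inversion with the convention $\ft f(t) = \int \exfi{\ip t z}f(z)\,\dd z$ used in the paper, namely $K(u) = (2\pi)^{-d}\int \ft K(t)\exfn{\ip t u}\,\dd t$, and then substitute $t=bs$ and take expectations:
\begin{align*}
\mean[K((X-x)/b)]
= (2\pi)^{-d} b^d \int \ft K(b s)\exfi{\ip s x}\chf_X(-s)\,\dd s.
\end{align*}
Taking absolute values, using $|\chf_X(-s)|=|\chf_X(s)|$, and noting that $\ft K(b\,\cdot)$ vanishes outside $\Cbr{\snorm s\le 1/b}\subseteq\Cbr{\snorm s\le a}$, one arrives at
\begin{align*}
\Pr{X\in x+b I_d}\le c_d\, b^d \int_{\snorm s\le a}|\chf_X(s)|\,\dd s
\end{align*}
uniformly in $x$, with $c_d:=(2\pi)^{-d}\snorm{\ft{K_0}}_\infty^d$. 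Since $b^d = [(1/a)\vee h]^d$, this is exactly the inequality claimed.

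The only substantive step is exhibiting the one-dimensional smoother $K_0$ with simultaneous pointwise lower bound on $[0,1]$ and compactly supported Fourier transform; once $K_0$ is fixed, the product structure on $\ssp$ plus the Fourier inversion formula do the rest. Hence there is no serious obstacle beyond that initial construction, which is classical.
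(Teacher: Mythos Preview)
Your proof is correct and follows essentially the same Fourier-smoothing route as the paper, which uses the explicit kernel $f_0(y)=\frac{3}{8\pi}[\sin(y/4)/(y/4)]^4$ (from Petrov) and then splits into the cases $a\le 1/h$ and $a\ge 1/h$; your calibration directly at scale $b=(1/a)\vee h$ is a tidy way to avoid that split. One small gap: from $\psi(0)\ne 0$ alone you cannot conclude $|\psi|^2>0$ on all of $[0,1]$, so ``multiply by a large enough positive constant'' is not quite enough as written --- you also need $\psi$ nonvanishing on $[0,1]$, which is easily arranged (dilate $\psi$ to push its first zero past $1$, or just take the explicit $f_0$ above, whose first positive zero is at $4\pi$).
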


\begin{proof}
  The argument follows the one on p.~22--26 of \cite{\Petr}.  Let $f$
  be a probability density on $\ssp$ such that $f(x) = f(-x)$ and $\ft
  f\in L^1$.  For $y\in \ssp$ and $a>0$, by applying Fourier inversion
  formula to the density of $X + a^{-1} Y$, where $Y$ has density $f$
  and is independent of $X$,
  \begin{align*}
    \int f(a x) \,\pr{X\in y+\dd x}
    &=
    \nth{(2\pi a)^d} \int \exfi{\ip t y} \ft f(t/a) \chf_X(t)\,\dd t
    \le
    \nth{(2\pi a)^d} \int |\ft f(t/a) \chf_X(t)|\,\dd t.
  \end{align*}
  On the other hand,
  \begin{align*}
    \int f(a x) \pr{X\in y+\dd x}
    &\ge
    \int_{\Norm x \le h/2} f(a x) \pr{X\in y+\dd x}
    \\
    &\ge
    \inf_{\Norm x \le a h/2} f(x) \cdot \pr{X\in y + [-h/2, h/2]^d}.
  \end{align*}
  As a result,
  \begin{align} \label{e:Q}
    Q_X(h)
    \le
    (2\pi a)^{-d} \sup_{\Norm x \le ah /2} \nth{f(x)}
    \int |\ft f(t/a) \chf_X(t)|\,\dd t.
  \end{align}
  Now for $x=(\eno x d)$, let $f(x) = \prod f_0(x_i)$, where $f_0(y) =
  3/(8\pi) [\sin(y/4)/(y/4)]^4$ for $y\in\Reals \setminus\{0\}$ and
  $f_0(0)=3/(8\pi)$.  Then for $t=(\eno t d)$, $\ft f(t) = \prod \ft
  f_0(t_i)$, where
  \begin{align*}
    \ft f_0(t_i) =
    \begin{cases}
      0 & \text{if } |t_i|\ge 1,
      \\
      2(1-|t_i|)^3 & \text{if } |t_i|\in [1/2,1]
      \\
      1-6 t_i^2 + 6 |t_i|^3 & \text{if } |t_i|\le 1/2.
    \end{cases}
  \end{align*}
  See p.~25 \cite{\Petr}.  Let $c_d=(2\pi)^{-d} \sup_{\Norm x\le 1/2}
  1/f(x)$.  Then by \eqref{e:Q}, for $a\le 1/h$,
  \begin{align*}
    Q_X(h)
    \le
    c_d(1/a)^d \int_{\Norm t \le a} |\chf_X(t)|\,\dd t.
  \end{align*}
  On the other hand, for $a\ge 1/h$
  \begin{align*}
    c_d h^d \int_{\Norm t \le a} |\chf_X(t)|\,\dd t
    \ge
    c_d h^d \int_{\Norm t \le 1/h} |\chf_X(t)|\,\dd t
    \ge
    Q_X(h),
  \end{align*}
  where the second inequality follows from the previous display.
  Combining the two displays then finishes the proof.
\end{proof}

\subsection{Local large deviation}
The following bounds will be used in the proof of Theorem
\ref{t:small-n}.
\begin{prop} \label{p:local-LDP}
  Let $F\in \DA_0(\alpha)$ and $X,\inum X$ be \iid $\sim F$.
  Put $M_0=0$ and for $n\ge 1$, put $M_n = \max\{|X_1|, \ldots,
  |X_n|\}$.   Then there are $s_0>0$, $C>0$ both only depending on $\{F,
  A\}$, such that for all $x\in\ssp$, $s\ge s_0$, $h>0$, and $n\ge 0$,
  \begin{align*}
    \pr{S_n(X)\in x + h I_d\AND M_n \le s}
    \ll_h (s^{-d} + a^{-d}_n) \exf{-|x|/s + C n/A(s)}.
  \end{align*}
\end{prop}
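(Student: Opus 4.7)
The plan is to combine an exponential (Esscher) tilt with the L\'evy concentration bound of Lemma~\ref{l:lc}. Put $\tilde X_i = X_i\cf{|X_i|\le s}$; on $\{|X_{n:1}|\le s\}$ we have $S_n(X)=S_n(\tilde X)$, so it suffices to bound $\pr{S_n(\tilde X)\in x+h I_d}$. For $x\ne 0$, choose the tilt vector $\lambda = x/(s|x|)\in\ssp$, with $|\lambda|=1/s$ and $\ip\lambda x=|x|/s$; for $x=0$ take $\lambda=0$. Let $M(\lambda)=\mean\exf{\ip\lambda{\tilde X}}$ and let $\tilde X^\lambda$ denote the $\lambda$-Esscher tilt of $\tilde X$. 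Then Esscher's identity, together with the elementary bound $\exf{-\ip\lambda y}\le \exf{-|x|/s+\sqrt d\,h/s}$ valid for $y\in x+h I_d$, gives
\begin{align*}
\pr{S_n(\tilde X)\in x+h I_d}
\le M(\lambda)^n\,\exf{-|x|/s+\sqrt d\,h/s}\,Q_{S_n(\tilde X^\lambda)}(h),
\end{align*}
which reduces the problem to showing $M(\lambda)^n\le \exf{Cn/A(s)}$ and $Q_{S_n(\tilde X^\lambda)}(h)\ll_h(s^{-d}+a^{-d}_n)\exf{Cn/A(s)}$ for some constant $C>0$ depending only on $\{F,A\}$.

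The MGF bound follows from Taylor expansion. Since $|\ip\lambda{\tilde X}|\le 1$ (and the contribution vanishes when $|X|>s$ as $\tilde X=0$), the inequality $\exf u\le 1+u+C u^2$ on $[-1,1]$ gives
\begin{align*}
M(\lambda)\le 1+\ip\lambda{c_X(s)}+Cm_X(s,\lambda).
\end{align*}
Because $F\in\Cal D'_0(\alpha)$, the centering relation \eqref{e:centering} and regular variation of $A$ yield $|c_X(s)|\ll s/A(s)$, so $|\ip\lambda{c_X(s)}|\le |c_X(s)|/s\ll 1/A(s)$. The defining property of $A$ gives $m_X(s,\lambda)\le |\lambda|^2 V_X(s)=V_X(s)/s^2\ll 1/A(s)$. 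Hence $M(\lambda)\le 1+C/A(s)$, and $M(\lambda)^n\le\exf{Cn/A(s)}$ as required.

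For the concentration factor, apply Lemma~\ref{l:lc} to $S_n(\tilde X^\lambda)$ with $a=s\vee a_n$, so that $[(1/a)\vee h]^d\ll_h s^{-d}+a_n^{-d}$. The main remaining task is to estimate $\int_{\snorm t\le a}|\chf_{\tilde X^\lambda}(t)|^n\,\dd t$. Writing $\chf_{\tilde X^\lambda}(t)=\chf_{\tilde X}(t-\iunit\lambda)/M(\lambda)$, I would Taylor-expand $|\chf_{\tilde X^\lambda}(t)|^2$ in $t$ using the symmetrization trick $|\chf_{\tilde X^\lambda}(t)|^2=\mean\cos\ip t{\tilde X^\lambda-\tilde X^{\lambda,\prime}}$ with an independent copy, obtaining a bound of the form $|\chf_{\tilde X^\lambda}(t)|^2\le 1-c|t|^2 V_X(1/|t|\vee s)/s^2$ for small $|t|$ and a uniform-in-$\lambda$ minoration $1-|\chf_{\tilde X^\lambda}(t)|\ge c/A(1/|t|)$ on the intermediate range; these are standard char-function bounds for $F\in\Cal D'_0(\alpha)$ and integrate to the required $\exf{Cn/A(s)}$ estimate. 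The main obstacle is to make these bounds uniform in the tilt $\lambda$ of size $1/s$; because the tilt is small, its effect on the Taylor coefficients of $\log\chf_{\tilde X}$ is itself $O(1/A(s))$ and is absorbed into the exponential factor. Finally, the ``in particular'' claim $Q_{S_n(X)}(h)\ll_h a_n^{-d}$ is the case $s=\infty$: no truncation or tilt is needed, and Lemma~\ref{l:lc} with $a=a_n$ yields the bound directly from $\int_{\snorm t\le a_n}|\chf_X(t)|^n\,\dd t=O(a_n^{-d})$, a standard consequence (via rescaling $t=u/a_n$) of the weak convergence of $S_n(X)/a_n$ to the stable limit, whose smooth density has an integrable characteristic function.
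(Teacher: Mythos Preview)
Your overall architecture---Esscher tilt in the direction $x/|x|$ with magnitude $1/s$, then a L\'evy concentration bound on the tilted sum---is exactly the paper's approach, and your MGF estimate $M(\lambda)\le 1+C/A(s)$ is correct. The gap is in how you deploy Lemma~\ref{l:lc}.

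Your choice $a=s\vee a_n$ does not do what you claim. With this $a$ one has $1/a=\min(1/s,1/a_n)$, so once $s\vee a_n>1/h$ (the generic regime) the prefactor is $[(1/a)\vee h]^d=h^d$, a constant in $h$ that cannot be $\ll_h s^{-d}+a_n^{-d}$ as $s,a_n\to\infty$. Worse, with $a=s\vee a_n$ you are integrating $|\chf_{\tilde X^\lambda}(t)|^n$ over a region of diameter $\asymp s\vee a_n$; for distributions with a lattice component the modulus returns to $1$ periodically, so the integral is not $O(1)$ and the bound collapses. The same objection defeats your separate argument for $Q_{S_n(X)}(h)$ with $a=a_n$: the claim $\int_{\snorm t\le a_n}|\chf_X(t)|^n\,\dd t=O(a_n^{-d})$ is false in the lattice case, where periodicity produces $\asymp a_n^d$ near-maxima of $|\chf_X|$.

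The paper repairs this by taking $a=1/s_0$ \emph{fixed} (with $s_0$ coming from a nondegeneracy lemma), so the prefactor in Lemma~\ref{l:lc} is just $\ll_h 1$. The factor $s^{-d}+a_n^{-d}$ then emerges from the \emph{integral}, not the prefactor: one splits $\{\snorm t\le 1/s_0\}$ into $\{|t|\le 1/s\}$, contributing volume $\ll s^{-d}$, and $\{1/s<|t|\le\sqrt d/s_0\}$, on which a symmetrization argument plus the second-moment asymptotic $m_X(L,\omega)\asymp L^2/A(L)$ (Lemma~\ref{l:2nd-moment}) gives $|\chf_{\tilde X^\lambda}(t)|^n\le\exp\{-cn/A(1/|t|)\}$; Potter's bound then integrates this to $\ll a_n^{-d}$. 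Because $a$ is fixed, no large-$|t|$ region ever enters, and the lattice issue does not arise. So the missing idea is simply: keep the Fourier window bounded and extract $s^{-d}+a_n^{-d}$ from the integral split, not from the prefactor.
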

The bound is a multivariate generalization of the local large
deviation bounds in \cite{denisov:08:ap, \Doney, chi:14:tr}.  Letting
$s=\infty$, the bound yields $Q_{S_n(X)}(h) \ll_h a^{-d}_n$, which can
also be derived from the LLT of Stone (cf.~Proposition \ref{p:llt}).
The case $n=0$ is included in Proposition \ref{p:local-LDP} only for
convenience, which follows by noting $\pr{S_0(X)\in x+h I_d} =
\cf{-x\in h I_d} \le \cf{|x|\le h\sqrt d}$.  Let $n\ge 1$ henceforth.
The proof is based on several lemmas which will be shown later.

\begin{lemma}\label{l:s0}
  There is $s_0>0$ such that
  \begin{align*}
    \inf_{\omega\in S^{d-1}}
    \mean[\ip \omega {X_1 - X_2}^2\cf{|X_1|\vee |X_2|\le s_0/(2\sqrt
      d)}] > 0.
  \end{align*}
\end{lemma}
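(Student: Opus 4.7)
The plan is to argue by contradiction, leveraging the compactness of $S^{d-1}$ together with the nondegeneracy of $F$. For $\omega\in S^{d-1}$ and $s>0$, I will set
\[
g(\omega,s) = \mean[\ip\omega{X_1-X_2}^2 \cf{|X_1|\vee|X_2|\le s/(2\sqrt d)}].
\]
Two easy observations drive the argument: first, $s\mapsto g(\omega,s)$ is nondecreasing, since the integrand is nonnegative and the truncation event expands with $s$; second, for each fixed $s$ the integrand is bounded by $s^2/d$ on the truncation event and is continuous in $\omega$, so dominated convergence makes $\omega\mapsto g(\omega,s)$ continuous on $S^{d-1}$.

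Next, assume no such $s_0$ works. Then for every integer $n\ge 1$ one can pick $\omega_n\in S^{d-1}$ with $g(\omega_n,n)<1/n$, and by compactness some subsequence converges to $\omega^*\in S^{d-1}$. For any fixed $s>0$, monotonicity in $s$ gives $g(\omega_n,s)\le g(\omega_n,n)<1/n$ for all sufficiently large $n$ on this subsequence, and continuity in $\omega$ then forces $g(\omega^*,s)=0$. Letting $s\toi$ and invoking monotone convergence yields $\mean[\ip{\omega^*}{X_1-X_2}^2]=0$, so $\ip{\omega^*}{X_1}=\ip{\omega^*}{X_2}$ almost surely. Since $X_1$ and $X_2$ are i.i.d.\ and independent, two i.i.d.\ real variables that coincide a.s.\ must be a.s.\ constant; hence $X_1$ lies a.s.\ in the affine hyperplane $\{x\in\ssp:\ip{\omega^*}{x}=c\}$ for some $c\in\Reals$, contradicting the nondegeneracy of $F$.

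The only step requiring any care is interchanging the limits in $n$ and $s$, which is clean thanks to monotonicity in $s$ and continuity of $g(\cdot,s)$ on compact $S^{d-1}$ for each fixed $s$; there is no real obstacle. A constructive alternative would be to cover $S^{d-1}$ by finitely many open sets on each of which some $g(\cdot,s_{\omega_i})>0$ (using pointwise positivity of $\lim_{s\toi}g(\omega,s)$ from nondegeneracy, together with continuity of $g(\cdot,s)$), then take $s_0=\max_i s_{\omega_i}$; the strict positivity of the infimum would then follow from continuity of $g(\cdot,s_0)$ on the compact $S^{d-1}$. Either route works, but the contradiction argument is the more transparent of the two.
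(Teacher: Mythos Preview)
Your proof is correct, and the constructive alternative you sketch at the end is precisely the paper's argument: for each $\omega$ pick $s(\omega)$ with $g(\omega,s(\omega))>0$, use continuity in $\omega$ to get a neighborhood where positivity persists, extract a finite subcover by compactness, and take $s_0=\max_i s(\omega_i)$. Your primary contradiction argument uses the same three ingredients (nondegeneracy, continuity in $\omega$ for fixed $s$, monotonicity in $s$) packaged differently, so the two routes are essentially equivalent.
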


Fix $s_0>0$ as in Lemma \ref{l:s0}.  Then for $\omega\in S^{d-1}$
and $s\ge s_0$,
\begin{align*}
  Z(s,\omega) := \mean[\exf{\ip \omega X/s}\cf{|X|\le s}]\in (0,e).
\end{align*}
Define the following probability measure concentrated in $s B_d$,
\begin{align*}
  G_{s,\omega}(\dd x)=Z(s,\omega)^{-1} \exf{\ip \omega x/s}
  \cf{|x|\le s} F(\dd x).
\end{align*}
Let $x = r v$, where $r=|x|$ and $v\in S^{d-1}$.  Let $Y,\inum
Y$ be \iid $\sim G_{s,v}$.  Then
\begin{align*}
  \pr{S_n(X) \in x + h I_d\AND M_n\le s}
  &=
  [Z(s,v)]^n
  \mean[\exf{-\ip v{S_n(Y)}/s} \cf{S_n(Y) \in x+ h I_d}].
\end{align*}

The function $Z(s,v)$ on the RHS has the following property.
\begin{lemma} \label{l:log-phi}
  $[\ln Z(s,\omega)]_+ \ll 1/A(s)$ for $s\ge s_0$ and $\omega\in
  S^{d-1}$.
\end{lemma}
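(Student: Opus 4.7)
The plan is to show $Z(s,\omega)\le 1+O(1/A(s))$ uniformly in $\omega\in S^{d-1}$, from which $[\ln Z(s,\omega)]_+\le \ln(1+O(1/A(s)))\ll 1/A(s)$ follows by $\ln(1+x)\le x$.

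First, on the event $\{|X|\le s\}$ we have $|\ip\omega X/s|\le 1$, and on $[-1,1]$ the elementary inequality $e^y\le 1+y+y^2$ holds. Integrating against $F$ and then discarding the nonnegative term $q_X(s)$ gives
\begin{align*}
  Z(s,\omega)
  \le 1 - q_X(s) + \ip\omega{c_X(s)}/s + m_X(s,\omega)/s^2
  \le 1 + \ip\omega{c_X(s)}/s + m_X(s,\omega)/s^2,
\end{align*}
so it suffices to bound each of the last two terms by $O(1/A(s))$ uniformly in $\omega$.

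For the quadratic term, Cauchy--Schwarz gives $\ip\omega X^2\le|X|^2$ for unit $\omega$, hence $m_X(s,\omega)\le V_X(s)$; and from $F\in\Cal D'(\alpha)$ we have $V_X\in\RV_{2-\alpha}$ together with $A(s)=[1+o(1)]s^2/V_X(s)$, so $m_X(s,\omega)/s^2\ll 1/A(s)$ uniformly in $\omega$. For the linear term, the centering equivalence \eqref{e:centering} says $(n/a_n)c_X(a_n)$ converges and is in particular bounded, so $|c_X(a_n)|/a_n\ll 1/A(a_n)$. To pass to continuous $s\ge s_0$, pick $n$ with $a_n\le s<a_{n+1}$ and use
\begin{align*}
  |c_X(s)-c_X(a_n)|\le a_{n+1}\pr{a_n<|X|\le a_{n+1}}\ll a_n q_X(a_n)\ll a_n/A(a_n),
\end{align*}
which, combined with the regular variation of $a_n$ and $A$, yields $|c_X(s)|/s\ll 1/A(s)$. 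Then $|\ip\omega{c_X(s)}|/s\le|c_X(s)|/s\ll 1/A(s)$ uniformly over $\omega\in S^{d-1}$.

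Assembling these estimates gives $Z(s,\omega)\le 1+C/A(s)$ with $C=C(F,A)$, uniformly over $\omega\in S^{d-1}$ and $s\ge s_0$, after which taking logarithms and positive parts completes the proof. The main delicacy is the interpolation step: transferring the centering equivalence \eqref{e:centering} from the integer-indexed sequence $(a_n)$ to a continuous parameter $s$. Everything else---the $e^y\le 1+y+y^2$ bound and the control of $V_X$ via $V_X\in\RV_{2-\alpha}$---is an immediate consequence of the standard tail estimates for distributions in $\Cal D'(\alpha)$ recalled in Section \ref{ss:prelim}.
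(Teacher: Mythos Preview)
Your proof is correct and is essentially the paper's argument, just reordered: the paper applies $\ln x\le x-1$ first and then $|e^z-1-z|\le cz^2$, while you apply $e^y\le 1+y+y^2$ first and then $\ln(1+x)\le x$, which amounts to the same decomposition into a linear term $\ip\omega{c_X(s)}/s$ and a quadratic term $m_X(s,\omega)/s^2$. The one place you add detail is the interpolation from $a_n$ to continuous $s$ in the centering bound, which the paper simply asserts as a consequence of \eqref{e:centering}; your version of that step is fine.
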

Thus, there is a constant $C = C(F,A)>0$ such  that $Z(s, \omega)
\le e^{C/A(s)}$ for $s\ge s_0$ and $\omega\in S^{d-1}$.  On the
other hand, since $\ip v y - \ip v x \ge - |y-x| \ge -\sqrt d
h$ for $y\in x+h I_d$, for $s\ge s_0$,
\begin{align*}
  \exf{-\ip v{S_n(Y)}/s}\cf{S_n(Y) \in x +  h I_d}
  &\ll_h
  \exf{-\ip v x/s}\cf{S_n(Y)\in x+h I_d}
  \\
  &=
  \exf{-r/s}  \cf{S_n(Y) \in x + h I_d}.
\end{align*}
As a result,
\begin{align}
  \pr{S_n(X) \in x + h I_d\AND M_n\le s}
  &\le
  [Z(s,v)]^n \exf{-r/s} \pr{S_n(Y)\in x + h I_d}
  \nonumber \\
  &\le
  [Z(s,v)]^n \exf{-r/s} Q_{S_n(Y)}(h)
  \ll
  \exf{-r/s + C n/A(s)} Q_{S_n(Y)}(h).
  \label{e:Q1}
\end{align}
By Lemma \ref{l:lc}, if $W = Y_1 - Y_2$, then $\chf_W(t) =
|\chf_Y(t)|^2 >0$ and
\begin{align}
  Q_{S_n(Y)}(h)
  &\le
  c_d(s_0\vee h)^d \int_{\Norm t\le 1/s_0}
  |\chf_{S_n(Y)}(t)|\,\dd t
  \nonumber \\
  &\ll_h
  \int_{\Norm t\le 1/s_0} \chf_W(t)^{n/2}\,\dd t
  \ll_h
  s^{-d} +
  \int_{1/s<|t|\le \sqrt d/s_0} \chf_W(t)^{n/2}\,\dd t.
  \label{e:Q2}
\end{align}
By $x\le\exf{-(1-x)}$, $\chf_W(t) = \mean\cos\ip t W \le
\exf{-(1-\mean\cos\ip t W)/2}$.  By $1-\cos x\gg x^2$
for $|x|\le 1$,
\begin{align*}
  1-\mean\cos\ip t W
  &
  \ge
  \mean[(1-\cos \ip t W)\cf{|\ip t W|\le 1}]
  \\
  &\gg
  \mean[\ip t {Y_1 - Y_2}^2\cf{|\ip t{Y_i}|\le 1/2\AND i=1,2}]
  \\
  &=
  Z(s,v)^{-2} \mean[\ip t {X_1 - X_2}^2 \exf{\ip v {X_1 +
      X_2}/s}\cf{|\ip t{X_i}|\le 1/2\AND |X_i|\le s\AND i=1,2}].
\end{align*}
Given $t\in\ssp$ with $1/s<|t|\le \sqrt d/s_0$, let
$\omega=t/|t|$ and $L=|t|^{-1}/2$.  Then
\begin{align} \label{e:W}
  1-\mean\cos\ip t W
  &\gg
  |t|^2
  \mean[\ip \omega {X_1 - X_2}^2 \cf{|X_i|\le L\AND i=1,2}].
\end{align}
Since $X_1$ and $X_2$ are \iid $\sim X$,
\begin{align*}
  \mean[\ip \omega {X_1 - X_2}^2 \cf{|X_i|\le L}]
  =
  2 m_X(L,\omega) \pr{|X|\le L} -
  2 |\ip \omega {c_X(L)}|^2.
\end{align*}
Since $L\ge s_0/(2\sqrt d)$, by Lemma \ref{l:s0}, the infimum of the
LHS over $\omega\in S^{d-1}$ is positive.  In particular
$m_X(L,\omega)\ge \inf_{\omega\in S^{d-1}\AND s\ge s_0/(2\sqrt d)}
m_X(s,\omega)>0$.  Assume the following is true for now.

\begin{lemma} \label{l:2nd-moment}
  $m_X(s,\omega)\asymp s^2/A(s)$ for $s\ge s_0$ and $\omega\in
  S^{d-1}$.
\end{lemma}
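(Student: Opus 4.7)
The plan is to reduce the estimate to a matrix-theoretic question. Observe that $m_X(s,\omega) = \omega' M(s) \omega$ for the symmetric positive semi-definite matrix $M(s) := \mean[X X' \cf{|X| \le s}]$, whose trace equals $V_X(s) \asymp s^2/A(s)$. The upper bound $m_X(s,\omega) \le V_X(s)$ is then immediate. For the matching lower bound, I aim to show $M(s)/V_X(s) \to \Sigma$ entry-wise as $s \toi$ with $\Sigma$ symmetric positive definite; since $\mathrm{tr}(M(s)/V_X(s)) = 1$ keeps everything bounded, entry-wise convergence of this $d\times d$ matrix upgrades to uniform convergence of the quadratic form on the compact set $S^{d-1}$, giving $m_X(s, \omega) \ge (\lambda_{\min}(\Sigma)/2) V_X(s)$ for every $\omega$ once $s$ is large enough.

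For $\alpha = 2$, the entry-wise convergence and positive definiteness of $\Sigma$ are precisely the facts cited from \cite{rvaceva:62:stmsp} in Section \ref{ss:prelim}. For $\alpha \in (0, 2)$, I plan to invoke the multivariate domain-of-attraction characterization: there is a finite nontrivial measure $\sigma$ on $S^{d-1}$ such that $\pr{X/|X| \in B,\, |X| > r}/q_X(r) \to \sigma(B)/\sigma(S^{d-1})$ for every $\sigma$-continuity set $B$. Applying this with the bounded continuous test function $y \mapsto \ip \omega y^2$ yields $\tilde H_\omega(r) := \mean[\ip\omega{X/|X|}^2 \cf{|X|>r}] \sim C_\omega q_X(r)$, where $C_\omega := \sigma(S^{d-1})^{-1}\int \ip \omega y^2 \sigma(\dd y)$. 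An integration by parts gives $m_X(s,\omega) = -s^2 \tilde H_\omega(s) + 2\int_0^s r\, \tilde H_\omega(r)\, \dd r$, and since $q_X\in \RV_{-\alpha}$ with $\alpha<2$, Karamata's theorem delivers $m_X(s,\omega)/V_X(s) \to C_\omega$. Polarizing over the choices $\omega = e_i$ and $\omega = (e_i + e_j)/\sqrt 2$ converts this to entry-wise convergence $M(s)/V_X(s) \to \Sigma$ with $\Sigma$ a positive multiple of $\int y y' \sigma(\dd y)$.

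The main point to verify is positive definiteness of $\Sigma$ in the $\alpha \in (0,2)$ case. Here nondegeneracy of the limiting strictly stable law is equivalent to $\sigma$ not being supported in any great subsphere $\{y \in S^{d-1} : \ip \omega y = 0\}$, so $C_\omega > 0$ for every $\omega$; continuity of the quadratic form $\omega \mapsto C_\omega$ and compactness of $S^{d-1}$ then force a strictly positive infimum, so $\Sigma$ is positive definite and the lower bound follows. The principal technical care is in dovetailing the weak convergence on $S^{d-1}$ with the Karamata integration by parts, and in pinning down how nondegeneracy of $F$ transfers to the spherical component $\sigma$ of the limit's \levy measure -- both ingredients are standard but must be invoked correctly for the uniformity in $\omega$ to hold.
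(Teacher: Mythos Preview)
Your proposal is correct and follows essentially the same route as the paper. Both arguments dispose of $\alpha=2$ by citing \cite{rvaceva:62:stmsp}, and for $\alpha\in(0,2)$ both use the angular weak convergence $\pr{X/|X|\in\cdot\AND |X|>r}/q_X(r)\to\sigma(\cdot)/\sigma(S^{d-1})$ together with an integration-by-parts/Fubini identity to obtain $m_X(s,\omega)/V_X(s)\to C_\omega=\sigma(S^{d-1})^{-1}\int\ip\omega v^2\,\sigma(\dd v)$, then invoke nondegeneracy of the limiting law (Lemma 3.1 of \cite{rvaceva:62:stmsp}) to conclude $C_\omega>0$. The only cosmetic difference is in the uniformity step: the paper uses the Lipschitz estimate $|m_X(s,\omega_1)-m_X(s,\omega_2)|\le 2|\omega_1-\omega_2|V_X(s)$ plus compactness of $S^{d-1}$ to pass from pointwise to uniform lower bounds, whereas you package the same idea as entry-wise convergence of the $d\times d$ matrix $M(s)/V_X(s)$ to a positive-definite limit. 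These are equivalent formulations of the same equicontinuity argument.
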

It follows that $m_X(L,\omega) \pr{|X|\le L} \asymp |t|^{-2}/A(1/|t|)$
for $|t|\le \sqrt d/s_0$ with $L=|t|^{-1}/2$.  On the other hand, by 
\eqref{e:centering}, $|\ip \omega {c_X(L)}|^2 \le |c_X(L)|^2 \ll
|t|^{-2}/ A(1/|t|)^2$.  This combined with last two displays yields
that, for $1/s<|t|\le \sqrt d/s_0$, $1-\mean\cos\ip t W \gg 1/
A(1/|t|)$, and so for some $c>0$ that only depends on $\{F, A\}$,
$\chf_W(t)\le \exp\{-2c/A(1/|t|)\}$.   Then
\begin{align*}
  \int_{1/s<|t|\le \sqrt d/s_0} \chf_W(t)^{n/2}\,\dd t
  &\le
  \int_{1/s<|t|\le \sqrt d/s_0} \exp\Cbr{-\frac{c n}{A(1/|t|)}}\,\dd t
  \\
  &\ll
  \int_{1/s}^{\sqrt d/s_0}
  y^{d-1} \exp\Cbr{-\frac{c A(a_n)}{A(1/y)}}\,\dd y.
\end{align*}
By Potter's Theorem (\cite{\Bing}, Th.~1.5.6), $A(a_n)/A(1/y)
\gg (a_n y)^{\alpha/2}\wedge (a_n y)^{3\alpha/2}$ for $n\ge 1$ and
$y\le \sqrt d/s_0$.  Combining this with the above display and
$\exf{-(x\wedge y)} \le \exf{-x} + \exf{-y}$, there is $C=C(F,A)>0$
such that
\begin{align*}
  \int_{1/s<|t|\le \sqrt d/s_0} \chf_W(t)^{n/2}\,\dd t
  &\ll
  \int_{1/s}^{\sqrt d/s_0}
  y^{d-1} (\exf{-C(a_n y)^{\alpha/2}} + \exf{-C(a_n
    y)^{3\alpha/2}})\,\dd y.
\end{align*}
On the other hand, for any $b>0$ and $q>0$,
\begin{align*}
  \int_{1/s}^\infty y^{d-1} \exf{-b (a_n y)^q}\,\dd y
  = a_n^{-d} \int_{a_n/s}^\infty y^{d-1} \exf{-b y^q}\,\dd y
  \ll_{b,q} a_n^{-d}.
\end{align*}
The above three displays combined with \eqref{e:Q1} and \eqref{e:Q2}
then prove Proposition \ref{p:local-LDP}.

\begin{proof}[Proof of Lemma \ref{l:s0}]
  Put $\mu(\omega, s) = \mean[\ip \omega{X_1 - X_2}^2\cf{|X_1|\vee
    |X_2| \le s/2}]$.  Since $F\in \DA_0(\alpha)$ is
  nondegenerate, for each $\omega\in S^{d-1}$, $\mu(\omega,
  \infty)>0$, so by monotone convergence, there is $s(\omega)>0$ with
  $\mu(\omega, s(\omega))>0$.  Fixing $\omega$, by continuity of the
  mapping $v\to \mu(v, s(\omega))$, there is $r(\omega)>0$, such that
  $\mu(v, s(\omega))\ge \mu(\omega, s(\omega))/2$ for $v\in [\omega +
  r(\omega) B_d]\cap S^{d-1}$.  Since $S^{d-1}$ is compact, there are
  a finite number of $\omega_i\in S^{d-1}$, such that $S^{d-1}$ is
  covered by the union of $\omega_i + r(\omega_i) B_d$.  Then it is
  easy to see that $s_0 = \max_i s(\omega_i)$ has the asserted property.
\end{proof}

\begin{proof}[Proof of Lemma \ref{l:log-phi}]
  By $Z(s,\omega)=\mean[\exf{\ip \omega X/s}\cf{|X|\le s}]$
  and $\ln x \le x-1$ for $x>0$,
  \begin{align}
    \ln Z(s,\omega)
    &\le
    \mean[\exf{\ip \omega X/s}\cf{|X|\le s}]-1
    \nonumber\\
    &\le
    \mean[(\exf{\ip \omega X/s}-1)\cf{|X|\le s}]
    =
    I(s,\omega) + \ip \omega {s^{-1} c_X(s)},
    \label{e:phi-bound}
  \end{align}
  where $I(s,\omega) = \mean[(\exf{\ip \omega X/s}-1 - \ip \omega
  X/s)\cf{|X|\le s}]$.  By $|\exf z - 1 - z|\le c z^2$ for $|z|\le 1$,
  where $c>0$ is an  absolute constant, for $s\ge s_0$, $|I(s,
  \omega)| \ll s^{-2} m_X(s,\omega) \le s^{-2} V_X(s) \sim
  1/A(s)$.  On the other hand, from \eqref{e:centering},
  $\sup_\omega |\ip \omega {s^{-1} c_X(s)}| \ll 1/A(s)$.
  By \eqref{e:phi-bound}, the proof is complete.
\end{proof}

\begin{proof}[Proof of Lemma \ref{l:2nd-moment}]
  Since for all $s\ge s_0$ and $\omega\in S^{d-1}$, $0<m_X(s,
  \omega)\le V_X(s) \asymp s^2/A(s)$, it suffices to show that for
  $s\ge s_0$, $\inf_{\omega\in S^{d-1}} m_X(s,\omega)\gg s^2/A(s)$.
  For $u$, $v\in \ssp$, by $|\ip u X^2 - \ip v X^2| = |\ip {u - v} X
  \ip {u + v} X|\le |u-v| | u + v| |X|^2$,
  \begin{align*}
    |m_X(s,u) - m_X(s,v)| \le |u - v| |u + v| V_X(s).
  \end{align*}
  In particular, for $u, v\in S^{d-1}$, $|m_X(s, u) - m_X(s,v)|\le 2
  |u - v| V_X(s)$.  Then, by the compactness of $S^{d-1}$, it suffices
  to show that given $\omega\in S^{d-1}$, for $s\ge s_0$, $m_X(s,
  \omega)\gg s^2/A(s)$.  First, let $\alpha=2$.  Let $\Sigma$ be the
  covariance matrix of the limit normal distribution and put $b(u) =
  \ip u {\Sigma u}$.  By \cite{\Rva}, Th.~4.1, $m_X(s, u)/m_X(s, v)\to
  b(u)/b(v)$.  Letting $v = e_i$, it follows that
  \begin{align} \label{e:m-V-normal}
    m_X(s,u) / V_X(s) = m_X(s, u)/\sum_i m_X(s, e_i)\to b(u)/\sum_i
    b(e_i),
  \end{align}
  which together with
  \eqref{e:prelim-A} leads to the desired result.  Now let $\alpha\in
  (0,2)$.  Then
  \begin{align*}
    m_X(s,\omega)
    &=
    \int_{z\in [0,x]\AND v\in S^{d-1}} z^2\ip \omega v^2
    \pr{|X|\in \dd z\AND X/|X|\in\dd v}
    \\
    &=
    2\int_{z\in [0,s]\AND v\in S^{d-1}} \ip \omega v^2
    \Grp{\int_0^z x\,\dd x}
    \pr{|X|\in \dd z\AND X/|X|\in\dd v}
    \\
    &=
    2\int_{x\in [0,s]\AND v\in S^{d-1}}
    \ip \omega v^2 x \pr{x\le |X|\le s\AND X/|X|\in\dd v}\,\dd x
  \end{align*}
  Let $s\toi$.  By Th.~4.2 of \cite{\Rva} and Th.~14.10 of \cite
  {\Sato}, there is a finite nonzero measure $\gamma$ on $S^{d-1}$,
  such that for any measurable $E\subset S^{d-1}$, $\pr{|X|\ge s\AND
    X/|X| \in E} / q_X(s) \to \gamma(E)/\gamma(S^{d-1})$.  Then
  standard argument based on Riemann sum approximation to the integral
  over $v\in S^{d-1}$ yields
  \begin{align*}
    m_X(s,\omega)
    &=2\Sbr{
      \int \ip \omega v^2 \gamma(\dd v) + o(1)
    }
    \int_0^s x [q_X(x) - q_X(s)]\,\dd x
    \\
    &=
    \frac{[c+o(1)]s^2}{(2-\alpha) A(s)} \Sbr{
      \int \ip \omega v^2 \gamma(\dd v) + o(1)
    },
  \end{align*}
  where $c=c(F,A)>0$ is a constant.  Since the limiting stable law of
  $S_n(X)/a_n$ is nondegenerate, by Lemma 3.1 of \cite{\Rva}, $\int
  \ip \omega v^2 \gamma(\dd v)>0$.  Then the proof is complete.
\end{proof}

\section{Big-\emph{n} contribution}  \label{s:big-n}
This section proves Theorem \ref{t:big-n}.  The following LLT will be
used.
\begin{prop}[Stone \cite{\Stone}] \label{p:llt}
  Let $Y\in\Reals^r$ and $Z \in \Reals^{d-r}$ and
  constant vector $\beta\in\Reals^r$ be as in Proposition \ref
  {p:lattice-decomp}(3).  Let $\xi_i=(Y_i, Z_i)$, $i\ge 1$, be \iid
  $\sim (Y,Z)$.  Let $a_n\toi$ and $d_n = (b_n, c_n)\in \Reals^r
  \times \Reals^{d-r}$ such that $S_n(\xi)/a_n - d_n$ weakly converges
  to a stable law with density $\psi(y,z)$.  Denote $\Lambda_n =
  (n\beta + \Ints^r)\times\Reals^{d-r}$.  Then as $n\toi$,
  \begin{align*}
    \sup_{(y,z)\in \Lambda_n\AND h\le 1}
    \Abs{
      a^d_n \pr{S_n(Y) = y\AND S_n(Z) \in z+h I_{d-r}}
      - h^{d-r} \psi\Grp{\frac{y}{a_n}-b_n, \frac{z}{a_n}-c_n}
    }\to 0.
  \end{align*}
\end{prop}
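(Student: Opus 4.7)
The plan is to derive a mixed Fourier-inversion formula for the probability of interest via Lemma \ref{l:i-ft}, compare it with Fourier inversion for the stable density $\psi$, and control the resulting $L^1$ difference in Fourier space using the characteristic-function estimates developed in Sections \ref{s:levy-cf} and \ref{s:local-LDP}. For fixed $h\in(0,1]$, set
\begin{align*}
g_n(u,z) = \pr{S_n(Y) = n\beta + u\AND S_n(Z) \in z + h I_{d-r}},\quad u\in\Ints^r,\ z\in\Reals^{d-r}.
\end{align*}
A direct calculation gives
\begin{align*}
\ft g_n(s,t) = \chf_\xi(s,t)^n\,\exfn{\ip s{n\beta}}\,\psi_h(t),\qquad
\psi_h(t) := \prod_{j=1}^{d-r}\frac{1-\exfn{h t_j}}{\iunit t_j},\ |\psi_h|\le h^{d-r}.
\end{align*}
Integrability of $\ft g_n$ over $[-\pi,\pi]^r\times\Reals^{d-r}$ follows from the estimates in the proof of Proposition \ref{p:local-LDP} applied to the symmetrization $\xi_1-\xi_2$ (whose characteristic function is $|\chf_\xi|^2$). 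Lemma \ref{l:i-ft} then yields a Fourier-inversion formula for $g_n(y-n\beta,z)$; rescaling $(s,t)\mapsto(s/a_n,t/a_n)$ and using $\chf_\xi(s/a_n,t/a_n)^n=\chf_{S_n(\xi)/a_n-d_n}(s,t)\exfi{\ip s{b_n}+\ip t{c_n}}$ produces
\begin{align*}
a_n^d\,\pr{S_n(Y)=y\AND S_n(Z)\in z+h I_{d-r}}
= \nth{(2\pi)^d}\int_{D_n}\Phi_n(s,t)\,\exfn{\ip s(y/a_n-b_n)+\ip t(z/a_n-c_n)}\dd s\,\dd t,
\end{align*}
where $D_n=[-\pi a_n,\pi a_n]^r\times\Reals^{d-r}$ and $\Phi_n(s,t):=\chf_{S_n(\xi)/a_n-d_n}(s,t)\,\psi_h(t/a_n)$.

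Since $\psi$ is infinitely differentiable with Fourier transform of exponential decay \cite[Th.~14.10]{sato:99}, Fourier inversion for $\psi$ produces the analogous representation of $h^{d-r}\psi(y/a_n-b_n,z/a_n-c_n)$ with integrand $h^{d-r}\ft\psi(s,t)$ over all of $\Reals^d$. The proposition thus reduces to
\begin{align*}
\sup_{h\in(0,1]}\int_{\Reals^d}\Abs{\Phi_n(s,t)\cf{(s,t)\in D_n} - h^{d-r}\ft\psi(s,t)}\dd s\,\dd t \to 0.
\end{align*}
I would split the integration into three regions: (i) the compact $\{|(s,t)|\le R\}$, where \levy's continuity theorem gives $\chf_{S_n(\xi)/a_n-d_n}\to\ft\psi$ uniformly on compacts, and $\psi_h(t/a_n)\to h^{d-r}$ uniformly in $(s,t)$ and $h\le 1$, making the integrand vanish uniformly; (ii) the bounded-$t$ tail $\{R<|(s,t)|,\ |t|\le Ma_n\}$, where the $\ft\psi$ contribution is $<\rx$ for $R$ large, and $|\chf_\xi(s/a_n,t/a_n)|^n\le\exp[-cn/A(a_n/|(s,t)|)]\le\exp(-c'|(s,t)|^{\alpha/2})$ by Potter's theorem as in the proof of Proposition \ref{p:local-LDP}; (iii) the far tail $\{|t|>Ma_n\}$, where $|\psi_h(t/a_n)|\le\prod_j[(2a_n/|t_j|)\wedge h]$ supplies integrability in $|t|$ that must be combined with bounds on $\int|\chf_\xi(s/a_n,t/a_n)|^n\dd s$ over $s\in[-\pi a_n,\pi a_n]^r$.

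The main obstacle is uniform control in region (iii), since periodicity of $\chf_\xi$ in the lattice direction $s$ prevents any decay in $|s|$ alone. The needed estimate is obtained by converting the strict inequality $|\chf_\xi(s,t)|<1$ off the origin in $[-\pi,\pi]^r\times\Reals^{d-r}$ (Proposition \ref{p:lattice-decomp}) into a quantitative $|t|$-dependent decay rate, via Lemma \ref{l:lc} applied to $\xi_1-\xi_2$ along the lines of Section \ref{s:local-LDP}. Assembling the three regional bounds by the triangle inequality yields the required $L^1$ convergence, and uniformity in $h\le 1$ follows since $\psi_h$ enters the estimates only through $h^{d-r}$ on the compact region and through $|\psi_h|\le h^{d-r}$ in the tails.
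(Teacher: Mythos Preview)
Your overall Fourier-inversion strategy is the same as the paper's, but you omit the one ingredient that makes the argument work in full generality: Stone's smoothing. The paper does not work with $g_n$ directly; it introduces an auxiliary $\zeta\in\Reals^{d-r}$ with $\chf_\zeta(t)=f(t)=\cf{\snorm t\le 1}\prod_i(1-|t_i|)$, defines $V_{n,h,\rx}(y,z)=\pr{S_n(Y)=y\AND S_n(Z)+\rx\zeta\in z+hI_{d-r}}$, and only then applies Lemma~\ref{l:i-ft}. The compact support of $f(\rx t)$ confines the $t$-integral to $\snorm t\le 1/\rx$, so after rescaling the integration region becomes $a_nJ$ with $J=[-\pi,\pi]^r\times[-1/\rx,1/\rx]^{d-r}$ \emph{compact}. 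On $\{\theta\in a_nJ:\snorm\theta>\delta a_n\}$ compactness and Proposition~\ref{p:lattice-decomp} give a uniform bound $|\chf_\xi(\theta/a_n)|\le\rho_\delta<1$, hence $|\chf_\xi|^n\le\rho_\delta^n$ beats the polynomial volume. The limit $\rx\to0$ is taken last, following Lemma~2 of \cite{stone:65:ams}.

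Your region (iii) is exactly where this matters. The factor $|\psi_h(t/a_n)|\le\prod_j[(2a_n/|t_j|)\wedge h]$ is not integrable over $\Reals^{d-r}$ by itself, so you need decay from $|\chf_\xi(s/a_n,t/a_n)|^n$ for $|t/a_n|$ \emph{arbitrarily large}. But the estimate $1-\mean\cos\ip tW\gg 1/A(1/|t|)$ from Section~\ref{s:local-LDP} (your cited source) is derived only for $|t|\le\sqrt d/s_0$; it says nothing about $|t|\to\infty$. Proposition~\ref{p:lattice-decomp} gives the pointwise bound $|\chf_\xi(s,t)|<1$ for $t\ne0$, but without a Cram\'er-type hypothesis $\limsup_{|t|\to\infty}|\chf_Z(t)|<1$ there is no uniform $\rho<1$ on $\{|t|\ge M\}$---and no such hypothesis is assumed here ($Z$ may be singular continuous or discrete nonlattice). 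The same issue already undermines your claim that $\ft g_n\in L^1([-\pi,\pi]^r\times\Reals^{d-r})$, which is needed to invoke Lemma~\ref{l:i-ft} in the first place. The fix is to reinstate the smoothing by $\rx\zeta$; once the $t$-domain is compact, your regions (i) and (ii) handle everything and region (iii) disappears.
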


The proof consists of two steps.  First, Theorem \ref {t:big-n} is
proved assuming that no transform of $X$ is needed to reveal its
lattice-nonlattice composition.  That is, in \eqref {e:normalization},
one can let $T = \Id_d$ so that
\begin{align} \label{e:std-normal}
  X = (L, W, Z).
\end{align}
Then the general case is proved with some uniform convergence
argument.
\begin{proof}[Proof of Theorem \ref{t:big-n} under
  \eqref{e:std-normal}]
  In the following, $x$ and $s\omega$ with $s\ge 0$ and $\omega \in
  S^{d-1}$ will be used interchangeably.  Writing $x = (u,z)$ with
  $u\in \Reals^\nu$, for any $k\in\Nats$, $\Pseq n{x+\Delta_h}$ is
  equal to the sum of $\Pseq 
  n{x_a + \Delta_{h/k}}$ over $a\in \{0, 1, \ldots, k-1\}^{d-\nu}$,
  where $x_a = (u, z+ha/k)$.  Thus, if \eqref{e:big-n} holds for $h
  \in (0,1)$, then it holds for all $h>0$.  So without loss of
  generality, let $h\in (0,1)$.

  Fix an arbitrary $M>\delta\vee 1$.  Put $J_{s,\delta,M} = [A(\delta
  s)\AND A(Ms)) \cap\Nats$ and
  \begin{align*}
    B_{\delta,M}(x,h)
    =
    \sum_{n\in J_{s,\delta,M}} \Pseq n{x+\Delta_h}.
  \end{align*}
  Then $r_{\delta,h}(x)=s^d B_{\delta,M}(x,h)/A(s) + r_{M,h}(x)$.
  First, by Proposition \ref{p:llt}, $\Pseq n{x+\Delta_h}\ll
  a^{-d}_n$.  Then by $A^{-1}  \in\RV_{1/\alpha}$ and $A^{-1}(t) =
  [1+o(1)] a_n$ for $t\in [n-1,n+1]$ as
  $n\toi$,
  \begin{align*}
    r_{M,h}(x)
    &\ll
    \frac{s^d}{A(s)}\sum_{n\ge A(Ms)} a^{-d}_n
    \ll
     \frac{s^d}{A(s)}\int_{A(Ms)}^\infty \frac{\dd t}{(A^{-1}(t))^d}.
  \end{align*}
  By change of variable $t = A(s u)$ and $A'(x)= [\alpha+o(1)] A(x)/x$
  as $x\toi$,
  \begin{align*}
    \int_{A(Ms)}^\infty \frac{\dd t}{(A^{-1}(t))^d}
    \ll
    \int_M^\infty \frac{A(s u)\,\dd u}{s^d u^{d+1}}
    \ll
    \frac{A(s)}{s^d} \int_M^\infty u^{\alpha-d-1}\,\dd u.
  \end{align*}
  Since $d>\alpha$, the above two displays show that $r_{M,h}(x)\ll
  \int_M^\infty u^{\alpha-d-1}\,\dd u$ is arbitrarily small if $M$ is
  large enough.  Also, since $\psi$ is bounded, $\sup_{\omega\in
    S^{d-1}} \int_0^{1/M} \psi(u\omega) u^{d-\alpha-1} \,\dd u$ is
  arbitrarily small as well.  Therefore, to show \eqref{e:big-n}, it
  only remains to show
  \begin{align} \label{e:big-n2}
    \frac{s^d B_{\delta,M}(x,h)}{A(s)}
    = 
    \alpha q^{-1} h^{d-\nu} 
    \int_{1/M}^{1/\delta}\psi(u\omega) u^{d-\alpha-1}\,\dd u +
    o_{\delta,M,h}(1).
  \end{align}

  Let $x = (u,w,z)\in \Reals^\nu\times \Reals^{r-\nu}\times
  \Reals^{d-r}$.  Put $\Lambda = D^{-1} \Upsilon I_\nu$.  Then
  \begin{align*}
    \{S_n(X) \in x+\Delta_h\}
    =
    \{S_n(L)\in u + \Lambda\AND S_n(W)\in w  + h I_{r-\nu}\AND
    S_n(Z)\in z + h I_{d-r}\}.
  \end{align*}
  Let $l_0 = (0, \ldots, 0, \beta_\nu) = (0, \ldots, 0, p/q)$ and $w_0
  = (\beta_{\nu+1}, \ldots, \beta_r)$ be as in \eqref {e:lattice-l-w}.
  As $L\in l_0 + \Ints^\nu$, $S_n(L)\in u + \Lambda$ only if $(u +
  \Lambda) \cap (n l_0 + \Ints^\nu) \ne\emptyset$.  Clearly, $n l_0 +
  \Ints^\nu \subset \Ints l_0 + \Ints^\nu$.  Since $p$ and $q$ are
  coprime, $D(\Ints l_0 + \Ints^\nu) = \Ints^\nu$.  Then by
  $\Upsilon^{-1} \Ints^\nu = \Ints^\nu$,
  \begin{align*}
    (u+\Lambda) \cap (\Ints 
    l_0 + \Ints^\nu) = D^{-1}[(D u + \Upsilon I_\nu) \cap \Ints^\nu]
    = D^{-1} \Upsilon[(\Upsilon^{-1} D u + I_\nu) \cap \Ints^\nu]
  \end{align*}
  has exactly one element $l$ and there is a unique number among $0,
  \ldots, q-1$, denoted $\kappa(u)$, such that $l \in \kappa(u) l_0 +
  \Ints^\nu$.  Then $l\in n l_0 +  \Ints^\nu \Iff [n- \kappa(u)]
  l_0\in \Ints^\nu \Iff q\gv n-\kappa(u)$.  It follows that
  \begin{align} \label{e:LCD}
    (u+\Lambda) \cap (n l_0  + \Ints^\nu)\ne \emptyset
    \Iff
    l\in n l_0 + \Ints^\nu
    \Iff
    n\in \kappa(u) + q\Ints.
  \end{align}
  Thus $S_n(L)\in u  + \Lambda$ only if $n\in \kappa(u) + q\Ints$.
  Next, since $W \in w_0 + \Ints^{r-\nu}$, $S_n(W)\in w + h I_{r-\nu}$ 
  only if $(w + h I_{r-\nu}) \cap (n w_0 +\Ints^{r-\nu})\ne\emptyset$.
  As a result, $\Pseq n{x+\Delta_h}>0$ only if $n\in R_x$, where
  \begin{align*}
    R_x
    =
    \{n\in\Nats: n\in\kappa(u) + q\Ints\AND (w +
    h I_{r-\nu})\cap (n w_0 + \Ints^{r - \nu})\ne \emptyset\}.
  \end{align*}  
  Then by Proposition  \ref{p:llt}, as $s\toi$, for $n\ge A(\delta s)$, 
  \begin{align*}
    \Pseq n{x+\Delta_h}
    &=
    \sum_{\substack{y=(\tilde u, \tilde w):\,
        \tilde u\in (u+\Lambda) \cap (n l_0 + \Ints^\nu)
        \\
        \tilde w \in (w + h I_{r-\nu})\cap (nw_0 + \Ints^{r-\nu})
      }
    }
    \pr{S_n(Y) = y\AND S_n(Z)\in z + h I_{d-r}}
    \\
    &=
    a^{-d}_n h^{d-r} [\psi(x/a_n) + o_\delta(1)] \cf{n\in R_x},
  \end{align*}
  where the second line is due to the fact that as $h<1$, $w + h I_{r-
    \nu}$ contains at most one point in $n w_0 + \Ints^{r-\nu}$.  Let
  $m_0$ and $m_1$  be the first and last integers in $[A(\delta 
  s), A(Ms)+1)$.  For $s\gg_{\delta,M} 1$, $m_0\asymp_\delta A(s)$
  and $m_1-m_0 \asymp_{\delta,M} A(s)$.  Fix integers $m_0=N_1 < N_2 <
  \ldots < N_k < N_{k+1} = m_1$ with $k=k(s)$, such that as $s\toi$,
  \begin{align*}
    \min_i (N_{i+1}-N_i)\toi,
    \quad
    \min_i
    (N_{i+1}-N_i)\asymp \max_i (N_{i+1} -N_i) = o_{\delta,M}(A(s)).
  \end{align*}
  Then by $A^{-1}\in \RV_{1/\alpha}$, $\max_i
  (a_{N_{i+1}} - a_{N_i}) = o_{\delta,M}(s)$.  It follows that
  uniformly for $N_i\le n< N_{i+1}$,
  \begin{align} \label{e:a-approx}
    a_n = a_{N_i} (1+o_{\delta,M}(1))
  \end{align}
  and by the continuity of $\psi$ and $s/a_n = O_\delta(1)$,
  \begin{align} \label{e:a-approx2}
    \psi(x/a_n) =\psi(x/a_{N_i}) + o_{\delta,M}(1).
  \end{align}
  Combining the above displays, by $\psi$ being bounded,
  \begin{align*}
    \Pseq n{x+\Delta_h}
    =
    h^{d-r} a^{-d}_{N_i} \psi(x/a_{N_i})
    \cf{n\in R_x} + o_{\delta,M,h}(a_n^{-d}).
  \end{align*}
  Let $g_i(x)$ be the cardinality of $R_x \cap \{N_i, N_i+1, \ldots,
  N_{i+1}-1\}$.  Then  
  \begin{align} 
    B_{\delta,M}(x,h)
    &=
    \sum_{i=1}^N \sum_{N_i\le n<N_{i+1}} F^{*n}(x+\Delta_n)
    \nonumber \\\label{e:partial-sum}
    &=
    h^{d-r} \sum_{i=1}^k a_{N_i}^{-d} \psi(x/a_{N_i})
    g_i(x)
    + o_{\delta,M,h}(1) \sum_{n\in J_{x,\delta,M}}
    a^{-d}_n.
  \end{align}
  
  Since $0<h<1$, for $1\le i\le k$ and $N_i\le n<N_{i+1}$, 
  \begin{align*}
    (w+h I_{r-\nu})\cap (n w_0 + \Ints^{r-\nu})\ne\emptyset\Iff
    \exf{2\pi\iunit (n\beta_{\nu+j} - w_j)} \in \Gamma
    \text{ for every } 1\le j\le r-\nu,
  \end{align*}
  where $\Gamma$ is the arc $\{\exf{2\pi\iunit z}: 0\le z< h\}$ of the
  unit circle $S^1$.  Let
  \begin{align*}
    b_i = \lceil (N_i - \kappa(u))/q\rceil, 
    \quad
    c_i = \lceil (N_{i+1} - \kappa(u))/q\rceil -1.
  \end{align*}
  For $s\gg_{\delta,M} 1$, $N_i - \kappa(u) > A(\delta s) - q > 0$,
  so $b_i\ge 1$.  Therefore, by \eqref{e:LCD}, for $N_i\le n <
  N_{i+1}$, $n\in \kappa(u) + q\Ints\Iff n=\kappa(u) + q k$ with
  $b_i\le k\le c_i$.  Then
  \begin{align} 
    g_i(x)
    &=
    \sum_{k=0}^{c_i - b_i} \prod_{j=1}^{r-\nu}
    \cf{
      \exf{2\pi \iunit((\kappa(u) + (b_i+k)q) \beta_{\nu+j} - w_j)}
      \in\Gamma
    }
    =
    \sum_{k=0}^{c_i - b_i} \prod_{j=1}^{r-\nu}
    \cf{
      \theta_j \exf{2\pi \iunit k \tau_j}
      \in\Gamma
    }, \label{e:ergodic-grid}
  \end{align}
  where $\theta_j = \exf{2\pi\iunit((\kappa(u) + b_i q)\beta_{\nu+j} -
    w_j)}$ and $\tau_j = q\beta_{\nu+j}$.  Then $\theta :=
  (\eno\theta{r-\nu})\in \Bb K:=(S^1)^{r-\nu}$.
  Define
  \begin{align*}
    H z = (z_1 \exf{2\pi\iunit\tau_1}, \ldots, z_{r-\nu} \exf{2\pi
      \iunit \tau_{r-\nu}}), \quad 
    z\in\Coms^{r-\nu}.
  \end{align*}
  Under the Euclidean norm, $H$ is an isometry and $H\Bb K = \Bb K$.
  Since $\eno\tau {r-\nu}$ are rationally independent, for any
  $\theta\in \Bb K$, $\{H^n\theta\}_{n\ge 0}$ is dense in $\Bb K$
  (\cite {petersen:83:cup}, p.~158).  Then the pair   $(\Bb K,H)$ is
  strictly ergodic (\cite{petersen:83:cup}, Prop.~4.2.15), and the
  normalized Lebesgue measure on $\Bb K$ is the unique probability
  measure invariant under $H$.  By Prop.~4.2.8 of \cite
  {petersen:83:cup} followed by dominated convergence, as $n\toi$,
  \begin{align*}
    \nth n \sum_{k=0}^n 
    \prod_{j=1}^{r-\nu}
    \cf{
      \theta_j \exf{2\pi \iunit k \tau_j} \in\Gamma
    }
    =
    \nth n \sum_{k=0}^n 
    \cf{H^k\theta \in\Gamma^{r-\nu}} \to h^{r-\nu}
    \quad  \text{uniformly in $\theta\in \Bb K$.} 
  \end{align*}
 Then by \eqref{e:ergodic-grid}, as
  $s\toi$, for $1\le i\le s$,
  \begin{align*}
    g_i(x) = [1+o(1)](c_i-b_i) h^{r-\nu}
    = [1+o_{\delta,M}(1)]q^{-1} (N_{i+1} - N_i) h^{r-\nu},
  \end{align*}
  which together with \eqref{e:partial-sum} and another application of
  \eqref{e:a-approx} and \eqref{e:a-approx2} yields
  \begin{align*}
    B_{\delta,M}(x,h)
    &=
    [1+o_{\delta,M}(1)] q^{-1} h^{d-\nu}
    \sum_{i=1}^k a^{-d}_{N_i} \psi(x/a_{N_i}) (N_{i+1} - N_i)
    + o_{\delta,M,h}(1) \sum_{n\in J_{x,\delta,M}} a^{-d}_n
    \\
    &=
    q^{-1} h^{d-\nu} \sum_{n\in J_{x,\delta,M}} a^{-d}_n
    \psi(x/a_n)
    + o_{\delta,M,h}(1)\sum_{n\in J_{x,\delta,M}} a^{-d}_n.
  \end{align*}
  Since $[A^{-1}(t)]^{-d} \psi(x/A^{-1}(t)) = a^{-d}_n [\psi(x/a_n) +
  o_\delta(1)]$ for $n\ge A(\delta s)$ and $t\in [n-1,n+1]$, then
  \begin{align*}
    B_{\delta,M}(x,h)
    &=
    q^{-1} h^{d-\nu} \int_{A(\delta s)}^{A(Ms)}
    \frac{\psi(x/A^{-1}(t))}{(A^{-1}(t))^d}\,\dd t
    + o_{\delta,M,h}(1)\sum_{n\in J_{x,\delta,M}} a^{-d}_n.
  \end{align*}
  As $s\toi$, by change of variable $t=A(s/u)$, the integral on the
  RHS is
  \begin{align*}
    \int_{1/M}^{1/\delta}
    \frac{\psi(u\omega)}{(s/u)^d}
    \frac{s A'(s/u)}{u^2} \,\dd u
    &=
    [1+o_{\delta,M,h}(1)] \alpha
    \int_{1/M}^{1/\delta}
    \frac{\psi(u\omega)}{(s/u)^d}\frac{A(s/u)}{u}\,\dd u
    \\
    &=
    [1+o_{\delta,M,h}(1)] \frac{\alpha A(s)}{s^d}
    \int_{1/M}^{1/\delta} 
    \psi(u\omega) u^{d-\alpha-1}\,\dd u.
  \end{align*}
  Then \eqref{e:big-n2} follows by noting that
  \begin{align*}
    \sum_{n\in J_{x,\delta}} a^{-d}_n
    &=
    [1+o_{\delta,M}(1)]
    \int_{A(\delta s)}^{A(Ms)} \frac{\dd t}{(A^{-1}(t))^d}
    =
    [1+o_{\delta,M}(1)]\frac{\alpha A(s)}{s^d}
    \int_{1/M}^{1/\delta} u^{d-1-\alpha}\,\dd u.
    \qedhere
  \end{align*}
\end{proof}

For the general case, the following corollary will be used.
\begin{cor} \label{c:big-n2}
  Under the condition \eqref{e:std-normal}, given $\delta>0$ and
  $h>0$, as $s\toi$
  \begin{align} \label{e:big-n3}
    \sup_{\omega\in S^{d-1},\, \eta\ge \delta}
    |r_{\eta,h}(s\omega) - h^{d-\nu} \varrho_\eta(\omega)|
    =
    o_{\delta,h}(1).
  \end{align}
\end{cor}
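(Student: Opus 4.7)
The plan is to upgrade the pointwise (in $\eta$) uniform convergence over $\omega\in S^{d-1}$ supplied by Theorem \ref{t:big-n} into joint uniform convergence over $(\omega,\eta)\in S^{d-1}\times [\delta,\infty)$. The key structural fact I shall exploit is that both functions involved are monotone decreasing in $\eta$: the map $\eta\mapsto r_{\eta,h}(s\omega)$ is nonincreasing because the sum defining it starts at $n\ge A(\eta s)$ and $A$ is increasing, while $\eta\mapsto\varrho_\eta(\omega)=\alpha q^{-1}\int_0^{1/\eta}\psi(u\omega)u^{d-\alpha-1}\,\dd u$ is nonincreasing because its upper limit shrinks. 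So if $\eta_1\le\eta\le\eta_2$, then
\begin{align*}
  r_{\eta_2,h}(s\omega)-h^{d-\nu}\varrho_{\eta_1}(\omega)
  \le r_{\eta,h}(s\omega)-h^{d-\nu}\varrho_\eta(\omega)
  \le r_{\eta_1,h}(s\omega)-h^{d-\nu}\varrho_{\eta_2}(\omega),
\end{align*}
and each side is controlled by $|r_{\eta_i,h}(s\omega)-h^{d-\nu}\varrho_{\eta_i}(\omega)|$ plus $h^{d-\nu}|\varrho_{\eta_1}(\omega)-\varrho_{\eta_2}(\omega)|$.

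Next I would handle the tail $\eta\ge M$ for a large $M$ to be chosen. From the proof of Theorem \ref{t:big-n} one has $r_{M,h}(s\omega)\ll\int_M^\infty u^{\alpha-d-1}\,\dd u$ for $s\gg_M 1$, uniformly in $\omega$, and the same proof shows $\sup_\omega\varrho_M(\omega)\ll\int_0^{1/M}u^{d-\alpha-1}\,\dd u$ because $\psi$ is bounded and $d>\alpha$. Both bounds tend to $0$ as $M\toi$. Given $\rx>0$, pick $M=M(\rx)$ so that both quantities are at most $\rx$. Then, by monotonicity, for every $\eta\ge M$, $\omega\in S^{d-1}$, and $s$ large,
\begin{align*}
  0\le r_{\eta,h}(s\omega)\le r_{M,h}(s\omega)\le 2\rx, \qquad
  0\le h^{d-\nu}\varrho_\eta(\omega)\le h^{d-\nu}\varrho_M(\omega)\le h^{d-\nu}\rx,
\end{align*}
so the supremum over $\eta\ge M$ of $|r_{\eta,h}-h^{d-\nu}\varrho_\eta|$ is $O_h(\rx)$.

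For the compact range $\eta\in[\delta,M]$, I would use the uniform continuity of $\eta\mapsto\varrho_\eta(\omega)$ in $\omega$. Since $\psi$ is bounded, for $\delta\le\eta_1<\eta_2\le M$,
\begin{align*}
  \sup_\omega|\varrho_{\eta_1}(\omega)-\varrho_{\eta_2}(\omega)|
  \le \alpha q^{-1}\|\psi\|_\infty \int_{1/\eta_2}^{1/\eta_1}u^{d-\alpha-1}\,\dd u
  \ll_\delta \eta_2-\eta_1.
\end{align*}
Partition $[\delta,M]$ by a finite grid $\delta=\eta_0<\eta_1<\cdots<\eta_N=M$ with mesh so small that $h^{d-\nu}\sup_\omega|\varrho_{\eta_i}(\omega)-\varrho_{\eta_{i+1}}(\omega)|\le\rx$ for all $i$. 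Applying Theorem \ref{t:big-n} at each $\eta_i$ yields an $s_i$ beyond which $\sup_\omega|r_{\eta_i,h}(s\omega)-h^{d-\nu}\varrho_{\eta_i}(\omega)|\le\rx$. Taking $s\ge\max_i s_i$ (a finite max since $N<\infty$) and combining with the monotonicity sandwich from the first paragraph, I get $\sup_{\omega,\eta\in[\delta,M]}|r_{\eta,h}(s\omega)-h^{d-\nu}\varrho_\eta(\omega)|\le O_h(\rx)$. Together with the tail bound, this gives \eqref{e:big-n3}.

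The main obstacle, modest as it is, is simply the bookkeeping to verify that the grid mesh can be chosen in a way that only depends on $\{F,A,\delta,M,h\}$ and not on $\omega$; this is guaranteed by the uniform bound on $\psi$ and by $d>\alpha$ ensuring integrability of $u^{d-\alpha-1}$ near $0$. No new analytic input beyond Theorem \ref{t:big-n} and the monotonicity of $r_{\eta,h}$ and $\varrho_\eta$ is required.
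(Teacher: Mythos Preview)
Your proof is correct and follows essentially the same scheme as the paper: reduce to a compact range $[\delta,M]$ via the tail estimate from the proof of Theorem~\ref{t:big-n}, then interpolate from a finite grid in $[\delta,M]$ using Theorem~\ref{t:big-n} at the grid points. The one minor difference is that the paper bounds $|r_{\eta,h}(s\omega)-r_{\theta,h}(s\omega)|$ directly (via $F^{*n}(x+\Delta_h)\ll a_n^{-d}$ and the same integral computation as in the proof of Theorem~\ref{t:big-n}), whereas you exploit the monotonicity of $\eta\mapsto r_{\eta,h}$ to sandwich and need only control $|\varrho_{\eta_1}-\varrho_{\eta_2}|$; this is a small simplification but not a different route.
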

\begin{proof}
  From the preceding proof, it suffices to show
  that given $M>\delta$, as $s\toi$,
  \begin{align} \label{e:big-n4}
    \sup_{\omega\in S^{d-1},\, \delta\le\eta\le M}
    |r_{\eta,h}(s\omega) - h^{d-\nu} \varrho_\eta(\omega)|
    =
    o_{\delta,h}(1).
  \end{align}
  For $\delta\le \eta < \theta\le M$, $0\le\varrho_\eta(\omega) -
  \varrho_\theta(\omega)\ll \int^{1/\eta}_{1/\theta} u^{d-\alpha-1}
  \,\dd u\ll \eta^{\alpha-d} - \theta^{\alpha-d}$.  By $F^{*n}(x +
  \Delta_h)\ll a^{-d}_n$ and the bound at the end of the preceding
  proof, for $s\gg_\delta 1$,
  \begin{align*}
    0\le 
    r_{\eta,h}(s\omega) - r_{\theta,h}(s\omega)
    &\ll
    \frac{s^d}{A(s)}\sum_{A(\eta s)\le n \le A(\theta s)} a^{-d}_n
    \ll_\delta \eta^{\alpha-d} - \theta^{\alpha-d}.
  \end{align*}
  Thus \eqref{e:big-n4} holds if $\sup_{\omega \in S^{d-1},\eta\in E}
  |r_{\eta,h}(s\omega) - h^{d-\nu} \varrho_\eta(\omega)| = o_{\delta,
    h}(1)$, where $E$ is a finite set in $[\delta,M]$ with its
  adjacent elements being arbitrarily close.  Then Theorem \ref
  {t:big-n} under the additional condition \eqref{e:std-normal} can be
  invoked to finish the proof. 
\end{proof}

\begin{proof}[Proof of Theorem \ref{t:big-n}, general case]
  Suppose $TX = (L, W, Z)$.  Put $\tilde X = TX$.  Then $S_n(\tilde
  X)/a_n$ weakly converges to a stable law with density $\tilde
  \psi$.  For $x\ne 0$, put $s = |Tx|$ and $\omega = x/s$.  In general
  $\omega\not\in S^{d-1}$ and $|\omega|$ is a variable in 
  $x$.  However, $T\omega\in S^{d-1}$ and since $T$ is nonsingular,
  $|\omega|\ge \eta$ for some constant $\eta>0$.  Then $T^{-1} x =
  (T\omega) s$ and by $S_n(X) = T^{-1} S_n(\tilde X)$,
  \begin{align*}
    r_{\delta,h}(x)
    =
    \frac{|\omega|^d s^d}{A(|\omega|s)}
    \sum_{n\ge A(\delta|\omega| s)}
    \pr{S_n(\tilde X) \in (T\omega) s + \Delta_h}.
  \end{align*}
  Applying Corollary \ref{c:big-n2} to $S_n(\tilde X)$, as $s\toi$,
  the RHS is
  \begin{align*}
    \frac{|\omega|^d s^d}{A(|\omega|s)}
    \frac{A(s)}{s^d}
    \Sbr{h^{d-\nu} \alpha q^{-1} \int^{1/(\delta |\omega|)}_0
      \tilde\psi(u T\omega) u^{d-\alpha-1}\,\dd u + o_{\delta,h}(1)
    }.
  \end{align*}
  By change of variable $u = v/|\omega|$ and $\tilde\psi(x) =
  |\det T|^{-1} \psi(T^{-1}x)$, the above quantity is equal to
  $[1+o(1)] h^{d-\nu} \varrho_\delta(\omega/|\omega|) +
  o_{\delta,h}(1)$.  The proof is complete 
  by noting $\omega/|\omega| = x/|x|$.
\end{proof}

\section{Small-\emph{n} contribution} \label{s:small-n}
This section proves Theorem \ref{t:small-n}.  First some notation.
For $n\ge k\ge 1$, denote by $x_{n:1}$, \ldots, $x_{n:n}$ a
permutation of $\eno x n$ such that $|x_{n:i}|$ are sorted in
decreasing order and $S_{n:k}(x) = x_{n:1} + \cdots + x_{n:k}$.
Define $|x_{n:0}|=\infty$, $S_{n:0}(x)=0$, and $x_{n:k}=0$ for $k>n$.
Recall that according to \eqref{e:kappa}, $\kappa = \Flr{d/\alpha}$.

The proof follows from four lemmas.  For the first two, fix $\gamma\in
(d\alpha^{-1}(\kappa+1)^{-1},1)$, which is nonempty as $\kappa + 1>
d/\alpha$.  Define
\begin{align*}
  \zeta_{n,s} = a_n^{1-\gamma} s^\gamma, \quad n\ge 1, \ s>0.
\end{align*}

\begin{lemma} \label{l:small-n-many-large}
  Let $k=\kappa+1$ and $b = [d/(k\gamma) + \alpha]/2$.
  Note that $k\gamma b > d$ and $b\in (0,\alpha)$.  Given
  $\delta\in (0,1)$, for $s\gg_\delta 1$,
  \begin{align*}
    \sum_{n\le A(\delta s)} 
    \sup_{|x|\ge s}
    \pr{S_n(X)\in x + h I_d\AND |X_{n:k}|> \zeta_{n,|x|}}
    \ll_h
    \delta^{k\gamma b + \alpha - d} \frac{A(s)}{s^d}.
  \end{align*}
\end{lemma}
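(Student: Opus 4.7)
The plan is to perform a union bound over which $k$ indices carry the large summands, apply the concentration function estimate from Proposition \ref{p:local-LDP} to the remaining $n-k$ terms, and sum the result using a \emph{relative} Potter bound for $A$. By exchangeability of $X_1,\ldots,X_n$,
\begin{equation*}
  \pr{S_n(X)\in x+h I_d\AND |X_{n:k}|>\zeta_{n,|x|}}
  \le \binom{n}{k}\pr{|X_1|,\ldots,|X_k|>\zeta_{n,|x|}\AND S_n(X)\in x+h I_d}.
\end{equation*}
Conditioning on $X_1,\ldots,X_k$ and using independence together with $Q_{S_{n-k}(X)}(h)\ll_h a^{-d}_{n-k}\asymp a^{-d}_n$ from Proposition \ref{p:local-LDP}, this is $\ll_h \binom{n}{k} q_X(\zeta_{n,|x|})^k a^{-d}_n$ for $n\ge 2k$; the finitely many terms with $k\le n<2k$ contribute at most $O(q_X(s^\gamma)^k)=O(s^{-k\alpha\gamma})=o(A(s)/s^d)$, since $k\alpha\gamma>d$. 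As $\zeta_{n,|x|}=a_n^{1-\gamma}|x|^\gamma$ is increasing in $|x|$ and $q_X$ is decreasing, the supremum over $|x|\ge s$ is attained at $\zeta_{n,s}$, reducing the problem to bounding
\begin{equation*}
  \sum_{n\le A(\delta s)} n^k q_X(\zeta_{n,s})^k a^{-d}_n.
\end{equation*}

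The central trick is a Potter bound on $A$ \emph{relative} to $a_n$ rather than absolute. Since $n\le A(\delta s)$ implies $\zeta_{n,s}/a_n=(s/a_n)^\gamma\ge \delta^{-\gamma}\gg 1$, Potter's inequality for $A\in\RV_\alpha$ gives $A(\zeta_{n,s})\gg A(a_n)(\zeta_{n,s}/a_n)^b = n(\zeta_{n,s}/a_n)^b$, hence
\begin{equation*}
  q_X(\zeta_{n,s})\ll 1/A(\zeta_{n,s})\ll (a_n/\zeta_{n,s})^b/n.
\end{equation*}
Raising to the $k$th power and substituting $\zeta^{kb}_{n,s}=a^{kb(1-\gamma)}_n s^{kb\gamma}$ collapses the summand into $s^{-kb\gamma} a^{kb\gamma-d}_n$, with the $n^k$ from the union bound cancelled exactly by the $1/n^k$ coming out of the relative Potter bound.

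Finally, $kb\gamma>d$ by the choice of $b$, so changing variables $v=A^{-1}(n)$ exactly as in \eqref{e:L} gives
\begin{equation*}
  \sum_{n\le A(\delta s)} a^{kb\gamma-d}_n
  \ll \int_{a_1}^{\delta s} v^{kb\gamma-d-1} A(v)\,\dd v
  \ll (\delta s)^{kb\gamma-d} A(\delta s),
\end{equation*}
where the upper endpoint dominates because $A\in\RV_\alpha$ and $kb\gamma+\alpha-d>0$. Using $A(\delta s)\asymp\delta^\alpha A(s)$ and multiplying by $s^{-kb\gamma}$ yields the claimed bound $\delta^{kb\gamma+\alpha-d}A(s)/s^d$. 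The main obstacle is the relative Potter step: a naive $q_X(\zeta)\ll \zeta^{-b}$ leaves an uncancelled and unbounded factor $(A(s)/s^b)^k$ in the final estimate, so normalising against $A(a_n)=n$ is essential to obtain clean powers of $\delta$ and $s$.
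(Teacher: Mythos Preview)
Your proof is correct and follows essentially the same line as the paper's: a union bound to isolate the $k$ large summands, the concentration estimate $Q_{S_{n-k}(X)}(h)\ll_h a_n^{-d}$, the tail bound $q_X(\zeta_{n,s})\ll 1/A(\zeta_{n,s})$, Potter's theorem with exponent $b\in(0,\alpha)$, and Karamata-type summation. The only cosmetic difference is the order of operations: you apply the Potter bound in the form $A(\zeta_{n,s})\gg A(a_n)(\zeta_{n,s}/a_n)^b=n(s/a_n)^{\gamma b}$ directly on the summand, which makes the factor $n^k$ cancel before passing to an integral, whereas the paper first rewrites the sum as $\int_{a_1}^{\delta s}[A(u)/A(u^{1-\gamma}s^\gamma)]^k\,u^{-d-1}A(u)\,\dd u$ and applies Potter inside the integral; both routes land on the same power of $\delta$. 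One small remark: your claim $q_X(s^\gamma)^k=O(s^{-k\alpha\gamma})$ is only true up to a slowly varying factor, but since $k\gamma\alpha>d>d-\alpha$ the conclusion $o(A(s)/s^d)$ for the finitely many small-$n$ terms is unaffected.
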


\begin{lemma} \label{l:small-jump}
  Fix $k\ge 0$ and $\rx,\delta \in (0,1)$.  For $n\ge 1$ and
  $x\in\ssp$, denote
  \begin{align*}
    E_{n,x} = \{S_n(X)\in x + h I_d\AND |X_{n:k}|>\zeta_{n,|x|} \ge
    |X_{n:k+1}|\}.
  \end{align*}
  For $s\gg_{\rx, \delta} 1$,
  \begin{align*}
    \sum_{n\le A(\delta s)} 
    \sup_{|x|\ge s}\pr{E_{n,x}\AND  |S_{n:k}(X)|\le (1-\rx) |x|} 
    \ll_{h,k}
    \frac{A(\delta s)}{s^d}
    \int_{1/(2\delta)}^\infty u^{d-1} \exf{-\rx u^{1-\gamma}} \,\dd
    u.
  \end{align*}
  In particular, if $k\ge 1$,
  \begin{align*}
    \sum_{n\le A(\delta s)} \sup_{|x|\ge s}
    \pr{E_{n,x}\AND |X_{n:1}|\le (1-\rx) |x|/k} 
    \ll_{h,k}
    \frac{A(\delta s)}{s^d}
    \int_{1/(2\delta)}^\infty u^{d-1} \exf{-\rx u^{1-\gamma}} \,\dd
    u.
  \end{align*}
\end{lemma}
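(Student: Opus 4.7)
The plan is to decompose $E_{n,x}\cap\{|S_{n:k}(X)|\le(1-\rx)|x|\}$ by choosing which $k$ of the $n$ jumps exceed $\zeta:=\zeta_{n,|x|}$ in magnitude, apply Corollary~\ref{c:local-LDP} to the sum of the remaining truncated jumps, and then sum over $n$.  By permutation invariance,
\begin{align*}
  \pr{E_{n,x}\AND|S_{n:k}(X)|\le(1-\rx)|x|}
  =\binom{n}{k}\pr{C_{n,x}},
\end{align*}
where $C_{n,x}$ is the event that $|X_i|>\zeta$ for $i\le k$, $|X_j|\le\zeta$ for $k<j\le n$, $S_n(X)\in x+hI_d$, and $|S_k(X)|\le(1-\rx)|x|$.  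Conditioning on $X_1,\ldots,X_k$ and writing $y=x-S_k(X)$, I would apply Corollary~\ref{c:local-LDP} to the $n-k$ truncated i.i.d.\ copies to get
\begin{align*}
  \pr{C_{n,x}\mid X_1,\ldots,X_k}
  \ll_h(\zeta^{-d}+a_{n-k}^{-d})\,\exf{-\chi|y|/\zeta+Cn/A(\zeta)},
\end{align*}
valid for $s\gg 1$ since then $\zeta\ge 1$.  On the conditioning event, $|y|\ge\rx|x|$, so $|y|/\zeta\ge\rx(|x|/a_n)^{1-\gamma}=\rx u^{1-\gamma}$ where $u=|x|/a_n$.

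Next I would integrate out $X_1,\ldots,X_k$, each contributing a factor $q_X(\zeta)\asymp 1/A(\zeta)$, and combine with $\binom{n}{k}\le n^k$ to produce the polynomial factor $(n/A(\zeta))^k$.  For $u\ge 1/\delta$ (forced by $n\le A(\delta s)\le A(\delta|x|)$), Potter's bound applied to $A\in\RV_\alpha$ gives $A(\zeta)=A(a_n u^\gamma)\gg n\,u^{\gamma\alpha/2}$, so $n/A(\zeta)=O(u^{-\gamma\alpha/2})$ is bounded and $\exf{Cn/A(\zeta)}=O(1)$.  Since $\zeta\ge a_n$ and $a_{n-k}\asymp a_n$ for fixed $k$ and large $n$, the prefactor satisfies $\zeta^{-d}+a_{n-k}^{-d}\asymp a_n^{-d}$.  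The resulting bound is decreasing in $u$, so $\sup_{|x|\ge s}$ is attained at $u_0=s/a_n$:
\begin{align*}
  \sup_{|x|\ge s}\pr{E_{n,x}\AND|S_{n:k}(X)|\le(1-\rx)|x|}
  \ll_{h,k}a_n^{-d}\,u_0^{-k\gamma\alpha/2}\exf{-\chi\rx u_0^{1-\gamma}}.
\end{align*}

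Finally I would convert the sum into an integral.  Since $a_n=A^{-1}(n)\in\RV_{1/\alpha}$ and $A'(r)\asymp A(r)/r$, the change of variable $u=s/a_n$ yields $\dd n\asymp(A(s/u)/u)\,\dd u$, so
\begin{align*}
  \sum_{n\le A(\delta s)}a_n^{-d}\,u_0^{-k\gamma\alpha/2}\exf{-\chi\rx u_0^{1-\gamma}}
  \ll\frac{1}{s^d}\int_{1/\delta}^{\infty}A(s/u)\,u^{d-1-k\gamma\alpha/2}\exf{-\chi\rx u^{1-\gamma}}\,\dd u.
\end{align*}
Since $A(s/u)\le A(\delta s)$ for $u\ge 1/\delta$ and $u^{-k\gamma\alpha/2}\le 1$, and a rescaling $u\mapsto\chi^{1/(1-\gamma)}u$ converts the exponent $\chi\rx u^{1-\gamma}$ to $\rx u^{1-\gamma}$ while shifting the lower limit (which is absorbed by $1/(2\delta)$ and the $\ll_{h,k}$ constant), the claimed bound follows.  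The second assertion is immediate from $|S_{n:k}(X)|\le k|X_{n:1}|$: if $|X_{n:1}|\le(1-\rx)|x|/k$, then $|S_{n:k}(X)|\le(1-\rx)|x|$, so the first assertion applies.

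The main obstacle is the regular-variation bookkeeping: establishing the Potter-type bound $A(\zeta)\gg n\,u^{\gamma\alpha/2}$ uniformly in $n\le A(\delta s)$ and $u\ge 1/\delta$, tracking how the polynomial and exponential factors in $u$ propagate through the sup in $|x|$ and the subsequent sum in $n$, and ensuring that the local-LDP constant $\chi$ is absorbed cleanly so as to match the target integral with the exponent $\rx$ and lower limit $1/(2\delta)$.
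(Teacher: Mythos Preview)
Your approach is essentially the paper's: symmetrize to place the $k$ large jumps first, condition on them, apply the local large deviation bound to the remaining truncated sum, then convert the sum over $n$ into an integral via $u = s/a_n$. The one genuine gap is the final rescaling intended to remove $\chi$. After substituting $v = \chi^{1/(1-\gamma)} u$, your integral becomes
\[
  \chi^{-d/(1-\gamma)}\int_{\chi^{1/(1-\gamma)}/\delta}^\infty v^{d-1}\exf{-\rx v^{1-\gamma}}\,\dd v.
\]
If $\chi < 2^{\gamma-1}$ --- which occurs whenever $s_0 > 2^{1-\gamma}$, a possibility nothing in the setup excludes --- the new lower limit lies strictly below $1/(2\delta)$, and for small $\delta$ the ratio of this integral to the target $\int_{1/(2\delta)}^\infty v^{d-1}\exf{-\rx v^{1-\gamma}}\,\dd v$ grows like $\exp\bigl\{\rx\,\delta^{-(1-\gamma)}(2^{\gamma-1}-\chi)\bigr\}$. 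No constant depending only on $\{F,A,h,k\}$ can absorb this, so the step fails as written.

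The fix is simple: apply Proposition~\ref{p:local-LDP} rather than Corollary~\ref{c:local-LDP}. The proposition requires the truncation level to be at least $s_0$; here that level is $\zeta_{n,|x|} = a_n^{1-\gamma}|x|^\gamma \ge a_1^{1-\gamma} s^\gamma$, which exceeds $s_0$ for all $n\ge 1$ once $s\gg 1$ depending only on $\{F,A\}$. This yields the exponent $-\rx(|x|/a_n)^{1-\gamma}$ directly, with no $\chi$, and the rest of your argument goes through to give the stated bound. This is exactly what the paper does. As a side remark, your Potter step is unnecessary: since $\zeta_{n,|x|}\ge a_n$, one has $n/A(\zeta_{n,|x|})\le n/A(a_n)=1$ immediately, so both $\exf{Cn/A(\zeta)}$ and $(n\,q_X(\zeta))^k\ll (n/A(\zeta))^k$ are $O(1)$.
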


For the next two lemmas, define
\begin{align*}
  S'_n(X) =\sum_{i=1}^n X_i \cf{|X_i|> a_n}.
\end{align*}
\begin{lemma} \label{l:Poisson}
  Given $0<\delta<\rx<1$, for $s\gg_{\rx,\delta} 1$,
  \begin{align*}
    &
    \sum_{n\le A(\delta s)}\sup_{|x|\ge s}
    \pr{S_n(X)\in x + h I_d\AND
      |S_n(X) - S'_n(X)| \ge \rx|x|}
    \ll_h
    \frac{A(\delta s)}{s^d}
    \int_{1/(2\delta)}^\infty u^{d-1} \exf{-\rx u}\,\dd u.
  \end{align*}
\end{lemma}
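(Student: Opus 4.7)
The plan is to first establish the pointwise estimate
\begin{align*}
  \pr{S_n(X)\in x + h I_d\AND |S_n(X)-S'_n(X)|\ge \rx|x|}
  \ll_h a_n^{-d}\exf{-c\rx|x|/a_n}
\end{align*}
for $|x|\ge s$, $n\le A(\delta s)$ and $s\gg_{\delta,\rx} 1$ with an absolute constant $c>0$, and then sum over $n$ via a change of variables analogous to \eqref{e:L}.

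To prove the pointwise estimate, write $Y_i = X_i\cf{|X_i|\le a_n}$ so that $S_n(X)-S'_n(X) = S_n(Y)$. The event $|S_n(Y)|\ge \rx|x|$ implies $\sigma(S_n(Y))_j\ge \rx|x|/\sqrt d$ for some coordinate $j$ and sign $\sigma\in\{\pm 1\}$; a $2d$-fold union bound lets us fix one such $(j,\sigma)$. Exponential Markov with parameter $\theta = 1/a_n$ gives
\begin{align*}
  \pr{S_n(X)\in x+hI_d\AND \sigma(S_n(Y))_j\ge \rx|x|/\sqrt d}
  \le \exf{-\rx|x|/(\sqrt d\, a_n)}\mean\Sbr{\cf{S_n(X)\in x+hI_d}\prod_{i=1}^n \exf{\sigma(X_i)_j\cf{|X_i|\le a_n}/a_n}}.
\end{align*}
Since the $X_i$ are i.i.d., the expectation factorizes and equals $[Z(a_n,\sigma e_j)+q_X(a_n)]^n \pr{\hat S_n\in x+hI_d}$, where $\hat S_n$ is the sum of $n$ i.i.d.\ draws from the tilted probability measure
\begin{align*}
  \hat F(\dd x) = [Z(a_n,\sigma e_j)+q_X(a_n)]^{-1}\bigl[\cf{|x|\le a_n}\exf{\sigma(x)_j/a_n}+\cf{|x|>a_n}\bigr] F(\dd x).
\end{align*}
By Lemma \ref{l:log-phi} and $n q_X(a_n) = O(1)$ the prefactor is $O(1)$, so the pointwise estimate reduces to the claim $Q_{\hat S_n}(h)\ll_h a_n^{-d}$.

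Granting this tilted concentration bound, sum over $n\le A(\delta s)$ and change variables $u = s/a_n$ (noting $\dd n\asymp A(s/u)/u\,\dd u$ from $A'(s)\asymp A(s)/s$):
\begin{align*}
  \sum_{n\le A(\delta s)} a_n^{-d}\exf{-c\rx s/a_n}
  \ll s^{-d}\int_{1/\delta}^\infty u^{d-1} A(s/u)\exf{-c\rx u}\,\dd u
  \le \frac{A(\delta s)}{s^d}\int_{1/(2\delta)}^\infty u^{d-1}\exf{-c\rx u}\,\dd u,
\end{align*}
where we used $A(s/u)\le A(\delta s)$ for $u\ge 1/\delta$ and $1/\delta\ge 1/(2\delta)$. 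The main obstacle is verifying $Q_{\hat S_n}(h)\ll_h a_n^{-d}$ for the tilted law: one must re-run the Fourier computation behind Proposition \ref{p:local-LDP}, checking that the tilt factor $\exf{\sigma(x)_j/a_n}\in [e^{-1},e]$ on $\{|x|\le a_n\}$ preserves the second-moment lower bounds of Lemmas \ref{l:s0} and \ref{l:2nd-moment} up to absolute constants, while the bounded additive piece $\cf{|x|>a_n}F(\dd x)$ contributes only an $O(1)$ multiplicative perturbation to the characteristic function that does not destroy the lower bound $\gg 1/A(1/|t|)$ on $1-|\hat\chf(t)|^2$.
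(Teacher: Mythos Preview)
Your approach is correct but takes a different route from the paper's. You tilt the law of $X$ by the factor $g(x) = \exf{\sigma x_j \cf{|x|\le a_n}/a_n}$, extract the exponential decay via Markov, and then must re-establish the concentration bound $Q_{\hat S_n}(h) \ll_h a_n^{-d}$ for the tilted sum. Your sketch for this last step is sound: since $g/\mean[g(X)]$ is uniformly bounded in $[e^{-2}, e^2]$ (independently of $n$), the second-moment lower bounds behind Lemmas~\ref{l:s0} and~\ref{l:2nd-moment} survive up to absolute constants, and the Fourier computation of Section~\ref{s:local-LDP} goes through with uniform constants. Your phrasing about an ``additive piece'' is slightly off---the tail part is not an additive perturbation to the characteristic function but simply the region where $g=1$; the point is only that the Radon--Nikodym derivative is globally bounded.

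The paper instead conditions on $\tau_n = \#\{i\le n: |X_i|>a_n\}$. Given $\tau_n=m$, the pair $(S_n(X)-S'_n(X),\, S'_n(X))$ is distributed as $(S_{n-m}(b\Sp n),\, S_m(u\Sp n))$ with the two sums independent and $b\Sp n$ being $X$ conditioned on $|X|\le a_n$. After integrating out $S_m(u\Sp n)$, one is left with $\sup_{|x|\ge\rx s}\pr{S_{n-m}(b\Sp n)\in x+hI_d}$, to which Corollary~\ref{c:local-LDP} with truncation level $a_n$ applies directly, giving $\ll_h a_{n-m}^{-d}\exf{-c\rx s/a_n}$. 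Summing over $m$ against $\pr{\tau_n=m}\ll O(1)^m/m!$ recovers $a_n^{-d}$.

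The paper's decomposition is shorter because it reuses Corollary~\ref{c:local-LDP} off the shelf rather than re-deriving a concentration bound for a new, $n$-dependent tilted law. Your route is more self-contained---a single tilting rather than a sum over $\tau_n$---but requires the extra verification you flagged. Both arrive at the same pointwise estimate $\ll_h a_n^{-d}\exf{-c\rx s/a_n}$, after which the summation over $n$ is identical.
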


\begin{lemma} \label{l:bulk}
  Fix $0<\theta<\nu<1$ and $0< \eta < \rx < \nu - \theta$.
  For $s\gg_{\theta, \nu, \eta, \rx, h} 1$,
  \begin{align*}
    &
    \pr{
      S_n(X)\in s\omega + h I_d\AND
      |X_{n:1}|>\nu s \AND
      |S_n(X) - S'_n(X)|<\eta s
    }
    \ll_h
    n a^{-d}_n\sup_{|t|>\theta s} K(t,a_n, \rx/\theta, h).
  \end{align*}
\end{lemma}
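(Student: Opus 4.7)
The strategy is a union-bound reduction followed by the local large-deviation bound of Corollary \ref{c:local-LDP}, culminating in identification with the integral defining $K$. Since $|X_{n:1}|>\nu s$ implies $|X_j|>\nu s$ for some $j$, symmetry gives
\begin{align*}
  \text{LHS}
  \le
  n\int_{|y|>\nu s} F(\dd y)\,
  \pr{\tilde S \in s\omega-y+h I_d\AND |\tilde W|<\eta s},
\end{align*}
where $\tilde S = X_2+\cdots+X_n$ and $\tilde W=\sum_{i=2}^n X_i\cf{|X_i|\le a_n}$. For $s\gg_\nu 1$, $|y|>\nu s>a_n$, so $X_1$ is itself a ``big'' jump and $W=\tilde W$.

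To bound the inner probability, write $\tilde S=\tilde T+\tilde W$ with $\tilde T=\sum_{i\ge 2}X_i\cf{|X_i|>a_n}$, and condition on the random set $J=\{i\ge 2:|X_i|>a_n\}$. Given $J$, the sums $\tilde T$ and $\tilde W$ are independent, and $\tilde W$ is a sum of $|J^c|$ i.i.d.\ copies of $X$ conditioned on $|X|\le a_n$. Applying Corollary \ref{c:local-LDP} (with $n$ replaced by $|J^c|$ and size $a_n$) to the unconditioned sum and dividing by $\pr{|X|\le a_n}^{|J^c|}\asymp 1$ yields $\pr{\tilde W\in u+h I_d\gv J,\tilde T}\ll_h a_n^{-d}\exf{-\chi|u|/a_n}$. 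Dropping the $|\tilde W|<\eta s$ constraint for an upper bound and then integrating over $\tilde T$,
\begin{align*}
  \pr{\tilde S\in v+h I_d\AND|\tilde W|<\eta s}
  \ll_h a_n^{-d}\,\mean\exf{-\chi|v-\tilde T|/a_n}.
\end{align*}

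Substituting $v=s\omega-y$ and using the discretization-smoothing bound $F(\dd y)\ll_h h^{-d}F(y+h I_d)\,\dd y$ (valid because the integrand has Lipschitz constant $O(1/a_n)$ in $y$), then changing variable $z=s\omega-\tilde T-y$ and setting $t=s\omega-\tilde T$, the integral is recast as $\int_{|t-z|>\nu s}F(t-z+h I_d)\exf{-\chi|z|/a_n}\,\dd z$. Splitting by $|z|<(\rx/\theta)|t|$ vs.\ $|z|\ge(\rx/\theta)|t|$, the first piece is bounded by $K(t,a_n/\chi,\rx/\theta,h)$, and the second by $h^d\exf{-\chi\rx s/a_n}$, an absorbable term since $s/a_n\gg 1$. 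The event itself forces $|t|>\theta s$: from $|X_1|>\nu s$, $|\tilde W|<\eta s$, and $S_n\in s\omega+h I_d$ one derives $|t|\ge(\nu-\eta)s-h\sqrt d$, and $\eta<\rx<\nu-\theta$ ensures $(\nu-\eta)s>\theta s$ for $s$ large. Hence only realizations with $|t|>\theta s$ contribute, and the final bound is $\ll_h n a_n^{-d}\sup_{|t|>\theta s}K(t,a_n/\chi,\rx/\theta,h)$.

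The main obstacle is the identification step: converting the abstract upper bound $\mean\int\cdots F(\dd y)$ into the explicit sup of $K$ requires the discretization-smoothing, the change of variables, and careful use of the parameter constraints to guarantee $|t|>\theta s$ automatically on the event. A secondary, more routine issue is applying Corollary \ref{c:local-LDP} to the truncated distribution of $\tilde W$: since $\pr{|X|>a_n}\asymp 1/n$ gives $\pr{|X|\le a_n}^n\to e^{-1}\asymp 1$, the conditioning alters the local LDP bound only by a bounded multiplicative constant.
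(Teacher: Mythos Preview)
Your overall strategy is sound and close to the paper's, but there are two concrete errors that leave genuine gaps.

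\textbf{(1) The conditional concentration bound is not uniform in $J$.} Corollary~\ref{c:local-LDP} applied to a sum of $|J^c|$ truncated variables with truncation level $a_n$ gives
\[
  \pr{\tilde W\in u+h I_d\gv J}\ \ll_h\ (a_n^{-d}+a_{|J^c|}^{-d})\,\exf{-\chi|u|/a_n},
\]
and $a_{|J^c|}^{-d}$, not $a_n^{-d}$, dominates when $|J|$ is large. Your stated bound $\ll_h a_n^{-d}\exf{-\chi|u|/a_n}$ is therefore false for individual $J$. The repair is to carry $a_{n-1-|J|}^{-d}$ through, take the sup over $|t|>\theta s$ (so the remaining factor no longer depends on $J$), and only then average over $|J|=m$ using $\pr{|J|=m}\ll O(1)^m/m!$; one checks $\sum_m \pr{|J|=m}\,a_{n-1-m}^{-d}\ll a_n^{-d}$. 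This is exactly the $\tau_n$-summation the paper performs at the end of its proof, and it is not optional.

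\textbf{(2) Dropping $|\tilde W|<\eta s$ creates a remainder not allowed by the lemma.} After your change of variables, the split at $|z|=(\rx/\theta)|t|$ leaves a piece $\ll_h\exf{-\chi\rx s/a_n}$, which is \emph{not} in general dominated by $\sup_{|t|>\theta s} K(t,a_n/\chi,\rx/\theta,h)$, so the lemma as stated is not established. The fix is to retain the constraint: since $\tilde W\in t-X_1+h I_d$ on the event, $|\tilde W|<\eta s$ forces $|t-X_1|<\eta s+h\sqrt d$, hence $|z|<\eta s+O(h)<(\rx/\theta)|t|$ whenever $|t|>\theta s$ (using $\eta<\rx$). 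Then the whole integral sits inside the domain defining $K$ and no splitting or remainder is needed.

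A smaller point: the ``discretization--smoothing bound $F(\dd y)\ll_h h^{-d}F(y+h I_d)\,\dd y$'' is not a valid inequality between measures; what is true is an inequality between integrals of functions Lipschitz on scale $h$, obtained by covering with shifted $h$-cubes and averaging the shift. The paper avoids this altogether by discretizing the \emph{small-jump} integral $\int_{|y|<\eta s}F(t-y+h I_d)\,\pr{S_{n-m}(b^{(n)})\in\dd y}$ instead of $F$: the LDP already gives cube-local control of the small-jump measure, and the bounded domain $|y|<\eta s$ directly matches the restriction in $K$. Your dual choice (discretize $F$) works once (1) and (2) are fixed, but is slightly less direct.
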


\begin{proof}[Proof of Theorem \ref{t:small-n}]
  Since $K(t,a, \eta, h)$ is increasing in $\eta$, if \eqref
  {e:small-n-local} holds for some $\eta>0$, then it holds for all
  larger $\eta$ with everything else unchanged.  Therefore, to prove
  \eqref {e:small-n-local}, $\eta>0$ can be fixed as small as desired.
  Noting $0<\theta \kappa<1$, let $\eta < 1/(\theta\kappa)-1$.  Fix
  $\nu = \nu(\eta,\theta) \in ((1+\eta) \theta, 1/\kappa)$.  Then
  $\sup_\omega F^{*n}(s\omega + h I_d) \le \sum_{i=1}^5 C_{n,i}(s)$,
  where the supremum is taken over $\omega \in S^{d-1}$ and
  \begin{align*}
    C_{n,1}(s)
    &=
    \sup_\omega
    \pr{S_n(X)\in s\omega + h I_d\AND |X_{n:\kappa+1}|> \zeta_{n,s}},
    \\
    C_{n,2}(s)
    &=
    \sup_\omega
    \pr{S_n(X)\in s\omega + h I_d\AND |X_{n:1}| \le \zeta_{n,s}},
    \\
    C_{n,3}(s)
    &=
    \sup_\omega\pr{S_n(X)\in s\omega + h I_d\AND
      |X_{n:\kappa+1}| \le \zeta_{n,s}< |X_{n:1}|\le \nu s},
    \\
    C_{n,4}(s)
    &=
    \sup_\omega
    \pr{S_n(X)\in s\omega + h I_d\AND |S_n(X) - S'_n(X)|\ge 0.9\theta
      \eta s},
    \\
    C_{n,5}(s)
    &=
    \sup_\omega
    \pr{S_n(X)\in s\omega + h I_d\AND |X_{n:1}| > \nu s\AND
      |S_n(X) - S'_n(X)| < 0.9 \theta \eta s}.
  \end{align*}
  Put $\rx_n(s) = \sum_{i=1}^4 C_{n,i}(s)$.  Apply Lemma
  \ref{l:small-n-many-large} to bound $\sum_{n\le A(\delta s)}
  \sup_{r\ge s} C_{n,1}(r)$ and Lemma \ref{l:small-jump} with $k=0$ to
  bound $\sum_{n\le A(\delta s)} \sup_{r\ge s} C_{n,2}(r)$.  Fixing
  $\rx'>0$ such that $\nu\le (1-\rx')/\kappa$,
  \begin{align*}
    C_{n,3}(s)
    \le
    \sum^\kappa_{k=1} \sup_\omega\pr{S_n(X) \in s\omega + h I_d\AND
      |X_{n:k+1}|\le \zeta_{n,s}<|X_{n:k}|\AND
      |X_{n:1}|\le (1-\rx')s/k      
    }.
  \end{align*}
  Then apply Lemma \ref{l:small-jump} with $k\ge 1$ to bound $\sum_{n
    \le A(\delta s)} \sup_{r\ge s} C_{n,3}(r)$.  Letting $0<\delta <
  0.9\theta\eta$, apply Lemma \ref{l:Poisson} to bound $\sum_{n
    \le A(\delta s)} \sup_{r\ge s} C_{n,4}(r)$.  Together, these
  bounds yield \eqref {e:small-n-r}.  Finally, let $\tilde\eta = 0.9
  \theta \eta$ and $\tilde\rx = \theta\eta$.  Then $0 < \tilde\eta
  < \tilde\rx < \nu-\theta$, so by Lemma \ref{l:bulk}, for
  $s\gg_{\theta, \eta,\rx,h} 1$,
  \begin{align*}
    C_{n,5}(s)
    &= 
    \sup_\omega\pr{
      S_n(X)\in s\omega + h I_d\AND
      |X_{n:1}|>\nu s \AND
      |S_n(X) - S'_n(X)|<\tilde\eta s
    }
    \\
    &\ll_h
    n a^{-d}_n\sup_{|t|>\theta s} K(t,a_n, \tilde\rx/\theta, h),
  \end{align*}
  yielding the first term on the RHS of \eqref{e:small-n-local}.
\end{proof}

\begin{proof}[Proof of Lemma \ref{l:small-n-many-large}]
  Denote $f_n(x) = \pr{S_n(X)\in x + h I_d\AND |X_{n:k}|>
    \zeta_{n,|x|}}$.  Clearly, if $n<k$, then $f_n(s\omega)=0$.  For
  $n\ge k$, $s>0$ and $\omega\in S^{d-1}$,
  \begin{align*}
    f_n(s\omega)
    \le 
    n^k \pr{S_n(X) \in s\omega + h I_d\AND |X_{k:k}|>\zeta_{n,s}}.
  \end{align*}
  For any $z_i\in\ssp$, $\pr{S_n(X)\in s\omega+h I_d\gv X_i=z_i,\AND
    i\le k} = \pr{S_{n-k}(X)\in s\omega-S_k(z) + h I_d}$, which by
  Proposition \ref{p:llt} is $O_h(a^{-d}_{n-k})$.  Then
  \begin{align}
    \sum_{n\le A(\delta s)} \sup_{|x|\ge s} f_n(x)
    \ll_h
    \sum_{n\le A(\delta s)}
    a^{-d}_{n-k} n^k \pr{|X_{k:k}|> \zeta_{n,s}} 
    \ll
    \sum_{n\le A(\delta s)}
    a^{-d}_n n^k q_X(\zeta_{n,s})^k.
    \label{e:small-n-many-large}
  \end{align}
  For $s\gg_\delta 1$, since $q_X(s) \ll 1/A(s)$, 
  \begin{align*}
    \sum_{n\le A(\delta s)} 
    a^{-d}_n n^k q_X(\zeta_{n,s})^k
    &\ll
    \sum_{n\le A(\delta s)} 
    \frac{n^k}{a^d_n A(\zeta_{n,s})^k}
    \\
    &\asymp
    \int_1^{A(\delta s)} 
    \frac{t^k\,\dd t}{(A^{-1}(t))^d A(A^{-1}(t)^{1-\gamma}
      s^\gamma)^k}
    =
    \int_{a_1}^{\delta s} 
    \frac{A(u)^k A'(u)\,\dd u}{u^d A(u^{1-\gamma} s^\gamma)^k},
  \end{align*}
  where the last line is due to change of variable $t=A(u)$.  By
  $A'(u) \asymp A(u)/u$, for $s\gg_\delta 1$,
  \begin{align*}
    \sum_{n\le A(\delta s)} 
    a^{-d}_n n^k q_X(\zeta_{n,s})^k
    &\ll
    \int_{a_1}^{\delta s} 
    \frac{A(u)^{k+1}\,\dd u}
    {u^{d+1} A(u^{1-\gamma} s^\gamma)^k }
    \ll
    A(\delta s) 
    \int_{a_1}^{\delta s} 
    \nth{u^{d+1}}
    \Sbr{\frac{A(u)}{A(u^{1-\gamma} s^\gamma)}}^k\dd u.
  \end{align*}
  
  For $a_1\le u\le \delta s$, since $u<u^{1-\gamma} s^\gamma$ and
  $b\in (0,\alpha)$, by Potter's Theorem (\cite{\Bing}, Th.~1.5.6),
  $A(u)/A(u^{1-\gamma} s^\gamma) \ll [u/(u^{1-\gamma} s^\gamma)]^b =
  (u/s)^{b\gamma}$.  Then by $k\gamma b>d$,
  \begin{align*}
    \int_{a_1}^{\delta s} 
    \nth{u^{d+1}}
    \Sbr{\frac{A(u)}{A(u^{1-\gamma} s^\gamma)}}^k\dd u
    \ll
    \int_0^{\delta s}
    \frac{(u/s)^{k\gamma b}}{u^{d+1}}\,\dd u
    \ll
    \nth{s^d} \frac{\delta^{k\gamma b-d}}{k\gamma b-d}.
  \end{align*}
  By $A(\delta s) \asymp \delta^\alpha A(s)$ for $s\gg_\delta 1$, the
  above two displays together imply
  \begin{align*}
    \sum_{n\le A(\delta s)} a^{-d}_n n^k q_X(\zeta_{n,s})^k
    \ll
    \frac{A(s)}{s^d}
    \frac{\delta^{k\gamma b+\alpha-d}}{k\gamma b-d}.
  \end{align*}
  This combined with \eqref{e:small-n-many-large} finishes the proof.
\end{proof}

\begin{proof}[Proof of Lemma \ref{l:small-jump}]
  Fixing $k\ge 0$ and $\rx,\delta\in (0,1)$, put $f_n(x) =\pr{E_{n,x}
    \AND |S_{n:k}(X)|\le (1-\rx) |x|}$.  If $n<k$, then $E_{n,x} =
  \emptyset$ and so $f_n(x)=0$.  Let $n\ge k$.  Define
  \begin{align*}
    g_n(x)
    =
    \Pr{
      S_n(X)\in x + h I_d\AND |S_k(X)|\le (1-\rx)|x|\AND
      |X_i|>\zeta_{n,|x|} \ge |X_j|\AND i\le k<j
    }.
  \end{align*}
  Then $f_n(x) \le n^k g_n(x)$.  Let $Y_j = X_{k+j}$.  By $S_n(X) =
  S_k(X) + S_{n-k}(Y)$, for $s>0$ and $\omega\in S^{d-1}$, if
  $S_n(X)\in s\omega + h I_d$ and $|S_k(X)|\le (1-\rx)s$,  then
  $|S_{n-k}(Y)|\ge |S_n(X)| - |S_k(X)| \ge \rx s - h\sqrt d$, so
  \begin{align*}
    g_n(s\omega) 
    &\le 
    \Pr{
      \begin{array}{c}
        S_{n-k}(Y) \in s\omega - S_k(X) + h I_d\AND
        |S_{n-k}(Y)|\ge \rx s - h\sqrt d,\\
        \text{and}\ |X_{k:k}|> \zeta_{n,s} \ge |Y_{n-k:1}|
      \end{array}
    }.
  \end{align*}
  By conditioning on $X_i=z_i$, $i\le k$, with $|z_i|> \zeta_{n,s}$,
  \begin{align*}
    g_n(s\omega)
    \le
    \pr{|X_{k:k}| > \zeta_{n,s}} M_{n,s}
    = q_X(\zeta_{n,s})^k M_{n,s},
  \end{align*}
  where
  \begin{align*}
    M_{n,s}
    =
    \sup_{|y|\ge \rx s - h\sqrt d} \pr{S_{n-k}(Y)\in y + h I_d\AND
      |Y_{n-k:1}|\le \zeta_{n,s}}.
  \end{align*}
  By Proposition \ref{p:local-LDP}, there is $C>0$ that only depends
  on $\{F, A\}$, such that
  \begin{align*}
    M_{n,s}
    \ll_h
    (\zeta^{-d}_{n, s} + a^{-d}_{n-k})
    \exf{-\rx s/\zeta_{n,s} + C n/A(\zeta_{n,s})}
  \end{align*}
  for $s\gg_\rx 1$ and $n\ge k$.  For $n\le A(\delta s)$, as
  $\zeta_{n, s} = a_n^{1-\gamma} s^\gamma \ge a_n$, $M_{n,s} \ll_{h,k}
  a^{-d}_n \exf{-\rx (s/a_n)^{1-\gamma}}$.  As a result, $g_n(s\omega)
  \ll_{h,k} a^{-d}_n q_X(\zeta_{n,s})^k \exf{-\rx(s/a_n)^{1 -
      \gamma}}$, and hence
  \begin{align}
    f_n(s\omega)
    \le
    n^k g_n(s\omega)
    \ll_{h,k}
    a_n^{-d} n^k q_X(\zeta_{n,s})^k \exf{-\rx(s/a_n)^{1-\gamma}}
    \ll_{h,k}
    a_n^{-d} \exf{-\rx(s/a_n)^{1-\gamma}},
    \label{e:small-jump-LDP2}
  \end{align}
  where the last bound is due to $n q_X(\zeta_{n,s}) \ll
  A(a_n)/A(\zeta_{n,s}) \le 1$.  Take sum over $n\le A(\delta s)$.
  Since for $n\ge 1$ and $t\in [n,n+1]$, $a_n\le A^{-1}(t)\ll a_n$,
  then for $s\gg_\delta 1$,
  \begin{align*}
    \sum_{n\le A(\delta s)} \sup_{|x|\ge s} f_n(x)
    \ll_{h,k}
    \int_1^{A(2\delta s)} \nth{A^{-1}(t)^d}
    \exf{-\rx(s/A^{-1}(t))^{1-\gamma}}\,\dd t.
  \end{align*}
  By change of variable $t=A(s/u)$, or $u=s/A^{-1}(t)$, and use $A'(x)
  \asymp A(x)/x$ for $x>0$, the last integral is no greater than
  \begin{align*}
    \int_{1/(2\delta)}^\infty \nth{(s/u)^d} \exf{-\rx u^{1-\gamma}}
    A'(s/u) s u^{-2}\,\dd u
    &\ll
    s^{-d} \int_{1/(2\delta)}^\infty u^{d-1} \exf{-\rx u^{1-\gamma}}
    A(s/u)\,\dd u
    \\
    &\le
    s^{-d} A(2\delta s)
    \int_{1/(2\delta)}^\infty u^{d-1} \exf{-\rx u^{1-\gamma}}
    \,\dd u.
  \end{align*}
  Combining the above two displays then finishes the proof.
\end{proof}

To prove Lemmas \ref{l:Poisson} and \ref{l:bulk}, define
\begin{align*} 
  \tau_n =\sum_{i=1}^n \cf{|X_i|> a_n}.
\end{align*}
Then $\pr{\tau_n = m} = \binom{n}{m} q_X(a_n)^m [1-q_X(a_n)]^{n-m}$
and by $q_X(a_n)\ll 1/A(a_n)=1/n$,
\begin{align} \label{e:tau-bound}
  \pr{\tau_n = m}
  \le
  \frac{n! O(1/n)^m}{m!(n-m)!}
  \ll
  \frac{O(1)^m}{m!}.
\end{align}
Conditioning on $\tau_n = m$,
\begin{align*}
  (S_n(X) - S'_n(X), S'_n(X)) \sim (S_{n-m}(b\Sp n), S_m(u\Sp n)),
\end{align*}
where $\inum {b\Sp n}$, $\inum {u\Sp n}$ are independent, with
\begin{align*}
  \pr{b\Sp n_i\in \dd x}
  &=
  \begin{cases}
    \pr{X\in \dd x\gv |X|\le a_n} & \text{if } q_X(a_n)<1
    \\
    \delta_0(\dd x) & \text{else},
  \end{cases}
  \\
  \intertext{and}
  \pr{u\Sp n_i\in \dd x}
  &=
  \begin{cases}
    \pr{X\in \dd x\gv |X|>a_n} & \text{if } q_X(a_n)>0
    \\
    \delta_0(\dd x) & \text{else},
  \end{cases}
\end{align*}
where $\delta_0$ is the unit measure concentrated at 0.

\begin{proof}[Proof of Lemma \ref{l:Poisson}]
  Denote $f_n(x) = \pr{S_n(X)\in x + h I_d\AND |S_n(X) - S'_n(X)| \ge
    \rx |x|}$.  For $n\le A(\delta s)$ and $\omega\in S^{d-1}$,
  \begin{align}
    f_n(s\omega)
    =
    \sum_{m=0}^n
    \pr{S_{n-m}(b\Sp n) + S_m(u\Sp n)\in s\omega + h I_d\AND
      |S_{n-m}(b\Sp n)|\ge \rx s} \pr{\tau_n=m}.
    \label{e:Poisson}
  \end{align}
  For each $m\le n$,
  \begin{align}
    &
    \pr{S_{n-m}(b\Sp n) + S_m(u\Sp n)\in s\omega + h I_d\AND
      |S_{n-m}(b\Sp n)|\ge \rx s} 
    \nonumber \\
    &=
    \int \pr{S_{n-m}(b\Sp n) \in s\omega - z + h I_d\AND
      |S_{n-m}(b\Sp n)|\ge \rx s} \pr{S_m(u\Sp n)\in \dd z}
    \nonumber \\
    &\le
    \sup_{|x|\ge \rx s - h\sqrt d} \pr{S_{n-m}(b\Sp n)\in x + h I_d}.
    \label{e:Poisson2}
  \end{align}
  For $n$ with $q_X(a_n)<1$,
  \begin{align*}
    \pr{S_{n-m}(b\Sp n)\in x + h I_d}
    &=
    \frac{
      \pr{S_{n-m}(X)\in x + h I_d\AND |X_{n-m:1}|\le a_n}
    }{
      \pr{|X_{n-m:1}|\le a_n}
    }.
  \end{align*}
  Since $q_X(a_n) \ll 1/n$, for all $n\ge 1$ with $q_X(a_n) < 1$ and
  $1\le m\le n$, $\pr{|X_{n-m:1}| \le a_n} \ge [1-q_X(a_n)]^n \gg 1$.  
  Then
  \begin{align*}
    \pr{S_{n-m}(b\Sp n)\in x + h I_d}
    \ll
    \pr{S_{n-m}(X)\in x + h I_d\AND |X_{n-m:1}|\le a_n}.
  \end{align*}
  Let $s_0>0$ be as in Proposition \ref{p:local-LDP}.  Let $n_0$ be
  the largest $n$ with $a_n<s_0$.  For $n\le n_0$, if $|x|> n a_n + h
  \sqrt d$, then the RHS is 0.  Otherwise, as $|x| \ll 1$, the RHS is
  $O(a^{-d}_{n-m} e^{-|x|/a_n})$.  On the other hand, for $n>n_0$,
  since $a_n\ge s_0$, by applying Proposition \ref{p:local-LDP} to the
  RHS
  \begin{align} \label{e:S-n-m-a}
    \pr{S_{n-m}(b\Sp n)\in x + h I_d}
    \ll_h
    (a^{-d}_{n-m} + a^{-d}_n) \exf{-|x|/a_n + C (n-m)/A(a_n)}
    \ll_h
    a^{-d}_{n-m} \exf{-|x|/a_n}.
  \end{align}
  From the discussion for $n\le n_0$, it is seen the bound holds for
  all $n$ with $q_X(a_n)<1$.  For $n$ with $q_X(a_n)=1$, as $b\Sp
  n_i\equiv 0$, $\pr{S_{n-m}(b\Sp n) \in x + h I_d} = \cf{0\in x + h
    I_d} \le \cf{|x| \le h\sqrt d}$.  Since there are only a
  finite number of $n$ with $q_X(a_n)=1$, the bound in \eqref
  {e:S-n-m-a} still holds.  Combining the bound with \eqref
  {e:Poisson}--\eqref {e:Poisson2}, for $|x|\ge\rx s - h\sqrt d$,
  \begin{align*}
    f_n(s\omega)
    \ll_h
    \exf{-\rx s/a_n} \sum_{m=0}^n a^{-d}_{n-m} \pr{\tau_n=m}
  \end{align*}
  Since by \eqref{e:tau-bound},
  \begin{align*}
    \sum_{m=0}^n a^{-d}_{n-m} \pr{\tau_n=m}
    &\ll
    a^{-d}_n \sum_{m\le n/2} \pr{\tau_n=m}
    + a^{-d}_0 \sum_{n/2<m\le n} \frac{O(1)^m}{m!}
    \ll
    a^{-d}_n + \frac{O(1)^n}{\Flr{n/2}!} \ll a^{-d}_n,
  \end{align*}
  then,
  \begin{align*}
    \sum_{n\le A(\delta s)} \sup_{|x|\ge s}
    f_n(x)
    \ll_h
    \sum_{n\le A(\delta s)} a^{-d}_n \exf{-\rx s/a_n}
  \end{align*}
  The rest of the proof is similar to the argument that starts with
  \eqref{e:small-jump-LDP2} for Lemma \ref{l:small-jump}.
\end{proof}

\begin{proof}[Proof of Lemma \ref{l:bulk}]
  Put $f_n(s\omega) = \pr{S_n(X)\in s\omega + h I_d\AND |X_{n:1}| >
    \nu s \AND|S_n(X) - S'_n(X)|<\eta s}$.  For $s\gg_{\eta, h} 1$, if
  $S_n(X)\in s\omega + h I_d$ and $|S_n(X) - S'_n(X)|<\eta s$, then
  $S'_n(X)\ne 0$, yielding $\tau_n\ge 1$ and $|X_{n:1}| > a_n$.
  Therefore, if $q_X(a_n)=0$, then $f_n(s\omega)=0$ and the bound in
  Lemma \ref{l:bulk} trivially holds.  In the rest of the proof, let
  $q_X(a_n)>0$.  Then
  \begin{align}
    f_n(s\omega)
    \le
    \sum_{m=1}^n P_m(s\omega) \pr{\tau_n = m},
    \label{e:truncate}
  \end{align}
  where, with $n$ being fixed, for each $m=1,\ldots, n$,
  \begin{align*}
    P_m(s\omega)
    &=
    \pr{S_{n-m}(b\Sp n) + S_m(u\Sp n)\in s\omega + h I_d\AND
      |u\Sp n_{m:1}|>\nu s\AND
      |S_{n-m}(b\Sp n)| < \eta s
    }
    \\
    &=
    \int_{|y|<\eta s}
    \pr{S_m(u\Sp n)\in s\omega - y + h I_d\AND
      |u\Sp n_{m:1}|>\nu s} \pr{S_{n-m}(b\Sp n)\in\dd y}
    \\
    &\le
    m
    \int_{|y|<\eta s}
    \pr{S_m(u\Sp n)\in s\omega - y + h I_d\AND
      |u\Sp n_1|>\nu s} \pr{S_{n-m}(b\Sp n)\in\dd y}.
  \end{align*}
  Denote $T = s\omega - (u\Sp n_2 + \cdots + u\Sp n_m)$.  By
  independence of $T$ and $u\Sp n_1$, with the latter following the
  distribution of $X$ conditioned on $|X|>a_n$,
  \begin{align*}
    \pr{S_m(u\Sp n)\in s\omega - y + h I_d\AND |u\Sp n_1|>\nu s}
    &=
    \pr{X \in T - y + h I_d\AND |X|>\nu s\gv |X|> a_n}
    \\
    &\le
    \frac{\pr{X \in T - y + h I_d\AND |X|>\nu s}}{q_X(a_n)}.
  \end{align*}
  For $y$ with $|y|<\eta s$, if $X\in T-y + h I_d$ and $|X|>\nu
  s$, then $|T| \ge |X|-|y|-h\sqrt d > (\nu - \eta)s- h\sqrt d$.
  For $s\gg_{\eta, \nu, \theta, h} 1$, $(\nu-\eta) s - h\sqrt d> \theta
  s$ and hence
  \begin{align*}
    P_m(s\omega)
    &\le
    \frac{m}{q_X(a_n)} \int_{|y|<\eta s}
    \pr{X\in T - y + h I_d\AND |X|>\nu s}
    \pr{S_{n-m}(b\Sp n)\in \dd y}
    \\
    &\le
    \frac{m}{q_X(a_n)} \int_{|y|<\eta s}
    \pr{X\in T - y + h I_d\AND |T|>\theta s}
    \pr{S_{n-m}(b\Sp n)\in \dd y}
  \end{align*}
  Let
  \begin{align*}
    G_{n,m}(t,s) = 
    \int_{|y|<\eta s}
    F(t - y + h I_d)\,\pr{S_{n-m}(b\Sp n)\in \dd y}.
  \end{align*}
  Then by Fubini's theorem, the last inequality yields
  \begin{align}
    P_m(s\omega)
    &\le
    \frac{m}{q_X(a_n)} \int_{|t|>\theta s} 
    G_{n,m}(t,s) \pr{T\in \dd t}
    \le
    \frac{m}{q_X(a_n)}\sup_{|t|> \theta s} G_{n,m}(t,s).
    \label{e:conv-max}
  \end{align}
  
  Given $v\in h I_d$, $\eta s B_d$ is covered by disjoint cubes $z
  + h I_d$ with $z\in (v+h\Ints^d)\cap (\eta s+ h \sqrt d) B_d$.  If
  $y\in z + h I_d$, then $- y + h I_d \subset - z + h J_d$, where $J_d
  = (-1, 1)^d$.  As a result, for any $t\in\ssp$, 
  \begin{align*}
    G_{n,m}(t,s)
    &\le
    \sum_{z\in (v+h\Ints^d)\cap(\eta s + h\sqrt d) B_d}
    \int_{z + h I_d} F(t-y + h I_d) \pr{S_{n-m}(b\Sp n)\in \dd y}
    \nonumber \\
    &\le
    \sum_{z\in (v+h \Ints^d) \cap (\eta s + h\sqrt d) B_d}
    F(t - z + h J_d) \pr{S_{n-m}(b\Sp n)\in z + h I_d}.
  \end{align*}
  Then applying \eqref{e:S-n-m-a} to $\pr{S_{n-m}(b\Sp n)\in z + h
    I_d}$,
  \begin{align*}
    G_{n,m}(t,s)\ll_h
    a^{-d}_{n-m}
    \sum_{z\in (v+h\Ints^d)\cap (\eta s + h\sqrt d)B_d}
    F(t - z + h J_d) \exf{-|z|/a_n}.
  \end{align*}
  Let $u=z-v$.  Then $z\in (v + h \Ints^d) \cap(\eta s + h\sqrt d)B_d$
  implies $u\in (h \Ints^d)\cap (\eta s + 2 h\sqrt d) B_d$, yielding
  \begin{align*}
    G_{n,m}(t,s)\ll_h
    a^{-d}_{n-m}
    \sum_{u\in (h\Ints^d)\cap (\eta s + 2 h\sqrt d)B_d}
    F(t - u-v + h J_d) \exf{-|u+v|/a_n}.
  \end{align*}
  Take average over $v\in h I_d$.  By $z=u+v$ and Fubini's theorem,
  \begin{align*}
    G_{n,m}(t,s)
    &\ll_h
    a^{-d}_{n-m}
    \sum_{u\in (h\Ints^d)\cap (\eta s + 2h\sqrt d)B_d}
    \int_{z\in u+h I_d} F(t - z + h J_d)
    \exf{-|z|/a_n}\dd z
    \\
    &\le
    a^{-d}_{n-m}
    \int_{(\eta s + 3h\sqrt d)B_d}
    F(t - z + h J_d) \exf{-|z|/a_n}\, \dd z
    \\
    &\ll_h
    a^{-d}_{n-m}
    \int_{(\eta s + 4 h\sqrt d)B_d}
    F(t - z + h I_d) \exf{-|z|/a_n}\, \dd z.
  \end{align*}
  Now for $s\gg_{\eta,\rx, h} 1$, if $|t|>\theta s$ and $z\in (\eta s
  + 4 h\sqrt d) B_d$, then $|z|<(\rx/\theta) |t|$ and so the last
  integral is no greater than $K(t, a_n, \rx/\theta, h)$.  Combining
  the bound with \eqref{e:conv-max} and then with \eqref{e:truncate},
  \begin{align*}
    f_n(s\omega)
    \le
    \nth{q_X(a_n) }
    \sup_{|t|> \theta s} K(t, a_n, \rx/\theta, h)
    \sum_{m=1}^n m a^{-d}_{n-m} \pr{\tau_n = m}.
  \end{align*}
  Similar to the argument at the end of the proof of Lemma
  \ref{l:Poisson},
  \begin{align*}
    \sum_{m=1}^n m a^{-d}_{n-m} \pr{\tau_n = m}
    &\ll
    a^{-d}_n\sum_{1\le m\le n/2+1} m \pr{\tau_n = m}
    +
    \sum_{n/2+1<m\le n} m \pr{\tau_n = m}
    \\
    &\ll 
    a^{-d}_n \mean(\tau_n) + \sum_{n/2+1<m\le n}
    \frac{n! q_X(a_n)^m}{(m-1)!(n-m)!}.
    \\
    &\ll
    a_n^{-d} n q_X(a_n) + n q_X(a_n)
    \sum_{n/2<m\le n} \frac{O(1)^{m-1}}{(m-1)!}
    \\
    &\ll
    n q_X(a_n)\Grp{a^{-d}_n  + \frac{O(1)^n}{\Flr{n/2}!}}
    \ll n q_X(a_n) a^{-d}_n.
  \end{align*}
  Combining the above two displays, the proof is complete.
\end{proof}

\section{Proofs of other results on the SRT} \label{s:proof-props}

\begin{proof}[Proof of Theorem \ref{t:w-SRT}]
  Let $X\sim F$.  For $t\in\ssp$ with $|t|\gg_h 1$ and $a>0$,
  \begin{align*} 
    K(t, a, 1/3,h)
    &\le
    \int_{|z|\le |t|/3} \dd z \int \cf{x\in t-z+ h I_d} F(\dd x)
    \\
    &=
    \int_{|z|\le |t|/3}\dd z
    \int_{|x|>|t|/3} \cf{x\in t-z+ h I_d} F(\dd x)
    \\
    &\le
    \int_{|x|>|t|/3}  F(\dd x)
    \int \cf{x-t+z\in h I_d}\,\dd z
    = h^d q_X(|t|/3). 
  \end{align*}
  Then, for any $\theta>0$ and $\eta\le 1/3$, by
  $K(t, a, \eta,h)\le K(t,a,1/3,h)$,
  \begin{align} \label{e:K}
    \sum_{n\le A(\delta s)}
    n a^{-d}_n\sup_{|t|>\theta s} K(t, a_n, \eta, h)
    \ll q_X(s/3) \tilde A_d(\delta s),
  \end{align}
  where $\tilde A_d$ is defined in \eqref{e:L}.  Since $\alpha\in (0,
  2)$, the RHS is $O(\tilde A_d(\delta s)/A(s)) \asymp \delta^{2\alpha
    - d} A(s)/s^d$, and hence the proof follows from Theorem \ref
  {t:SRT}.
\end{proof}

\begin{proof}[Proof of Proposition \ref{p:normal}]
  The inequality in \eqref{e:K} still holds but now $q_X(s) =
  o(1/A(s))$ as $s\toi$ (cf.\ \eqref{e:prelim-q-V}).  By Theorem \ref
  {t:SRT}, it suffices to verify $q_X(s) \tilde A_d(s)
  =o(A(s)/s^d)$ in each case.  The value of $\delta$ is irrelevant.
  If $d=3$, then by $\alpha=2>d/2$, $q_X(s) \tilde A_3(s) \asymp
  q_X(s) A(s)^2 / s^3 = o(A(s)/s^3)$.  If $d=4$, then the proof
  directly follows from $\tilde A_4(s) \ll \int_1^s u^{-5} A(u)^2\,\dd
  u$ (cf.\ \eqref{e:L}).  Finally, if $d\ge 5$, then $\tilde
  A_d(\delta s) \le \tilde A_d(\infty) < \infty$ and $A(s)\asymp s^2$
  for $s\gg 1$.  So $q_X(s) = o(s^{2-d})$ implies $q_X(s) \tilde
  A_d(s) = o(A(s)/s^d)$. 
\end{proof}

\begin{proof}[Proof of Theorem \ref{t:bounded-ratio}]
  Since the integral in \eqref{e:bounded-ratio} is increasing in
  $\eta$, assume without loss of generality that $0<\eta<1$ in
  \eqref{e:bounded-ratio}.  Fix $\theta>0$.  For $s\gg 1$, $n\ge 1$,
  and $t$, $z\in\ssp$ with $|t|\ge \theta s$ and $|z|\le \eta|t|$,
  $|t-z|^d A(|t-z|) \asymp |t|^d A(|t|)$.  Then
  \begin{align*}
    K(t,a,\eta,h)
    &\ll
    \nth{|t|^d A(|t|)}\int_{|z|<\eta |t|} \phi(t - z)
    \exf{-|z|/a}\,\dd z
    \\
    &\ll
    \nth{|t|^d A(|t|)}\Sbr{
      \int_{|z|<\eta |t|} [\phi(t - z)-T]_+\,\dd z
      +
      T\int\exf{-|z|/a}\,\dd z
    }
    \\
    &\le
    o(1) \frac{A(|t|)}{|t|^d} + \frac{O(a^d)}{|t|^d A(|t|)},
  \end{align*}
  which combined with \eqref{e:L} yields \eqref{e:bounded-ratio-K}.
\end{proof}

\begin{proof}[Proof of Proposition \ref{p:radial}]
  Let $Y\in\Reals$ be independent of $\xi$ with $\chf_Y(\theta) =
  \exf{-C f_\alpha(\theta)}$.  As noted before Proposition \ref
  {p:radial}, $Y$ is strictly stable.  It actually has \levy density
  $c \cf{x>0} x^{-1-\alpha}$ for some constant $c>0$.  Let $X=Y\xi$.
  Then $q_X(s) = q_Y(s) \asymp s^{-\alpha}$ for $s\gg 1$. 
  For $\Gamma\subset S^{d-1}$, $\pr{|X| > s\AND X/|X|\in \Gamma} =
  \pr{Y>s} \pr{\xi\in \Gamma} + \pr{Y<-s} \pr{\xi\in -\Gamma}$, so by
  $\pr{Y<-s}\le \mean\exf{-s-Y} \ll \exf{-s}$ (\cite{\Sato},
  Th.~25.17)
  \begin{align*}
    \pr{|X|>s\AND X/|X|\in \Gamma}/q_X(s) \to \pr{\xi\in \Gamma},
    \quad 
    s\toi.
  \end{align*}
  Then $X$ is in the domain of attraction of a stable law with the
  same \levy measure as $G$, with $a_n = n^{1/\alpha}$ being
  norming constant (\cite{\Rva}, Th.~4.2).  By $Y\in \DA_0(\alpha)$
  and \eqref{e:centering}, $(n/a_n) c_X(a_n) = (n/a_n) c_Y(a_n)
  \mean\xi$ converges, so by \eqref{e:centering} again, $S_n(X)/a_n$
  weakly converges to a strictly stable law.  Since $G$ is strictly
  stable, if $\alpha\ne 1$, then the limiting law is $G$.  However, if
  $\alpha=1$, then the limiting law is $G(x-x_0)$, where $x_0$ need
  not be 0.  Let $g$ be the density of $Y$ and $\lambda$ be the
  density of $\xi$ with respect to the spherical measure $\sigma$ on
  $S^{d-1}$.   Then for
  $E\subset\ssp$,
  \begin{align*}
    \pr{X\in E}
    &=
    \int\cf{y u\in E} g(y)\lambda(u) \dd y\,\sigma(\dd u)
    \\
    &=
    \int_{r>0}\cf{ru\in E} [g(r)\lambda(u) + g(-r)\lambda(-u)]\dd r\,
    \sigma(\dd u).
  \end{align*}
  For $x=ru\ne 0$ with $r=|x|$, letting $h(x) = c[g(r) \lambda(u) +
  g(-r) \lambda(-u)]/r^{d-1}$ with $c>0$ a suitable constant, the last
  integral is equal to $\int \cf{x\in E} h(x)\,\dd x$, showing that
  $X$ has density $h$.  Since $\sup_u [r^d h(ru)/q_X(r)]\ll
  (\sup\lambda) r^{1+\alpha}[g(r) + g(-r)]\ll 1$, it is seen that the
  function $\phi(x)$ in Theorem \ref{t:bounded-ratio} is bounded and
  hence \eqref{e:uniform-SRT2} holds for the limiting law of
  $S_n(X)/a_n$.  For $\alpha\ne 1$, this completes the proof.  If
  $\alpha=1$, one can only conclude that \eqref{e:uniform-SRT2} holds
  for $G(x-x_0)$.  However, consider $X + x_0$, whose corresponding
  limiting law is $G$.  Since $|x|^d \pr{X + x_0\in x + h I_d} A(|x|)
  \ll |x_0|^d A(|x_0|) + |x-x_0|^d \pr{X\in x - x_0 + h I_d} A(|x -
  x_0|)$ is bounded, a repeat of argument shows that \eqref
  {e:uniform-SRT2} holds for $G$.
\end{proof}

\begin{proof}[Proof of Proposition \ref{p:bounded-ratio-ind}]
  It suffices to show that $X\sim F$ satisfies \eqref{e:small-n}.
  Part of the argument is similar to that in \cite{chi:14:tr}, so only 
  parts that are different will be shown in detail.  First, the
  support of $\nu$ is unbounded, otherwise $\mean\exf{|t X|} < \infty$
  for all $t$ and $F\in \DA_0(2)$ (\cite{\Sato}, Th.~25.17).  Let
  $\nu_1(\cdot) = \nu(\cdot\setminus B_d)$ and $\lambda = \nu -
  \nu_1$.  Let $\mu = \nu_1(\ssp)$.  Then $S_n(X) \sim S_{S_n(N)}(Z) +
  S_n(W) + n v$, where $N_i$, $Z_j$ and $W_k$ are independent with
  $N_i$ \iid $\sim \dpois(\mu)$, $Z_j$ \iid $\sim \nu_1/\mu$,
  and $W_k$ \iid., ID with \levy measure $\lambda$ and mean 0, and
  $v\in\ssp$ is a constant.  Fix $M>1\vee (4\mu)^{1/\alpha}$ and
  $\rx>0$.  Let $Y = Z + v/\mu$ and $G$ the distribution of $Y$.
  Let $V = W + v - N v/\mu$.  Then
  \begin{align*}
    \Pseq n{s\omega + h I_d}
    &=
    \pr{S_{S_n(N)}(Y) + S_n(V)\in s\omega + h I_d}
    \\
    &\le
    \sup_{|t|<\rx s} \pr{S_{S_n(N)}(Y) \in s\omega - t + h I_d}
    + \pr{|S_n(V)|\ge \rx s}
    \\
    &\le
    \sum_{k\le A(M\delta s)} 
    \pr{S_n(N)=k} \sup_{|y|>(1-\rx) s}
    G^{*k}(y + h I_d) + R_n(s) + R'_n(s),
  \end{align*}
  where $R_n(s) = \pr{S_n(N)>A(M\delta s)}$, $R'_n(s) =
  \pr{|S_n(V)|>\rx s}$.  It can be shown that
  \begin{align*}
    \sum_{n\le A(\delta s)} \Pseq n{s\omega + h I_d}
    \ll
    \sum_{k\le A(M\delta s)} \sup_{|y|>(1-\rx) s}
    G^{*k}(y + h I_d) +
    \sum_{n\le A(\delta s)} [R_n(s) + R'_n(s)].
  \end{align*}
  As in \cite{chi:14:tr}, $\sum_{n\le A(\delta s)} R_n(s) =
  o(A(s)/s^d)$ as $s\toi$.  Let $V=(\eno \xi d)$.  Then $R'_n(s) \le
  \sum_{j=1}^d R'_{n j}(s)$, where $R'_{n j}(s) = \pr{|S_n(\xi_j)|>\rx
    s/d}$.  Each $\xi_j\in\Reals$ has mean zero and $\mean \exf{t
    \xi_j}<\infty$ for all $t$.  Fix $b\in (0, \alpha\wedge 1)$.  For
  $s\gg_b 1/\delta$ and $n\le A(\delta s)$, if $1\le n\le s^b$, then
  \begin{align*}
    R'_{n j}(s)
    &=
    \pr{S_n(\xi_j)>\rx s/d} + \pr{-S_n(\xi_j)>\rx
      s/d}
    \\
    &\le
    [(\mean \exf{\xi_j})^n + (\mean \exf{-\xi_j})^n] \exf{-\rx s/d}
    =
    O(1)^n \exf{-\rx s/d}
    \ll \exf{-\rx s/2d}.
  \end{align*}
  If $s^b<n<A(\delta s)$, then, letting $\sigma_j^2 = \mean[\xi^2_j]$
  and $\sigma = \max \sigma_j$,
  \begin{align*}
    \rx s/(d\sigma\sqrt n)  
    \ge (\rx/\delta) A^{-1}(n)/(d\sigma\sqrt n) \ge \eta n^c
  \end{align*}
  for some constants $\eta>0$ and $0<c<1/6$.  By Cram\'er's large
  deviation (\cite{\Petr}, Th.~5.23), $R'_{n j}(s) \le
  \pr{|S_n(\xi_j)| / (\sigma_j \sqrt n) > \rx s/(d\sigma\sqrt n)} \ll
  1-\Phi(\eta n^c)\le 1-\Phi(\eta s^{bc})$, where $\Phi$ is the
  distribution function of $N(0,1)$.  It is then easy to get
  $\sum_{n\le A(\delta s)} R'_n(s) = o(A(s)/s^d)$.

  Since $N_n/n\conv D\mu$ and $S_n(V)/a_n\conv D 0$, by $S_n(X)/a_n
  \sim S_{N_n}(Y)/a_n + S_n(V)/a_n$, it can be seen that
  $\mu^{1/\alpha} Y$ is in the domain of attraction without centering
  of the same stable law as $X$.  By the assumption on $\phi_\nu(x)$,
  and Theorems \ref{t:bounded-ratio} and \ref{t:small-n},
  \begin{align*}
    \sum_{k\le A(M\delta s)}
    \sup_{|y|>(1-\rx) s} G^{*k}(y + h I_d)
    \ll_h \delta A_\nu(s)/s^d.      
  \end{align*}
  By following almost line by line the argument in \cite
  {feller:71:jws}, p.~572--573, $q_X(s) \sim 1/A_\nu(s)$.  Then
  $A(s) \sim A_\nu(s)$ and the proof is complete.
\end{proof}

\paragraph{Acknowledgment}  The author would like to thank two
referees and the AE for their careful reading of the paper and useful
suggestions.

\comment{
\begin{small}
  \bibliography{LimitTheorems,Levy,Books}
\end{small}
}

\begin{small}

\end{small}

\setcounter{section}{0}
\renewcommand{\thesection}{\Alph{section}}

\section*{Appendix}
\section{Proofs for the lattice-nonlattice composition}
\label{s:lattice} 
For a set $E$ in a Euclidean space, denote by $\lspan(E)$ the linear
subspace spanned by elements of $E$.  If $M$ is a matrix, denote by
$\cspace(M)$ the linear subspace spanned by the column vectors of $M$.
If the rank of $M$ is equal to its number of columns, then $M$ is said
to be of full column rank.

\begin{proof}[Proof of Proposition \ref{p:lattice-decomp}]
  Let $\Gamma = \Gamma_X$, where
  \begin{align} \label{e:Gamma-lattice}
    \begin{split}
      \Gamma_X
      &=
      \{v\in\ssp: \text{there is $a\in \Reals$ such that}\ 
      \ip v X\in a + \Ints \text{ a.s.}\}
      \\
      &=
      \{v\in\ssp: |\chf_X(2\pi v)|=1\}.
    \end{split}
  \end{align}
  As in the proof of \cite{\Spit}, T6.1, $\Gamma$ plays an important
  role.  The first step is to show that it is a lattice.  The first
  line in \eqref{e:Gamma-lattice} implies that $\Gamma$ is an additive
  subgroup of $\ssp$, so it suffices to show that 0 is not a cluster
  point of $\Gamma$.  Let $u_n\in\Gamma$ such that $u_n\to 0$.  Let
  $V_n = \lspan(u_i\AND i\ge n)$.  Since $\ssp\supset V_1\supset V_2
  \supset \ldots$, there is $k$, such that $V_k = V_{k+1} = \cdots$.
  Let $X_*$ be \iid $\sim X$ and $\xi = X - X_*$.  Then almost surely,
  $\ip{u_n}\xi\in \Ints$ for all $n$.  Since $\ip{u_n}\xi\to 0$, this
  implies $\ip{u_n} \xi=0$ for $n\ge k$ large enough.  But then
  $\xi\in V^\perp_n = V^\perp_k$.  By assumption, $\xi$ is not
  concentrated in any linear subspace of dimension $d-1$.  Then $V_k =
  \{0\}$, giving $u_k = u_{k+1} = \cdots = 0$, so 0 is not a cluster
  point of $\Gamma$.

  Let $V=\lspan(\Gamma)$ and $r=\dim(V)$.  Suppose $r\ge 1$.  By a
  fundamental theorem on lattices (cf.~\cite{tao:06:cup}, Lemma 3.4),
  $\Gamma = M\Ints^r$ for some $M\in\Reals^{d\times r}$ of rank $r$.
  Let $\eno v r$ be the column vectors of $M$ and $a=(\eno a r)$ such
  that $\ip{v_i} X\in a_i + \Ints$.  Then $X\in \Lambda = \{x\in\ssp:
  M'x \in a + \Ints^r\}$.  By $\pi_V(x) = H M' x$, where $H =
  M(M'M)^{-1}$, $\pi_V(X) = H M' X \in H(a +\Ints^r)$, so $\pi_V(X)$
  is lattice.  If $v \in \ssp \setminus V$, then $v\not\in\Gamma$,
  so $|\chf_X(2\pi v)|<1$.  Thus $V$ has the property stated in (1).
  To continue, assume the following result is true for now.
  \begin{lemma} \label{l:normalize}
    There is $K\in \Ints^{r\times r}$ with $\det K=\pm 1$ such that
    $K a = (0, \ldots, 0, z_\nu, z_{\nu+1}, \ldots, z_r)$, where
    $z_\nu\in [0,\infty)\cap \Rats$ and $z_{\nu+1}, \ldots, z_r\in
    (0,\infty)\setminus \Rats$ are rationally independent.
  \end{lemma}
  Let $Q\in \Reals^{d\times (d-r)}$ be of full column rank such that
  $Q' M = O$.   Define
  \begin{align} \label{e:lattice}
    T = \begin{pmatrix} K M' \\ Q'\end{pmatrix},\quad
    Y = K M'X, \quad
    Z = Q' X, \quad
    \beta_i = z_i - \Flr{z_i}, \ i=\nu, \ldots, r.
  \end{align}
  Then $TX = (Y, Z)$, $\beta_\nu\in \Rats\cap [0,1)$, and
  $\beta_{\nu+1}$, \ldots,  $\beta_r\in (0,1) \setminus \Rats$ are
  rationally independent.  Put $\beta=(0,\ldots, 0, \beta_\nu,
  \beta_{\nu+1}, \ldots, \beta_r)$.  Since $Y\in K (a + \Ints^r) =
  \beta + \Ints^r$, $T$ has the property stated in (3).

  To show $T$ has the property stated in (2), if $u = (k, 0)\in
  \Ints^r\times \{0\}$, then by $\ip u {T X} = \ip k Y\in 
  \ip k\beta + \Ints$, $|\chf_{T X}(2\pi u)|=1$.  Conversely, if
  $|\chf_{T X}(2\pi u)|=1$, then $|\chf_X(2\pi T' u)|=1$, so $T' u
  = M k\in\Gamma$ for some $k\in \Ints^r$.  Write $u = (w, v)$ with
  $w\in \Reals^r$.  By \eqref{e:lattice}, $H'T' u= H'(M K' w + Q v) =
  K' w$.  As the LHS is also $H'M k=k$, $K' w= k$, giving $w
  = (K')^{-1} k\in\Ints^r$.  On the other hand, $(\Id_d -M H')T' u =
  (\Id_d - M H') (M K' w + Q v) = Qv$ and the LHS is also $(\Id_d -
  MH')M k = 0$.  Thus $Q v=0$.  Since $Q$ is of full column rank,
  $v=0$ and hence $u = (w, 0)\in\Ints^r\times \{0\}$.

  So far it has been assumed that $r=\dim(V)>0$.  If $r=0$, then
  $\Gamma = V = \{0\}$.  Consequently, $|\chf_X(2\pi v)|<1$ for $v\ne
  0$ and $T = \Id_d$ has the property stated in (2)--(3).
  
  To show that $V$ is unique, let $W$ be a linear subspace such
  that $\pi_W(X)$ is lattice and $|\chf_X(2\pi v)|<1$ for $v\not\in
  W$.  By definition, $\Gamma\subset W$, so $V = \lspan(\Gamma)
  \subset W$.  If $V\ne W$, then $W\cap V^\perp\ne\emptyset$ and
  $\pi_{W\cap V^\perp}(X) = \pi_{W\cap V^\perp} (\pi_W(X))$ is
  lattice.  It follows that there is $0\ne u\in W\cap V^\perp$, such
  that $\ip u X = \ip u {\pi_{W\cap V^\perp}(X)} \in c + \Ints$ for
  some $c$.  But then $u\in\Gamma \subset V$.  The contradiction shows
  $V=W$ and hence the uniqueness of $V$.

  To show that $\nu$, $r$, and $q$ are unique, suppose $0\le \mu\le
  s\le d$, $q_*\in \Nats$, and $B\in \Reals^{d\times d}$ is
  nonsingular, such that
  \begin{align} \label{e:lattice-2}
    |\chf_{B X}(2\pi u)|=1\Iff u\in\Ints^s \times \{0\},
  \end{align}
  and $B X = (Y_*, Z_*)$ with $Y_*\in \gamma + \Ints^s$, $Z^* \in
  \Reals^{d-s}$, and $\gamma = (0,\ldots, 0, \gamma_\mu, \gamma_{\mu+ 1},
  \ldots, \gamma_s)$, where $\gamma_\mu = p_*/q_*$ with $0\le p_* <
  q_*$ being coprime, and $\gamma_{\nu+1}, \ldots, \gamma_s\in (0,1)
  \setminus \Rats$ are rationally independent.  By $B' u\in\Gamma \Iff
  |\chf_{BX}(2\pi u)|=1$ and \eqref{e:lattice-2}, $\Gamma = B'(\Ints^s
  \times \{0\})$.  Since $B$ is nonsingular, a comparison of
  dimensions yields $s=\dim(V)=r$.
  
  Let $r\ge 1$, otherwise nothing remains to be shown.  Then by
  $\Gamma = T'(\Ints^r\times \{0\}) = B'(\Ints^r\times \{0\})$,
  $T'_1\Ints^r = B'_1\Ints^r$, where $T_1, B_1\in \Reals^{r\times 
    d}$ consist of the first $r$ rows of $T$ and $B$, respectively.
  Then there are $J, J_*\in\Ints^{r\times r}$ such that $T_1 = J B_1$
  and $J_* T_1 = B_1$, giving $J_* J B_1 = J_* T_1 = B_1$.  Since the
  rows of $B_1$ are linearly independent, $J_* J=\Id_r$.  Thus $J^{-1}
  = J_*$.  On the other hand, $B_1 X = Y_*$.  Then $T_1 X = J B_1 X =
  J Y_*\in J(\gamma + \Ints^r) = J\gamma + J\Ints^r = J\gamma +
  \Ints^r$.  Since $T_1 X = Y \in \beta + \Ints^r$, then  $\beta -
  J\gamma = (\eno b r) \in\Ints^r$.  Let $J=(g_{ij})$.  Each $\beta_i
  = c_i + g_{i,\mu+1} \gamma_{\mu+1} + \ldots + g_{ir}\gamma_r$ with
  $c_i = b_i + g_{i1} \gamma_1 + \ldots + g_{i\mu}\gamma_\mu  = b_i +
  g_{i\mu}\gamma_\mu\in \Ints$.  Since $\beta_i$, $i>\nu$, are
  rationally independent, this leads to $\mu\le \nu$.  Likewise,
  $\nu\le \mu$.  Thus $\mu = \nu$.  For $i\le \nu$, $g_{i,\nu+1}
  \gamma_{\nu+1} + \cdots + g_{ir} \gamma_r = \beta_i - c_i\in\Ints$.
  By rational independence of $\gamma_{\nu+1}$, \ldots, $\gamma_r$,
  $\beta_i = c_i$.  In particular, $\beta_\nu = k\gamma_\nu - l$ with
  $k = g_{\nu\nu}$ and $l = -b_\nu$.  Likewise, $\gamma_\nu = k_*
  \beta_\nu - l_*$ with $k_*, l_*\in\Ints$.  As a result $(kk_*-1)
  \beta_\nu = k(\gamma_\nu + l_*) - \beta_\nu = l + k l_*\in\Ints$.
  Since $\beta_\nu = p/q$ with $0\le p <q$ being coprime, $q\gv k k_*
  -1$, so $k_*$ and $q$ are coprime.  Then $\gamma_\nu =p_*/q$ with
  $p_* = k_* p - l_*q$ being coprime with $q$.  Thus $q_* = q$,
  completing the proof.
\end{proof}

\begin{proof}[Proof of Proposition \ref{p:aperiod}]
  (1) Let $K\xi = (\eno\zeta {\nu-1}, p + q \zeta_\nu)$, where
  $K\in\Ints^{\nu\times \nu}$ with $\det K=\pm 1$, $0\le p < q$ are
  coprime, and $\zeta = (\eno \zeta \nu)\in\Ints^\nu$ is strongly
  aperiodic.  By $\xi\in\Ints^\nu$, $\ip t\xi\in\Ints$ for
  $t\in\Ints^\nu$.  Conversely, if $\ip t \xi\in \Ints$, then
  letting $s= (K')^{-1} t$,  $\ip s {K\xi} = s_1\zeta_1 + \cdots +
  s_{\nu-1} \zeta_{\nu-1} + q s_\nu\zeta_\nu + p s_\nu\in \Ints$.  The
  strong aperiodicity of $\zeta$ implies $\eno s {\nu-1}, q s_\nu, p
  s_\nu\in\Ints$.  Since $p$ and $q$ are coprime, then $s_\nu\in
  \Ints$.  Thus $s\in \Ints^\nu$ and $t = K' s\in \Ints^\nu$.  This
  shows $\xi$ is aperiodic. 

  Conversely, let $\xi$ be aperiodic.  Define $\Gamma = \Gamma_\xi$ as
  in \eqref{e:Gamma-lattice}.  Then $\Gamma$ is a
  lattice.  Since $\Ints^\nu \subset \Gamma$, by Smith normal form
  (\cite {tao:06:cup}, Th.~3.7), there are linearly independent
  $\eno u\nu\in \Gamma$ and integers $1\le n_1 \le \cdots \le n_\nu$
  with $n_i\gv n_{i+1}$, such that, letting $M = (\eno u\nu)$ and $D =
  \diag(\eno n\nu)$,
  \begin{align} \label{e:aperiodic-lattice}
    \Gamma = M\Ints^\nu, \quad
    \Ints^\nu = M D \Ints^\nu.
  \end{align}
  By $u_i\in\Gamma$, $\ip {u_i} \xi \in s_i + \Ints$ for some $s_i$.
  In matrix form, $M'\xi \in s + \Ints^\nu$, where $s=(\eno s \nu)$.
  Define $K = D M'$, $Z = K\xi$, and $b = D s$.  From the second
  identity in \eqref {e:aperiodic-lattice}, $\Ints^\nu = K'\Ints^\nu$,
  giving $K, K^{-1}\in \Ints^{\nu\times \nu}$.  Then $Z\in\Ints^\nu$
  and is aperiodic.  Meanwhile, $Z = D M'\xi\in D(s + \Ints^\nu) = b +
  D \Ints^\nu$.  Then from $Z\in (b + D\Ints^\nu)\cap \Ints^\nu$ and
  $D \Ints^\nu\subset \Ints^\nu$, $b\in \Ints^\nu$.  Let $\inum Z$,
  $\inum {Z'}$ be \iid $\sim Z$.  For $m,n\ge 0$, $S_m(Z) - S_n(Z')\in
  \Ints b + D\Ints^\nu \subset \Ints^\nu$.  By aperiodicity of $Z$,
  for every standard base vector $e_i$ of $\Reals^\nu$, there are $m$
  and $n$ such that $\pr{S_m(Z) - S_n(Z') = e_i}>0$ (\cite{\Spit},
  p.~20).  This yields $e_i \in\Ints b + D \Ints^\nu$.  As a result,
  $\Ints b+ D\Ints^\nu = \Ints^\nu$.  Let $s_i\in\Ints$ and $v_i =
  (v_{i1}, \ldots, v_{i\nu})\in\Ints^\nu$, such that $b s_i + D v_i =
  e_i$.  Write $b = (\eno b\nu)$.  By comparing the coordinates,
  \begin{align*}
    b_i s_i + n_i v_{ii} = 1, \quad b_j s_i + n_j v_{ij}=0,\ j\ne i.
  \end{align*}
  Thus, each pair of $b_i$ and $n_i$ are coprime.  For $j>i$, as
  $n_j\ne 0$, $n_j\gv b_j s_i$, so $n_j \gv s_i$.  Then by  $b_i
  (s_i/n_j) n_j + n_i  v_{ii}=1$, $n_j$ and $n_i$ are coprime.  By
  $n_i\gv n_j$, this gives $n_i=1$.  As a result, $n_1 = \cdots =
  n_{\nu-1}=1$.  Put $q=n_\nu$ and let $0\le p<q$ such that $q\gv (b_\nu -
  p)$.  Let $\zeta = D^{-1}(Z - p e_\nu)$.  By $Z\in b + D\Ints^\nu$
  and $b- p e_\nu \in D\Ints^\nu$, $\zeta\in\Ints^\nu$.  If $\ip
  t\zeta\in s+\Ints$, where $t\in\Reals^\nu$ and $s\in\Reals$, then 
  $\ip {M t} \xi = \ip t {M'\xi} = \ip t {D^{-1} Z} \in c + \Ints$
  with $c = s + p\ip t {D^{-1} e_\nu}$.  Then $M t\in \Gamma
  = M \Ints^\nu$, so $t\in\Ints^\nu$.  Thus $\zeta$ is strongly
  aperiodic.  By $K\xi = Z = p e_\nu + D\zeta$, the proof is
  complete.

  (2) Let $\zeta = L - \beta_\nu e_\nu$.  Then $\zeta \in \Ints^\nu$.
  Since for $u\in\ssp$, $|\chf_{TX}(2\pi u)|=1\Iff u\in \Ints^\nu
  \times \{0\}$, then for $v\in \Reals^\nu$, $|\chf_\zeta(2\pi v)| =
  |\chf_L(2\pi v)|=1\Iff v\in\Ints^\nu$, so $\zeta$ is strongly
  aperiodic.  Then by (1), $D L= (\eno\zeta {\nu-1}, p + q
  \zeta_\nu)\in\Ints^\nu$ is aperiodic.
\end{proof}

\begin{proof}[Proof of Lemma \ref{l:normalize}]
  Recall $\Rats$, $\Reals$, and their quotient $\Reals/\Rats$ are
  vector spaces over the field $\Rats$,   Let $\bar a = (\eno{\bar a}
  r)$ with $\bar a_i = a_i + \Rats\in \Reals/\Rats$.  First, if $\bar
  a\ne 0$, then there are linearly independent $\eno{\bar u} s\in
  \Reals/\Rats$, $1\le s\le r$, such that $\bar a= A \bar u$, where $A
  \in \Rats^{r\times s}$ is of full column rank and $\bar u=(\eno{\bar
    u} s)$.  Equivalently, $a - A u\in \Rats^r$.  Note that $u_i$ are
  rationally independent.  By multiplying $A$ by a large $m\in\Nats$
  and dividing $u$ by $m$, $A$ can be assumed to be in $\Ints^{r\times
    s}$.  It is known that there are $P\in\Ints^{r\times r}$ and $R\in
  \Ints^{s\times s}$ with $|\det P| = |\det R|= 1$, such that $P A
  = \binom{D}{O} R$, where $D=\diag(\eno d s)$ with $d_i\in\Nats$ and
  $d_i\gv d_{i+1}$ (cf.\ \cite {jacobson:75b:sv-ny}, Th.~III.5).  Let
  $D R u = v$.  Then $P (a -A u) = P a - (v, 0)\in\Rats^r$, so $P a =
  (\tilde v, w)$, where $\tilde v = v + y$ for some $y\in \Rats^s$ and
  $w \in \Rats^{r-s}$.  The coordinates of $\tilde v$ are rationally
  independent.  On the other hand, similar to $A$, there is
  $M\in\Ints^{(r-s)\times (r-s)}$ with $\det M = \pm 1$, such that $M
  w  = (q, 0, \ldots, 0)\in \Rats^{r-s}$ with $q\ge 0$.  Then $K_0 =\!
  \binom{\!\!\Id_s{\ }}{\hphantom{\Id_s}M} P$ gives $K_0 a = (\tilde
  v, q, 0, \ldots, 0)$.  By permuting the coordinates, the lemma
  follows.  Finally, if $\bar a=0$, then $a\in \Rats^r$.  Following
  the treatment of the above $w$, the result follows.
\end{proof}

\section{Proofs regarding distributions in the domain of attraction}

\label{s:mv-stable}

\begin{proof}[Proof of \eqref{e:prelim-V} and \eqref{e:prelim-q-V}]
  Let $X\sim F\in \DA(\alpha)$.  If $\alpha=2$, then \eqref
  {e:prelim-q-V} is part of \cite{\Rva}, Th.~4.1.  For any $c>1$,
  $V_X(s) \le V_X(c s) \le V_X(s) + c^2 s^2 q_X(s)$, which by  \eqref
  {e:prelim-q-V} gives $V_X(c s)/V_X(s)\to 1$ as $s\toi$.  Then \eqref
  {e:prelim-V} follows.  If $\alpha\in (0,2)$, then Th.~4.2 of \cite
  {\Rva} states that $q_X\in \RV_{-\alpha}$, which leads to both
  \eqref {e:prelim-V} and \eqref{e:prelim-q-V} (cf.~\cite{\Bing},
  Th.~1.6.4).
\end{proof}

\begin{proof}[Proof of \eqref{e:prelim-WC}]
  For the univariate case, see \cite{\Bing}, p.~347.  For the
  multivariate case, first, let $\alpha\in (0,2)$.  The proof of 
  Th.~4.2 of \cite{\Rva} shows that a choice of $a_n$ is the infimum
  of all $s$ such that
  \begin{align*}
    \pr{|X|>s, X/|X|\in E} \le \gamma(E)/n \le \pr{|X|\ge s,
    X/|X|\in E},
  \end{align*}
  where $\gamma$ is a nonzero measure on $S^{d-1}$ and $E$ is any
  fixed subset of $S^{d-1}$ with $\gamma(E)>0$.  By Th.~14.10 of
  \cite{\Sato}, $\gamma$ is finite.  Letting $E = S^{d-1}$ and
  $c = \gamma(S^{d-1})$, it follows that $a_n$ can be any $s$
  satisfying $q_X(s) \le c/n \le q_X(s-)$.  Then by \eqref
  {e:prelim-q-V} and \eqref{e:prelim-A}, $a_n$ can (also) be taken to
  be any sequence such that $A(a_n)\sim cn$.

  Let $\alpha=2$ and $b(u) = \ip u {\Sigma u}$, where $\Sigma$ is the
  covariance matrix of the limiting normal distribution.  If $\mean
  |X|^2<\infty$, then \eqref{e:prelim-WC} follows from the Central
  Limit Theorem.  Suppose $\mean |X|^2 = \infty$.  By Th.~2.4 of
  \cite{\Rva}, $a_n$ can be any sequence such that for any $\rx>0$,
  (i) $n q_X(\rx a_n)\to 0$ and (ii) $(n/a^2_n) [m_V(\rx a_n,u) -
  \ip{c_V(\rx a_n)} u^2]\to b(u)$ for any $u\in S^{d-1}$.  Since
  $|c_V(s)|^2 = o(V_X(s))$ as $s\toi$ (\cite{\Rva}, (4.5)), by
  \eqref {e:prelim-V} and \eqref{e:m-V-normal}, (ii) is equivalent to
  $n/A(a_n) \to \sum_i b(e_i)$.  Once (ii) is satisfied, by \eqref
  {e:prelim-q-V}, (i) is satisfied.  Then the claim on $a_n$ follows.
\end{proof}

\begin{proof}[Proof of \eqref{e:centering}]
  For the univariate case, see \cite{\Bing}, p.~347.  For the
  multivariate case, according to the last comment on p.~190 in
  \cite{\Rva}, $b_n$ can be taken to be $(n/a_n) c_X(t a_n) + \gamma$,
  where $\gamma$ is any constant vector, and $t>0$ is any fixed number
  such that $\{|x|=t\}$ has measure 0 under the \levy measure of the
  limiting stable law.   From the characterization of the \levy
  measure (cf.\ \cite{\Rva}, (3.4)--(3.5)), $t$ can be any
  positive number.  It follows that any $b_n$ satisfying \eqref
  {e:prelim-WC} must be of the form $(n/a_n) c_X(a_n) + \gamma +
  \rx_n$ for some constant vector $\gamma$, where $\rx_n\to 0$ as
  $n\toi$.   This implies \eqref{e:centering}.
\end{proof}

\end{document}